\documentclass[11pt,letterpaper]{article}




 \usepackage{booktabs}       






\usepackage{tcs}

\usepackage{mysymbols} 
\usepackage{mathdots}



\usepackage{cleveref}
\def\showauthornotes{1}
\ifnum\showauthornotes=1
\newcommand{\anote}[1]{{\sf\color{orange}{ [Ainesh: #1] }}}
\newcommand{\gnote}[1]{{\sf\color{teal}{ [Goutham: #1] }}}

\else
\newcommand{\anote}[1]{}
\newcommand{\gnote}[1]{}
\fi

\newcommand{\norm}[1]{\ensuremath{\lVert #1 \rVert}}
\newcommand{\EE}{\mathbb{E}}


\title{A New Approach to Learning Linear Dynamical Systems}

\author{
Ainesh Bakshi\thanks{Supported by Ankur Moitra's ONR grant} \\
\texttt{ainesh@mit.edu} \\
MIT
\and
Allen Liu\thanks{Supported by an NSF Graduate Research Fellowship and a Fannie and John Hertz Foundation Fellowship} \\
\texttt{cliu568@mit.edu} \\
MIT
\and
Ankur Moitra\thanks{Supported by a grant from the ONR and a David and Lucile Packard Fellowship.} \\
\texttt{moitra@mit.edu} \\
MIT
\and
Morris Yau \\
\texttt{morrisy@mit.edu} \\
MIT
}

\date{}
\begin{document}

\maketitle
\begin{abstract}
    Linear dynamical systems are the foundational statistical model upon which control theory is built. Both the celebrated Kalman filter and the linear quadratic regulator require knowledge of the system dynamics to provide analytic guarantees.  Naturally, learning the dynamics of a linear dynamical system from linear measurements has been intensively studied since Rudolph Kalman's pioneering work in the 1960's \cite{kalman1960new}.  Towards these ends, we provide the first polynomial time algorithm for learning a linear dynamical system from a polynomial length trajectory up to polynomial error in the system parameters under essentially minimal assumptions; observability, controllability, and marginal stability.  Our algorithm is built on a method of moments estimator to directly estimate Markov parameters from which the dynamics can be extracted.  Furthermore we provide statistical lower bounds when our observability and controllability assumptions are violated.                        
\end{abstract}

\thispagestyle{empty}

\clearpage
\newpage

\microtypesetup{protrusion=false}
\tableofcontents{}
\thispagestyle{empty}
\microtypesetup{protrusion=true}

\clearpage

\setcounter{page}{1}

\section{Introduction}

Linear dynamical systems are the canonical model for time series data. At each time step $t$ there is an unknown hidden state $x_t \in \mathbb{R}^n$ and a known exogenous input $u_t \in \mathbb{R}^p$. The transition dynamics and observations $y_t \in \mathbb{R}^m$ are generated according to the following rules:
\begin{equation*}
\begin{split}
    x_{t+1} &= A x_{t} + B u_t + w_t ,\\
    y_{t} &= C x_t + D u_t + z_t ,
\end{split}
\end{equation*}
Here $A$, $B$, $C$ and $D$ are matrices of dimension $n \times n$, $n \times p$, $m \times n$ and $m \times p$ respectively. Moreover $w_t$ and $z_t$ are independent random variables and are called the process and observation noise respectively. It is standard to assume that they, along with the inputs and the initial state $x_0$, are all Gaussian, though we will work in a more general setting. 

Linear dynamical systems have wide-ranging applications in control theory \cite{grewal2010applications}, computer vision \cite{doretto2003dynamic}, speech recognition \cite{mesot2007switching}, econometrics \cite{athans1974importance}, healthcare \cite{li2007robust} and neuroscience \cite{schiff2009kalman}. They are the de facto model of choice due to their mathematical simplicity and because, when the parameters are known, making predictions about subsequent observations and making inferences about the unknown state are both algorithmically tractable. In fact these algorithms are simple, practical and statistically optimal. 

\emph{But what happens when the parameters are unknown?} The problem of estimating $A$, $B$, $C$ and $D$ from input-output sequences is called system identification and has been intensively studied since Rudolph Kalman's pioneering work in the 1960's \cite{kalman1960new}. There is a well-developed theory that furnishes asymptotic guarantees \cite{aastrom1971system, ljung1998system}. And more recently, many researchers have sought finite-sample guarantees both in the fully observed setting where $C = I$ \cite{faradonbeh2018finite, dean2020sample, simchowitz2018learning, sarkar2019near} and in the partially observed setting \cite{hardt2018gradient, oymak2019non, tsiamis2019finite, sarkar2019nonparametric, simchowitz2019learning}. Our focus here will be on obtaining running time and sample complexity bounds that are polynomial in the appropriate parameters and work under the most general conditions.

\subsection{Previous Work}

In the fully observed setting, the maximum likelihood estimator can be computed by solving ordinary least squares. It is known to be statistically optimal and there are strong finite sample guarantees on its performance \cite{faradonbeh2018finite, dean2020sample, simchowitz2018learning, sarkar2019near}. The partially observed setting is significantly more challenging because the problem of computing the maximum likelihood estimator becomes nonconvex. The EM algorithm \cite{ghahramani1996parameter} is often used in practice but it can get stuck in bad local minima. Our main focus will be on algorithms for learning partially observed linear dynamical systems with provable guarantees. There is a vast literature on this and related prediction problems (see Section~\ref{sec:related-work}). But all existing algorithms need to make one or more of the following types of restrictive assumptions:

\begin{itemize}
    \item[(1)] \textbf{Assumptions about the characteristic polynomial $q$ of $A$ or the phases of its roots.} Hardt, Ma and Recht \cite{hardt2018gradient} assumed that the image of the complex unit disk under $q$ is contained in the cone of complex numbers whose real part is larger than the absolute value of its imaginary part. For example, this is satisfied if the $\ell_1$-norm of the coefficients of $q$ is at most $\sqrt{2}/2$. Hazan et al. \cite{hazan2018spectral} studied the problem of predicting subsequent observations in a non-stochastic setting. Their bounds depend on the $\ell_1$-norm of the coefficients of a polynomial $p$ that vanishes on the phases of the eigenvalues of $A$. In particular, when there are few distinct roots or they are pairwise separated, the $\ell_1$-norm of the coefficients of $p$ can be much smaller than for $q$. This notion was further refined by Simchowitz et al. \cite{simchowitz2019learning}. However it is not clear why one would expect these norm bounds to be small. In many settings, unless there is extreme cancellation, the coefficients of $q$ would in fact be exponentially large. 
    
    \item[(2)] \textbf{Strict stability and mixing.} Another popular assumption is called strict stability, which stipulates that the spectral radius $\rho(A) < 1$. Often the transition matrix $A$ only satisfies $\rho(A) \leq 1$, which is called marginal stability. Consider a classic application in control theory, of tracking an object from radar measurements. The state of the object at some time step is its position, velocity and acceleration. The transition matrix is derived from Newton's laws and is upper triangular with ones along the diagonal, and so all of its eigenvalues are one. There are many other such examples, particularly in econometrics and coming from discretizations of ODEs. Algorithms that assume strict stability generally have bounds that depend on $1/(1-\rho(A))$ \cite{shah2012linear, hardt2018gradient}. Essentially, strict stability requires that the distribution of the $y_t$'s eventually converges and that there are no long-range correlations. So after about $1/(1-\rho(A))$ steps we essentially get fresh independent samples. Yet in many applications long-range correlations are an essential feature of the problem. Moreover getting around strict stability has many qualitative parallels with learning in graphical models without correlation decay \cite{bresler2015efficiently}, and learning in Gaussian graphical models without the restricted eigenvalue condition \cite{kelner2020learning}. 
    
    \item[(3)] \textbf{Restrictions on the dimension, etc.} Some algorithms only work in the single-input single-output setting, i.e. when $m = p =1$ \cite{hardt2018gradient}. Others have bounds that depend exponentially on the size of the largest Jordan block of $A$, or even treat the number of parameters of the linear dynamical system as a constant \cite{simchowitz2019learning}. 
\end{itemize}

By now, there is a standard blueprint which works as follows: The first step is to estimate the 
Markov parameters, given by
$$\begin{bmatrix} D &  CB &  CAB &  \cdots&  CA^s B \end{bmatrix}$$
The second step is to apply the Ho-Kalman algorithm \cite{ho1966effective}, which uses the Markov parameters to compute estimates $\hat{A}$, $\hat{B}$, $\hat{C}$ and $\hat{D}$ that are close to the true parameters in the appropriate metric. Oymak and Ozay \cite{oymak2019non} gave the first effective stability bounds for the Ho-Kalman algorithm. Thus the main issue is: How do you estimate the Markov parameters? Essentially all previous works use some form of linear regression. The analysis is based on expressing the observation $y_t$ as a linear function of the previous inputs and some noise terms. The Markov parameters can then be extracted from the regressor. The noise terms are a function of observation and process noise and also the quantity $A^s x_{t-s}$, which captures how the state at some previous time step affects the current state. When $A$ is strictly stable, this term decays exponentially. But when $A$ is only marginally stable, controlling this error presents many challenges. 

Our main question is:
\begin{quote}
\centering 
    \textit{Are there efficient algorithms for learning high-dimensional linear dynamical systems whose running time and sample complexity are polynomial in the appropriate parameters, and whose assumptions are essentially optimal? } 
\end{quote}

\subsection{Our Assumptions}
\label{subec:our-assumptions-informal}

It is important to draw a sharp distinction between the assumptions featured in the previous subsection and the more standard assumptions from control theory. In 1960, Rudolph Kalman \cite{kalman1960general} introduced the concepts of observability and controllability. Since then, it has been understood that they ought to in some sense govern what sorts of linear dynamical systems can be learned. In this subsection, we will review these assumptions and their natural quantitative counterparts. 

\paragraph{Observability and Controllability.}

Consider the \emph{observability matrix}: for an integer $s$, let
\begin{equation*}
    O_s =\begin{bmatrix} C^\top  & (CA)^\top & \ldots & \Paren{ CA^{s-1} }^\top  \end{bmatrix}^{\top}.
\end{equation*}
A linear dynamical system is \emph{observable} if for some $s$, the matrix $O_s$ has full column rank. Intuitively, this condition ensures that there is no portion of the state space that we cannot observe eventually. 

Now consider the \emph{controllability matrix}: for an integer $s$, let
\begin{equation*}
    Q_s = \begin{bmatrix} B & AB & \ldots & A^{s-1} B\end{bmatrix}
\end{equation*}
A linear dynamical system is \emph{controllable} if the controllability matrix has full row rank. Intuitively this condition ensures that there is no portion of the state space that cannot be reached by the appropriate inputs. If either observability or controllability are violated, it is information-theoretically impossible to learn. 

While the full rank conditions are enough to build an asymptotic theory, we will need natural quantitative counterparts to get finite sample guarantees. In particular we assume that $O_s$ and $Q_s$ have bounded condition number for some $s$. These assumptions are usually made in addition to the ones from the previous subsection, as they are needed in the stability bounds for the Ho-Kalman algorithm \cite{oymak2019non}. Furthermore we show that (see Theorem~\ref{thm:informal-lower-bound}) they are information-theoretically necessary in order to learn a linear dynamical system from a polynomial length trajectory. 

Finally, as is standard, we also assume that the system is \emph{non-explosive}, i.e. the eigenvalues of $A$ are bounded by $1$ in magnitude. Note that assuming the eigenvalues are bounded is much weaker than assuming the singular values are bounded (e.g. consider the types of upper triangular matrices that arise in control theory, including $n$-dimensional integrators~\cite{rowell2002state}).

\paragraph{Relaxed Control and Noise.} 

In the literature, the standard assumption is that the initial state, the process and observation noise are all drawn from a Gaussian. But Gaussianity is not meant to literally be true and it is often assumed for convenience. We show that we can dramatically relax this assumption to allow heavy-tailed distributions instead. In particular, for the control input, $u_t$, we only require the underlying distribution to have well-behaved fourth-moments:

\begin{definition}[(4,2)-Hypercontactivity]
A distribution $\calD$ over $\mathbb{R}^d$ is $(4,2)$-hypercontractive if for all $v$, 
\begin{equation*}
    \expecf{x\sim \calD}{ \Iprod{x, v}^4 } \leq O(1) \expecf{x\sim \calD}{\Iprod{x,v}^2}^2.
\end{equation*}
\end{definition}

We note that several families of distributions are hypercontractive, including Gaussians, uniform distributions over the hypercube, sphere and other convex bodies, the Laplace, gamma, chi-squared, Wishart, Dirichlet and beta distributions, and in general, all log-concave distributions. Further, the set of hypercontractive distributions is closed under affine transformations, products and mixtures. 

Finally, we only require that the distributions of the process noise, $w_t$, and observation noise, $z_t$, have bounded covariance. We state these assumptions formally in Section \ref{sec:formal-setup}. 



\subsection{Our Results}

Our approach is based on the \emph{method-of-moments} rather than least-squares regression.  Our starting point is the following folklore observation: for any integers $t>j$, 
\begin{equation}
\label{eqn:unbiased-est}
    \expecf{u_t \sim \calD_u }{ y_{t+j} u_t^\top } = \begin{cases} D \hspace{0.6in} \text{ if } j = 0 \\
        C A^{ j - 1} B \hspace{0.2in}\text{ otherwise}
        \end{cases}
\end{equation}
However getting accurate estimates of the Markov parameters is a challenging task. For a fixed $j$, since the expectation of the estimator $y_{t + j} u_t^\top$ does not depend on $t$, a natural approach to estimate 
$\expecf{}{y_{t+j} u_t^\top}$ is to average over several control-observation pairs: $\widehat{C A^{j-1} B} =\frac{1}{T} \sum_{t \in [T]} y_{t+j}u_t^\top$ and hope that this estimator converges to its expectation. Unfortunately, this is just not true! 

The first issue is that samples of the form $y_{t+j} u_{t}^\top$ are not independent for different values of $t$. 
The second issue is that, in the marginally stable setting, the variance of this statistic grows with $t$, even when the control and the noise are Gaussian (see Lemma~\ref{lem:variance-blowup} for a simple example).  Thus, directly using the empirical estimate can be highly inaccurate \textit{no matter how long our trajectory is}. One of the key steps in our algorithm is to learn a transformation of the observations to a new time series $\Set{\hat{y}_1, \hat{y}_2, \ldots , \hat{y}_T}$ such that $\expecf{}{\hat{y}_{t+j} u^\top_t} = CA^{j-1}B$ and the variance of our estimator is bounded.  As a result, we obtain the following theorem:

\begin{theorem}[Efficiently Learning a Linear Dynamical System, informal Theorem~\ref{thm:main-learning-lds}]
Given $\epsilon>0$, a fixed polynomial length trajectory from a linear dynamical system satisfying mild non-degeneracy assumptions (see Subsection \ref{subec:our-assumptions-informal}), there exists an algorithm that outputs estimates $\hat{A}, \hat{B}, \hat{C} , \hat{D}$ such that with probability at least $9/10$, there exists a similarity transform $U$ satisfying 
\begin{equation*}
\begin{split}
    \Norm{A - U^{-1} \hat{A}  U }  \leq
    \epsilon  , 
    \Norm{B - U^{-1} \hat{B} }  \leq   \epsilon, 
    \Norm{C - \hat{C}U }  \leq  \epsilon, 
    \Norm{D - \hat{D}}  \leq   \epsilon .
\end{split}
\end{equation*}
Further, the algorithm runs in time that is  a fixed polynomial in all the parameters. 
\end{theorem}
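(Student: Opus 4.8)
The plan is to follow the standard two-stage blueprint — first estimate the Markov parameters $[\,D,\ CB,\ CAB,\ \ldots,\ CA^sB\,]$, then run the Ho-Kalman algorithm with the Oymak--Ozay stability bounds — but to replace the naive empirical average $\frac1T\sum_t y_{t+j}u_t^\top$, which has unbounded variance, by a \emph{corrected} statistic. Expanding the dynamics gives $y_{t+j} = CA^j x_t + \sum_{i=0}^{j-1} CA^{j-1-i}Bu_{t+i} + Du_{t+j} + (\text{noise in }[t,t+j])$, so the only term responsible for the variance blow-up of $y_{t+j}u_t^\top$ is $CA^j x_t$, whose magnitude grows with $t$ in the marginally stable regime. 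The idea is to subtract off a data-dependent estimate of $CA^j x_t$ built \emph{only} from information strictly before time $t$ — a fixed-length window $W_t = (y_{t-1},\ldots,y_{t-L};\,u_{t-1},\ldots,u_{t-L})$ — together with the known input terms $u_{t+1},\ldots,u_{t+j}$, producing a new time series $\hat y_{t+j}$. Since the subtracted quantity is independent of $u_t$ (mean-zero and independent of the strict past), $\mathbb{E}\!\left[\hat y_{t+j}u_t^\top\right]=\mathbb{E}\!\left[y_{t+j}u_t^\top\right]=CA^{j-1}B$ is unchanged, while the variance of $\hat y_{t+j}$ becomes that of a prediction residual.

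The first real step is to establish the existence of a good bounded-window predictor. Although $A$ is only marginally stable (possibly with eigenvalues on the unit circle and nontrivial Jordan structure), observability implies the associated Kalman/Wiener filter of the system is \emph{strictly} stable: its closed-loop matrix $\bar A$ contracts at a rate controllable by the condition numbers $\kappa$ of $O_s$ and $Q_s$ and the noise covariances. Hence there is a linear functional $\Phi_j^*$ of a window of length $L=\mathrm{poly}(n,m,p,\kappa)\cdot\log(1/\epsilon)$ with $CA^jx_t-\Phi_j^*(W_t)$, and thus $\hat y_{t+j}:=y_{t+j}-\Phi_j^*(W_t)-\sum_{i=1}^{j}CA^{j-1-i}Bu_{t+i}$ (with the convention $CA^{-1}B=D$), having variance bounded independently of $t$; this only uses bounded covariance of $w_t,z_t$, since the linear filter remains contractive even when it is no longer statistically optimal. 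Since $\Phi_j^*$ and the input-correction coefficients are unknown, the second step is to \emph{learn} them: regress $y_{t+j}$ onto $(W_t,u_t,u_{t+1},\ldots,u_{t+j})$ over the trajectory, read off $\widehat{CA^{j-1}B}$ as the coefficient block on $u_t$, and use the rest to define $\hat y_{t+j}$. The population cross-correlation between $u_t$ and $W_t$ vanishes, so the regression essentially decouples and its minimizer is consistent; one needs a lower bound on the smallest eigenvalue of the empirical Gram matrix of the features, which is where controllability (inputs and process noise excite every state direction) and $(4,2)$-hypercontractivity of $u_t$ enter.

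Third, with the learned transformation in hand, I would analyze the method-of-moments estimator $\widehat{CA^{j-1}B}=\frac1T\sum_t \hat y_{t+j}u_t^\top$. Two observations make this work: $\hat y_{t+j}$ is independent of $u_t$ — it is built from inputs at times $\neq t$ and from noise — so $\mathbb{E}\!\left[\Norm{\hat y_{t+j}u_t^\top}^2\right]=\mathbb{E}\!\left[\Norm{\hat y_{t+j}}^2\right]\mathbb{E}\!\left[\Norm{u_t}^2\right]$ is finite despite $w_t,z_t$ having only bounded second moments; and since $\hat y_{t+j}$ is essentially the residual of a contractive filter, the cross-covariances of the summands decay geometrically in their time separation, so the average has variance $O(1/T)$ and Chebyshev (union-bounded over the $O(s)$ Markov parameters, boosted by median-of-means over time blocks if a sharper tail is wanted) gives each estimate to accuracy $\epsilon'=\mathrm{poly}(\epsilon,1/\kappa)$ with $T=\mathrm{poly}(n,m,p,\kappa,1/\epsilon)$. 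Finally, feeding $\{\widehat D,\widehat{CB},\ldots,\widehat{CA^sB}\}$ into the Ho-Kalman algorithm yields $\hat A,\hat B,\hat C,\hat D$ within $\epsilon$ of $A,B,C,D$ up to a single similarity transform $U$, by the cited stability analysis together with the assumed bounds on the condition numbers of $O_s$ and $Q_s$.

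The main obstacle I anticipate is in the second step: proving that a \emph{bounded}-length window truly suffices — i.e., quantifying the stability and convergence rate of the filter purely in terms of the observability/controllability condition numbers, with no spectral-gap-of-$A$ assumption — and simultaneously showing the corresponding regression is well-conditioned and estimable from a single, heavily correlated trajectory whose observation features inherit only the bounded-covariance (hence possibly heavy-tailed) behavior of $w_t$ and $z_t$. Controlling the design Gram matrix under these weak moment assumptions rather than Gaussianity, and propagating the resulting filter-estimation error into a variance bound for $\hat y_{t+j}$, is the part I expect to require the most care, likely via truncated or otherwise robust estimators.
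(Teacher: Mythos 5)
Your high-level architecture (stabilize the observations by subtracting a linear function of the strict past, form the cross-correlation with $u_t$, then run Ho--Kalman) matches the paper, but the two steps you lean on hardest are exactly the ones the paper deliberately routes around, and as written they contain genuine gaps. First, the existence of your bounded-window predictor rests on the claim that the Kalman/Wiener filter is strictly stable with a contraction rate polynomial in the condition numbers of $O_s$ and $Q_s$. This does not follow from the paper's assumptions: the process noise is only required to satisfy $\Sigma_w \preceq \sigma_w I$ (it may be degenerate or identically zero), $A$ is only marginally stable, and quantitative exponential stability of the filter is precisely the extra assumption that prior work on prediction for marginally stable systems has to impose separately. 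The paper avoids this entirely with an algebraic identity: the lower bound on $\sigma_{\min}(O_s)$ guarantees matrices $\alpha_1,\dots,\alpha_s$ of norm $\mathrm{poly}(\kappa,n,\norm{C})$ with $CA^{k+s}=\sum_j \alpha_j CA^{s-j}$, so the transform $y_{t+k}-\sum_j\alpha_j y_{t-j}$ annihilates every long-range term \emph{exactly}, with a window of length $s$ and no filter-stability input whatsoever.

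Second, learning the transform by regressing $y_{t+j}$ onto the window is the approach the paper explicitly rejects: the regression minimizer need not be close to the annihilating coefficients, and analyzing it directly incurs unspecified, potentially exponential dependencies on the system parameters; moreover the regressors $y_{t-1},\dots,y_{t-L}$ themselves have covariance growing with $t$ in the marginal regime, so your Gram-matrix lower bound is not implied by controllability or hypercontractivity of $u_t$. The paper instead solves a feasibility program (bounded $\norm{\alpha_j}_F$ plus boundedness of $\hat{y}$ at a sparse set of checkpoints) and proves, via a Paley--Zygmund-style anti-concentration argument from $(4,2)$-hypercontractivity, that any feasible point makes the potential $G_{\alpha,L}=\sum_i\norm{F_\alpha(A)A^iB}_F^2$ small, which in turn bounds the estimator variance. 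You also do not address the fact that the learned transform depends on the same trajectory used to form the averages $\frac1T\sum_t\hat{y}_{t+j}u_t^\top$, so the independence you invoke for the $O(1/T)$ variance bound is unavailable; the paper handles this by proving a symbolic matrix inequality uniform over all $\alpha$ and combining it with a net argument, which itself requires the nontrivial bound $\norm{A^L}\le n(2(1+\norm{A})L)^n$ for matrices whose eigenvalues lie in the unit disk. Finally, a smaller slip: $\hat{y}_{t+j}$ is not independent of $u_t$ --- it necessarily contains the term $CA^{j-1}Bu_t$ you are trying to extract; only its remainder is independent of $u_t$.
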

\begin{remark}
Note that it is only possible to recover the system parameters up to some global transformation $U$ since all such transformations lead to equivalent dynamics, see e.g. \cite{oymak2019non}.
\end{remark}


The main appeal of our algorithm is that it works in essentially the most general setting possible. In particular we show the following lower bound:

\begin{theorem}[Sample Complexity Lower Bound for Ill-Conditioned Systems]
\label{thm:informal-lower-bound}[Informal, see Theorem~\ref{thm:lower-bound}]
If for an LDS, the observability matrix $O_s$  has smallest singular value less than $\delta$ for all orders $s$, then any algorithm that uses less than $\sim 1/\sqrt{\delta}$ length trajectories incurs constant error in estimating $A,B,C,D$ with constant probability.  The same statement holds with the observability matrix $O_s$ replaced by the controllability matrix $Q_s$.  In particular, if $\delta$ is exponentially small, then an exponential number of samples are required to learn the parameters.
\end{theorem}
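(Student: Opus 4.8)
The plan is to prove the lower bound by Le Cam's two-point method. I will build two linear dynamical systems $\mathcal{M}_1=(A_1,B_1,C_1,D_1)$ and $\mathcal{M}_2=(A_2,B_2,C_2,D_2)$ so that: (i) each meets the stated hypothesis ($\sigma_{\min}(O_s)<\delta$ for every order $s$) together with all the ``benign'' assumptions — constant-condition-number controllability, eigenvalues of $A$ at most $1$, Gaussian inputs and noise — so that the only defect is the conditioning of the observability matrix; (ii) their parameters are $\Omega(1)$-apart even after the best similarity transform; and (iii) the laws of their length-$T$ trajectories are $o(1)$-close in total variation once $T\lesssim 1/\sqrt{\delta}$. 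Given (i)--(iii), the standard two-point argument shows that on at least one of $\mathcal{M}_1,\mathcal{M}_2$ — which satisfies the hypothesis — every algorithm fed a trajectory of length $\lesssim 1/\sqrt\delta$ outputs parameters that are $\Omega(1)$-far from the truth with constant probability. The controllability statement will follow from the dual construction with the roles of $B$ and $C$ exchanged.

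For the construction I would take state dimension $2$ and set $A_i=\mathrm{diag}(1,\mu_i)$ with $\mu_1=1/4$, $\mu_2=1/2$; $B_1=B_2=(1,1)^{\top}$; $C_1=C_2=(1,\gamma)$ with $\gamma=\delta/2$; $D_1=D_2=0$; put the process noise on the first coordinate only, take standard Gaussian observation noise and inputs, and $x_0=0$. The second mode is then stable ($|\mu_i|<1$), normally controllable, but barely observable: with $v^{*}=(0,1)^{\top}$ its eigenvector, $\Norm{O_sv^{*}}^{2}=\sum_{k=0}^{s-1}\Norm{C_iA_i^kv^{*}}^{2}=\gamma^{2}\sum_{k=0}^{s-1}\mu_i^{2k}\le \gamma^{2}/(1-\mu_i^{2})<\delta^{2}$ for all $s$, so $\sigma_{\min}(O_s)<\delta$ for all $s$, yet $O_2$ has full rank so the system is genuinely observable, merely ill-conditioned; I would then verify that the controllability matrices have constant condition number and the remaining hypotheses hold. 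It is essential that the two systems differ in the \emph{eigenvalue} $\mu_i$ of the weak mode and not in its observation or input coefficient: changing the observation coefficient is itself an $O(\delta)$ change to $C$, and changing the input coefficient can be undone by rescaling the second coordinate at the price of only an $O(\delta)$ change to $C$, so neither produces systems that are $\Omega(1)$-apart in the similarity-quotiented metric — whereas the eigenvalue is a similarity invariant.

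Two verifications remain. For (ii): because $A_1$ is diagonal (its eigenvectors are perfectly conditioned), Bauer--Fike gives that for every invertible $U$ the eigenvalues of $U^{-1}\hat A U$, which are exactly those of $\hat A$, lie within $\Norm{A_1-U^{-1}\hat A U}$ of $\{1,1/4\}$, and similarly within $\Norm{A_2-U^{-1}\hat A U}$ of $\{1,1/2\}$; since $1/4$ and $1/2$ are a constant apart, no $\hat A$ can be simultaneously $\Omega(1)$-close in this quotient sense to both $A_1$ and $A_2$, hence $\mathcal{M}_1,\mathcal{M}_2$ are $\Omega(1)$-apart. For (iii): condition on the exogenous inputs $u_{1:T}$. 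Under either system $(y_1,\dots,y_T)\mid u$ is Gaussian; since the two systems agree on the first mode, on $C$, on $D$, and on the noise, and the weak mode carries no process noise, their conditional covariances coincide — call it $\Sigma_0\succeq\sigma_z^{2}I$ — while their conditional means differ by $\Delta\mu_t=\gamma\sum_{k<t}(\mu_1^{\,t-1-k}-\mu_2^{\,t-1-k})u_k$. Hence
\[
  \KLdist\bigl(\mathcal{M}_1(\cdot\mid u)\,\Vert\,\mathcal{M}_2(\cdot\mid u)\bigr)=\tfrac12(\Delta\mu)^{\top}\Sigma_0^{-1}(\Delta\mu)\le\tfrac{1}{2\sigma_z^{2}}\sum_{t=1}^{T}\Delta\mu_t^{2},
\]
and $\EE_u[\Delta\mu_t^{2}]=\gamma^{2}\sum_{j\ge0}(\mu_1^{\,j}-\mu_2^{\,j})^{2}\cdot O(1)=O(\gamma^{2})=O(\delta^{2})$ uniformly in $t$ (the geometric series converges since $\mu_i<1$). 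As $u$ has the same marginal under both systems, $\KLdist(\mathcal{M}_1\Vert\mathcal{M}_2)=\EE_u\KLdist(\cdot\mid u)=O(\delta^{2}T)$, which is below $1/10$ as soon as $T\lesssim 1/\delta^{2}$, and Pinsker then gives $d_{\mathrm{TV}}\le 1/4$ — in fact indistinguishability up to length $\sim 1/\delta^{2}\gg 1/\sqrt\delta$. Plugging (ii) and (iii) into Le Cam's inequality finishes the proof; the controllability version is verbatim the same after replacing the weakly-observed mode by a weakly-\emph{controlled}, well-observed one, so that $\sigma_{\min}(Q_s)<\delta$ for all $s$, and again perturbing its eigenvalue.

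The step I expect to be the main obstacle is the interplay of (ii) and (iii): one must find a perturbation that is genuinely $\Omega(1)$ in the space of systems modulo similarity — which rules out the natural-looking but ineffective choices of perturbing $B$ or $C$ on the weak mode — while remaining invisible in a \emph{single, correlated} trajectory of length up to $1/\sqrt\delta$. The resolution is that the weak mode's eigenvalue is a similarity invariant, so it cannot be conjugated away, whereas the mode is observed only through a coefficient $\gamma=O(\delta)$ and hence shifts each observation by $O(\delta)$, so its total contribution to the trajectory log-likelihood grows only like $\delta^{2}T$.
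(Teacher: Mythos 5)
Your proposal is correct, but it proves the theorem by a genuinely different route than the paper. The paper's argument (Lemmas~\ref{lem:observability-lowerbound} and \ref{lem:controllability-lowerbound}, Theorem~\ref{thm:lower-bound}) is \emph{instance-wise}: given any $(\delta,v)$-unobservable pair $(A,C)$, it perturbs $B \mapsto B + vu^\top$ along the unobservable direction, shows the two joint laws of $(u_{0:T},y_{0:T})$ are Gaussians whose covariances are multiplicatively $(1\pm O(T\delta\norm{u}))$-close, and concludes $d_{\mathsf{TV}} \le O(T^2(m+p)\norm{u}\delta)$ --- hence the $1/\sqrt{\delta}$ threshold. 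To certify that this $B$-perturbation is not absorbed by a similarity transform, the paper needs a $c$-genericity assumption on $(A,v)$. Your construction sits exactly in the blind spot of that assumption: a diagonal $A$ with $v$ an eigenvector is \emph{not} $c$-generic, and you correctly diagnose that perturbing $B$ (or $C$) on the weak mode is then conjugated away up to $O(\delta)$. Your fix --- perturbing the similarity-invariant eigenvalue of the weakly observed mode and separating via Bauer--Fike --- is clean and needs no genericity. Your distinguishability analysis is also different and, for your instance, quantitatively stronger: by conditioning on $u$ and putting process noise only on the shared unit-eigenvalue mode, the two conditional laws differ only in mean, giving $\mathrm{KL} = O(\delta^2 T)$ and indistinguishability up to $T \sim 1/\delta^2 \gg 1/\sqrt{\delta}$, versus the paper's covariance-comparison bound. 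The trade-off is scope: your two-point argument establishes the existential form of the lower bound (there exist systems with $\sigma_{\min}(O_s) < \delta$ for all $s$ that cannot be learned from short trajectories), whereas the paper's Theorem~\ref{thm:lower-bound} gives a lower bound for \emph{every} generic ill-conditioned instance. If the instance-wise reading of the informal statement is intended, you would need to extend your argument beyond the specific diagonal pair --- e.g., by combining your eigenvalue perturbation (for non-generic $A$) with the paper's $B$-perturbation (for generic $A$).
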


It turns out that super-resolution \cite{donoho1992superresolution, candes2014towards}, namely the task of recovering a sparse signal from noisy low-frequency measurements, corresponds to a special case of learning linear dynamical systems. It is known that super-resolution exhibits a sharp phase transition, where the problem goes from having efficient algorithms with polynomial running time and sample complexity, to being information-theoretically impossible, unless the noise is exponentially small \cite{moitra2015super}. Thus there are some linear dynamical systems where it is impossible to learn the true parameters with bounded length trajectories. We refine this connection to show instance-wise lower bounds for learning any linear dynamical system whose observability or controllability matrices are close to singular. Thus the assumptions our algorithm needs are qualitatively tight, and our results close the question of what linear dynamical systems can be efficiently learned. 


\section{Related Work}
\label{sec:related-work}
\paragraph{Linear Time Invariant Systems: Identification, Prediction, Estimation.}
There is a long history of identifying linear dynamical systems from measurements, see \cite{Galrinho2016LeastSM} for extensive references.  A focus of these works is on the "pre-filtering" approach to handling long range correlations in learning dynamical systems, see \cite{ding2013} \cite{zhang2011} \cite{spinelli2005} \cite{simchowitz2019learning}.  Recently, there is a flurry of work on prediction and estimation for LDS's through the framework of no regret learning both in the fully observable setting \cite{simchowitz2018} \cite{sarkar2019near} \cite{faradonbeh2017} and the partially observed setting \cite{shah2012linear} \cite{hardt2018gradient}.  For a variety of assumptions on the dynamics matrix, such as diagonalizability, there is work on learning marginally stable LDS's \cite{hazan2018spectral} \cite{hazan2017}.  Many works take a regression approach to estimating the markov parameters of the LDS for strictly stable systems see \cite{lee2020} \cite{SRD2021} \cite{salar2020} \cite{DM2022}.  In these settings it is possible to take advantage of the decay of the coefficients of the associated regressors.  Marginal stability can be handled with multiple trajectories see \cite{zl2021} \cite{SOF2022}.  Closed loop system identification has also been studied see \cite{LL2019} \cite{LAHA2020}.  

Somewhat related to our work is the problem of prediction without system identification in marginally stable LDS's \cite{tsiamis2019finite} \cite{ghai2020} but with an assumption on the exponential decay of the kalman filter coefficients.  In  \cite{RJR2020}, the exponential stability of the Kalman filter assumption is removed via a procedure that builds a succinct bank of filters for the prediction task and with an additional assumption on the dynamics having real eigenvalues.  For a survey of the area see \cite{TZMP2022}.

\paragraph{Relaxing distributional assumptions.}
In recent years, there has been a tremendous amount of work on designing algorithms that do not rely on strong distributional assumptions, such as Gaussianity, and only require much milder conditions. In particular, hypercontractivity of linear functions and low-degree polynomials have been identified as key analytic conditions that admit efficient algorithms for numerous problems in high-dimensional algorithmic statistics. In particular, souped up variants of \emph{hypercontractivity} are used for
heavy-tailed mean and covariance estimation~\cite{lugosi2019sub, hopkins2018sub,cherapanamjeri2020algorithms}, robust moment estimation~\cite{kothari2017outlier}, robust regression~\cite{klivans2018efficient, prasad2018robust, bakshi2021robust, zhu2020robust,cherapanamjeri2020optimal, pensia2020robust, jambulapati2021robust}, robustly clustering mixture models~\cite{hopkins2018mixture,kothari2018robust,bakshi2020outlier,diakonikolas2020robustly} and list-decodable learning~\cite{karmalkar2019list, raghavendra2020regression, raghavendra2020list, bakshi2020list, cherapanamjeri2020list, ivkov2022list}. Algorithms with relaxed distributional assumptions were also recently given for online regression \cite{chen2022online} and Kalman filtering \cite{chen2022kalman}.

\section{Technical Overview}\label{sec:technical-overview}

In this section, we describe our key algorithmic ideas and the corresponding technical challenges involved. 

\subsection{A Thought Experiment} Consider the setting where we already know the parameters, $A,B,C$ and $D$, of the underlying linear dynamical system. While there is nothing left to learn in this setting, we can still ask whether there exists a transformation of the observations $\Set{ y_t }_{t \in [0, T]}$ to a new time series $\Set{ \hat{y}_t }_{t \in [0, T]}$, such that the variance of the random variable $\hat{y}_{t + j} u_t$ is bounded.

It is indeed possible to do so by considering a simple linear transformation of the observations: let $\hat{y}_t = y_t - \sum_{j =1}^{n} c_j y_{t-j}$, where the $c_j$'s are the coefficients of the characteristic polynomial of $A$. To see why this works, we recall that by the Cayley-Hamilton theorem (see Fact \ref{fact:cayley-hamilton}), the coefficients of the characteristic polynomial satisfy the following algebraic identity: 

\begin{equation}
\label{eqn:char-identity}
    A^{n} - \sum_{j \in [n]} c_j A^{n-j} = 0
\end{equation}

Therefore, assuming (for the purposes of exposition) that $w_t$'s and $z_t$'s are bounded, and $D=0$, we have
\begin{equation}
\label{eqn:hatyt-split}
\begin{split}
    \hat{y}_t  & = y_t - \sum_{j =1}^{n} c_j y_{t-j}\\
    & = \underbrace{ \sum_{i = 1}^{n} \Paren{ C A^{i-1} B - \sum_{j = 1}^{i - 1} c_j CA^{i-j - 1}  B  } u_{t-i} }_{(a)}  +  \underbrace{ \sum_{i = n+1}^{t} \Paren{ C A^{ i-n-1} \Paren{ A^{n} - \sum_{j = 1}^{n} c_j  A^{n-j} }  B  } u_{t-i}}_{(b)}
\end{split}
\end{equation}
We note that  term (a)  above only has $n$ terms and does not grow as a function of $t$, and term (b) is in fact zero, since we can repeatedly apply the identity from Equation \eqref{eqn:char-identity}. A simple computation then implies that the estimator $\hat{y}_{t+j} u_{t}^\top$ satisfies $\expecf{}{ \hat{y}_{t+j} u_{t}^\top } = C A^{j-1} B$ and has bounded variance. We replicate this thought experiment, by learning the coefficients that stabilize the variance of our estimator from the observations directly. We dedicate the rest of the technical overview to describe how we accomplish this task.

\subsection{Learning the Stabilizing Transform}

For ease of exposition, we assume that $C$ is a $1 \times n$ matrix and therefore the resulting observations, $y_t$, are scalars. A natural approach is to then consider the following least-squares regression problem:

\begin{equation*}
    \min_{c_1, c_2, \ldots c_n } \sum_{t \in [T]} \Paren{ y_t - \sum_{j\in [n]} c_j y_{t-j} }^2
\end{equation*}

We know that the coefficients of the characteristic polynomial are a feasible solution to this regression problem, and the resulting linear transformation of the observations results in an estimator with bounded variance.   Such an approach also appears in~\cite{simchowitz2019learning} but they incur unspecified, potentially exponential dependencies on the system parameters due to the complexities of analyzing this regression problem directly.    In particular, we do not have fine-grained control over the solution returned by solving the regression problem, and apriori, the regression solution need not be close to the coefficients of the characteristic polynomial of $A$.

\paragraph{Convex Program.} Instead, we take a more direct approach to stabilizing the variance and consider a different convex program, specifically designed to do so. In particular, we find a vector $\alpha = \Paren{\alpha_1, \alpha_2, \ldots \alpha_s}$ such that the following constraint system is feasible:
\begin{equation}
    \calC_{\alpha}=
  \left \{
    \begin{aligned}
      & \forall j \in [s]
      & \abs{\alpha_j}^2 
      & \leq  P_0\\
      & \forall i \in [T]
      &  \Big| y_{i + k }  - \sum_{j \in [s]}  \alpha_j\cdot  y_{i -j } \Big|^2 
      & \leq  P_1  \\
    \end{aligned}
  \right \},
\end{equation}
where $s$ is the integer satisfying the observability and controlability assumptions from Definition~\ref{def:system-assumptions}, and $P_0$ and $P_1$ are sufficiently large polynomials in the system parameters (see Algorithm \ref{algo:stabilizing} for details).  

Intuitively, the first constraint posits that each coefficient, $\alpha_j$ is bounded in magnitude. This is necessary since the process and observation noise scale proportional to the coefficients in the linear transformation, and we cannot afford to pay exponentially in these quantities. The second constraint posits that the resulting observations themselves are bounded, and tries to enforce a universal bound on the variance of each $\hat{y}_t$ appearing in the estimator $\frac{1}{T}\sum_{t\in [T]} \hat{y}_{t+k}u_t^\top$.  

\paragraph{Feasibility.} Observe, in contrast to the characteristic polynomial, we are only taking a linear combination of the previous $s$ (potentially $\ll n$) observations. 
While this difference does not manifest itself when $C$ is $1 \times n$, it becomes crucial when $C$ is $m \times n$ for $m > 1$ for obtaining guarantees that depend only on the observability and controllability matrix.  Also, note that we are expressing $y_{t+k}$, rather than $y_t$, as a linear combination of $y_{t-1}, \dots y_{y-s}$.  This difference is also crucial in the construction and analysis of our estimator.

In order to establish feasibility of $\calC_{\alpha}$, we invoke the observability assumption: since $O_s$ has bounded condition number, there exists a vector $\alpha^* = \Paren{\alpha_1^* , \ldots , \alpha_s^*}$ such that each $\alpha$ is bounded and the following identity holds:

\begin{equation*}
    CA^{k+s} - \alpha_1 CA^{s-1} - \alpha_2 CA^{s-2} - \ldots - \alpha_s C  = 0.
\end{equation*}
We then follow an argument similar to the one in Equation \eqref{eqn:hatyt-split} to show that the magnitude of $\hat{y}_t$'s is bounded (see Lemma \ref{lem:feasibility} for details). We also note that the above program is convex, and admits an efficient separation oracle, and therefore, we can find a feasible $\alpha$ in polynomial time. Interestingly, the feasibility analysis only requires that the covariance of the control input, process noise and observation noise be bounded, and does not require strong assumptions such as sub-Gaussian tails. 

\paragraph{The Anti-concentration Potential.} Next, we show that any feasible solution to the constraint system actually yields a stabilized estimator. To accomplish this goal, we design a potential function that captures the variance of our estimator, and argue that if the potential is large, with high probability, some constraint in $\calC_{\alpha}$ must be violated. In particular, for any vector $\alpha$, and integer $l$, we consider the potential 
\begin{equation*}
    \calG_{\alpha, l} = \sum_{i=0}^{l} \norm{ F_{\alpha}(A) A^i B }_F^2
\end{equation*}
where 
\[
F_{\alpha}(A) = CA^{k+s} - \alpha_1 CA^{s-1} - \alpha_2 CA^{s-2} - \ldots - \alpha_s C \,.
\]
We observe that the terms appearing in this potential are the trace of  $\expecf{}{ \Paren{  F_{\alpha} A^{i} B u_{t-i} }^\top \Paren{  F_{\alpha} A^{i} B u_{t-i} } }$, which captures how large intermediate terms are, as a function of $\alpha$. In particular, if $\alpha= \alpha^*$, the trace would be $0$. We make this intuition precise in Lemma~\ref{claim:potential-is-variance}.


Next, we show that we can split up the terms appearing in $\hat{y}_{t+k}$ into three parts as follows:
\begin{equation*}
    \hat{y}_{t+k} = X_t + V_t + W_t, 
\end{equation*}
where $X_t$ is polynomially bounded in the system parameters, $V_t$ is a random variable such that the covariance matrix of $V_t$, denoted by $\Sigma_{V_t}$ satisfies $\textsf{Tr}(\Sigma_{V_t}) = \mathcal{G}_{\alpha, l}$, and $W_t$ is a random variable that we do not have control over, and may potentially be unbounded. This presents obstacle since $W_t$ can wipe out the information contained in $V_t$, and $\calG_{\alpha,l}$ may be large without violating any constraint in $\calC_{\alpha}$. 

Here, we observe that such an event can be avoided precisely when the random variable $V_t$ is \emph{anti-concentrated}, i.e. the probability that $V_t$ lands in a ball of small radius is small. Perhaps counter-intuitively, we show that if the $4$-th moment of $V_t$ concentrates, then it already possesses the \emph{anti-concentration} properties we require. We make this precise in Lemma~\ref{claim:anti-concentration}, where we establish a Payley-Zigmund style inequality, showing that if a random variable is  $(4,2)$-hypercontractive (see Definition \ref{def:hypercontractivity}), then the probability it lands in any interval that is a constant fraction of it's variance is bounded by a constant. Finally, we show that $V_t$ is $(4,2)$-hypercontractive if the control inputs are $(4,2)$-hypercontractive, and therefore, significantly relax the Gaussianity assumption. 

To summarize, we show that if the potential is large, the magnitude of $V_t$ is large, and since $V_t$ is anti-concentrated, $W_t$ cannot \textit{wash away} this information. Therefore,  $\abs{\hat{y}_{t+k}}$ must be large, for some $t$, which is a contradiction to the feasibility of $\calC_{\alpha}$.

\paragraph{Dependent Random Variables and Decoupling.} We then establish that if the potential $\calG_{\alpha}$ is small, for a fixed setting of $\alpha$, the resulting estimator has bounded variance: 
\begin{equation}
\label{eqn:var-bound}
    \expecf{}{ \Norm{ \frac{1}{L} \sum_{t \in [L]} \hat{y}_{t+j} u_{t}^\top  - C A^{j-1}B }^2} \leq \frac{P_1}{L} \Paren{ \norm{\alpha}^2 + G_{\alpha,L} },
\end{equation}
for some fixed polynomial $P_1$ in the system parameters.  We treat $L$ as a sufficiently large polynomial in the system parameters and $1/\eps$ (where $\eps$ is the desired accuracy).

This argument is fairly involved and heavily uses the independence of the $u_t, w_t$ and $z_t$'s.  We refer the reader to Lemma~\ref{lem:variance-bound-v2} for a complete proof. While the above inequality holds for a fixed setting of $\alpha$, we note that the $\alpha$'s output by solving the constraint $\calC_{\alpha}$ themselves depend on the randomness in the control input and the noise non-trivially. 

To overcome this issue, we decouple the $\alpha$'s from $u_t$'s and establish a symbolic matrix inequality, where the matrices only depend on the $u_t$'s. Here, we treat the vector $v_{\alpha} = (1,\alpha_1, \ldots, \alpha_s)$ as a formal variable, and write the potential as a quadratic form in the vector $v_\alpha$:
\begin{equation*}
    \calG_{\alpha, L} =  v_{\alpha}^\top G_{L} v_{\alpha},
\end{equation*}
where $G_L$ is a PSD matrix. We note that such a representation always exists and is unique since $\calG_{\alpha, L}$ is a sum-of-squares in $\alpha$. Similarly, we observe that the variance we want to bound admits a similar decomposition: let $M_j$ be the PSD matrix such that
\begin{equation*}
    \expecf{}{ \Norm{ \frac{1}{L} \sum_{t \in [L]} \hat{y}_{t+j} u_{t}^\top - C A^{j-1}B  }^2} = v^\top_{\alpha} M_j v_{\alpha}.  
\end{equation*}

Observe, the matrices $G_{L}$ and $M_j$ are independent of the formal variables $\alpha$, and in Corollary~\ref{coro:estimator-variance-symbolic} we establish the following inequality:
\begin{equation}
\label{eqn:mat-inequality-intro}
    \expecf{u,w,z}{ M_j } \preceq \frac{P_1}{L}\Paren{I + G_L}
\end{equation}
Since the above inequality holds for all quadratic forms simultaneously, one natural way to proceed would be to consider an $\epsilon$-net over the $\alpha$'s and union bound over each vector in the net satisfying 
the quadratic form in Equation \eqref{eqn:mat-inequality-intro}. 
To execute this, we need a net that is fine enough to account for how much error we accumulate in a term of the from $\hat{y}_{t+j} u_t^\top$. Note, the largest terms we need to account for are roughly of the form $\alpha_j A^{L}$, which naively requires a $1/\norm{A}^L$-net. Unfortunately, this net is too fine and we cannot afford to union bound over all the vectors in this net  because in some sense we only have $L$ samples. 

\paragraph{Bounded Eigenvalues to Smaller Nets.} To address the issue above, we show that for any $n\times n$ matrix $A$ with complex entries, if the eigenvalues of $A$ are bounded by $1$ in magnitude, the operator norm of $A^L$ in fact grows as $L^n$, instead of exponentially in $L$ (see Lemma \ref{claim:powering-bound} for a precise statement). Here, we crucially note that we only assume the eigenvalues, instead of the singular values (which would make this statement trivial but would rule out several important families of linear dynamical systems), are bounded. With this insight, we can then union bound over all vectors $\alpha$ in a $1/L^n$-net, and establish equation ~\eqref{eqn:var-bound} for the $\alpha$ output by solving the constraint system $\calC_{\alpha}$. To conclude, we have shown that the variance of our estimator is bounded, and therefore, we can estimate each block of the Markov Parameter matrix.

\subsection{Lower Bound for Ill-Conditioned LDS's}
In light of our results on learning LDS's, a natural question is whether the assumptions on observability and controllability are necessary.  We exhibit information-theoretic lower bounds on the sample complexity of learning a linear dynamical systems with Gaussian noise and Gaussian inputs, when the observability or controllability matrices are exponentially ill conditioned.  For this section, we discuss the case when the observability matrix is ill conditioned.  The case when the controllabiltiy matrix is ill conditioned follows essentially the same argument.
\begin{definition}
We say that an LDS $\calL(A,B,C,D)$ is $(\delta,v)$-unobservable if $v$ is a unit vector such that for all integers $s \geq 0$,
\[
\norm{CA^s v} \leq \delta \,.
\]
\end{definition}

Now the key to proving an information-theoretic lower bound is the observation that when the input and noise distributions are Gaussian, the system measurements and control inputs $(u_0,...,u_T,y_0,...,y_T)$ are a Gaussian process.  Therefore, the joint distribution is uniquely determined by its covariance matrix.  On the other hand, we can explicitly compute the covariance matrix in terms of the system parameters $A,B,C,D$.  While there are several terms in the expression (see Fact~\ref{fact:obs-cov-formula}), the main point is that essentially all of the terms look like $CA^jB$.  Now, when the system is $(\delta, v)$-unobservable, we can replace $B$ with $B + vu^\top$ for an arbitrary vector $u \in \R^p$ while only changing expressions of the form $CA^jB$ by a little bit.  

Overall, we can show that the pair of LDS's $\calL = \calL(A,B,C,D)$ and $\calL' = \calL(A,B + vu^\top,C,D)$ are statistically close up to time $T \ll 1/\delta$. This means that no algorithm using $\ll 1/\delta$ length trajectories can distinguish the two systems and since their parameters are not close to equivalent up to similarity (for generic choices of parameters), the algorithm must incur large error.

\section{Formal Setup}
\label{sec:formal-setup}
In this section, we formally state the linear dynamical system model, and our assumptions.  

\begin{model}[Linear Dynamical System]
\label{model:LDS}
 Let $A \in \mathbb{C}^{n \times n}$, $B \in \mathbb{C}^{n \times p}$,  $C \in \mathbb{C}^{m \times n}$, and $D\in \mathbb{C}^{m \times p}$ be complex valued matrices. Let $\calD_0$, $\calD_{u}$, $\calD_{w}$, $\calD_{z}$ be distributions with mean zero. Then, a Linear Dynamical System, $\calL\Paren{A,B,C,D}$, is defined as follows: 
\begin{equation*}
\begin{split}
    x_{t+1} &= A x_{t} + B u_t + w_t ,\\
    y_{t} &= C x_t + D u_t + z_t ,
\end{split}
\end{equation*}
where $x_0 \sim \calD_{0}$, and for all $t\in\mathbb{N}$,  $u_t \sim \calD_{u} $, $w_t \sim \calD_{w}$ and $z_t \sim \calD_{z}$. 
\end{model}

We see only the sequence of observations $y_1, y_2, \dots , y_T$ up to some time $T$ and our goal is to learn the parameters of the system $A,B,C,D$.  We need some assumptions about the parameters and also the input and noise distributions which we discuss below (as otherwise the system may be degenerate and it may be information-theoretically impossible to learn, see Section~\ref{sec:lower-bound}).

\subsection{Assumptions on the system parameters}

We begin by ensuring that the 
linear dynamical system at hand is not degenerate.  This notion can be made precise by considering the \emph{observability matrix}: 
\begin{definition}[Observability Matrix]\label{def:observability}
For an integer $s$, define the matrix $O_s \in \R^{sm \times n}$ as
\begin{equation*}
    O_s =\begin{bmatrix} C \\ CA \\
    \vdots \\
    CA^{s-1} \end{bmatrix}.
\end{equation*}
\end{definition}

A LDS is \emph{observable} if for some $s$, the matrix $O_s$ has full column-rank. 
Similarly, we need to ensure that the control input is not degenerate, and only acts in a subspace that is not spanned by $A$. This is made precise by considering the \emph{controllability matrix}: 
\begin{definition}[Controllability Matrix]\label{def:controllability}
For an integer $s$, define the matrix $Q_s \in \R^{sp \times n}$ as
\begin{equation*}
    Q_s = \begin{bmatrix} B & AB & \ldots & A^{s-1} B\end{bmatrix}
\end{equation*}
\end{definition}
A LDS is \emph{controllable} is the controllability matrix has full row-rank. We note that we assume a quantitative strengthening of these two assumptions to $O_s$ and $Q_s$ having bounded condition number (and this is necessary, recall Theorem \ref{thm:informal-lower-bound}). 

\begin{definition}[Well-Behaved Linear Dynamical System]\label{def:system-assumptions}
We say a linear dynamical system $\calL(A,B,C,D)$ is \emph{well-behaved} if the following assumptions hold:
\begin{enumerate}
    \item \textbf{Non-trivial Controller.} The matrix $B$ satisfies $\norm{B}\geq 1$.
    \item \textbf{Non-trivial Measurement.} The matrix $C$ satisfies $\norm{C}\geq 1$.
    \item \textbf{Non-exposive System.} All eigenvalues of $A$ have magnitude at most $1$. 
    \item \textbf{Bounded Condition Number.} $O_s$ has full column-rank, $Q_s$ has full row-rank and for some integer $s$, and parameter $\kappa\geq1$,
    \begin{equation*}
        \begin{split}
            \sigma_{\max}(O_{2s})/\sigma_{\min}(O_s) &\leq \kappa, \\
            \sigma_{\max}(Q_{2s})/\sigma_{\min}(Q_s) &\leq \kappa.
        \end{split}
    \end{equation*}
\end{enumerate}
\end{definition}

\begin{remark}
Crucially, the above assumption is on the eigenvalues and not the singular values of $A$, which would be a far stronger assumption, as discussed in the introduction. 
\end{remark}

\begin{remark}
While the bounded condition number assumption is standard in the literature~\cite{fetzer1975observability, muller1972analysis}, in Section \ref{sec:lower-bound}, we show that a polynomial bound on the condition number is necessary in the sample complexity, even information-theoretically. As a consequence, it is impossible to learn an exponentially ill-conditioned system with polynomially bounded observations. Finally,  note that up to polynomial factors, it suffices to have a bound on $\sigma_{\max}(O_{t})/\sigma_{\min}(O_s)$ as long as $t/s > 1 + c$ for some positive constant $c$ (see Claim~\ref{claim:boundA}).  For simplicity, we wrote the above condition for $t = 2s$.
\end{remark}

\subsection{Assumptions on the distribution of the control and noise}

We consider the following assumptions over the control input, system and process noise distributions:

\begin{definition}[Distributional Assumptions]\label{def:distributional-assumptions}
For all $t \in [T]$, we assume that  $u_t \sim \calD_{u}$,  $w_t\sim \calD_{w}$ and $z_t \sim \calD_{z}$ are each sampled independently from the corresponding distributions. Additionally, $x_0 \sim \calD_0$. Then, 
\begin{itemize}
    \item \textbf{Mean Zero:} $\calD_{u}$, $\calD_{w}$,  $\calD_{z}$ and $\calD_{0}$ are all mean $0$ distribution. 
    \item \textbf{Isotropic and Hypercontractive Control:} The covariance of $\calD_u$,  $\Sigma_{\calD_u } = I$, and $\calD_u$ is $(4,2,K)$-hypercontractive for a fixed constant $K \geq 3$ (see Definition~\ref{def:hypercontractivity}).
    \item \textbf{Bounded Variance Noise:} For $\sigma_w, \sigma_z \geq 1$, the covariances of $\calD_w$ and $\calD_z$ satisfy  $ \Sigma_{\calD_w} \preceq \sigma_w I$ and $\Sigma_{\calD_z} \preceq \sigma_z I$.

    \item \textbf{Starting Point:}  The distribution $\calD_0$ has covariance $\Sigma_{\calD_0} \preceq \sigma_0 I$

\end{itemize}
\end{definition}

\begin{remark}\label{rem:not-iid}
We don't actually need that the $u_i, w_i, z_i$ are all drawn from the same distribution across different time-steps.  We only need that they are independent.  In other words, all of our results still hold if we allow for there to be different distributions $\calD_{u,t},\calD_{w,t}, \calD_{z,t}$ at each time-step that all satisfy the above assumptions. 
\end{remark}

\section{Preliminaries}

We begin with some notation and basic facts from linear algebra and probability. For a matrix $A \in \C^{n \times n}$, we use $\norm{A} = \norm{A}_{\textrm{op} } = \max_{\norm{u}=1 } \norm{Au}_2$ and $\norm{A}_F = \sqrt{ \sum_{ i,j \in [n]} \abs{A_{i,j}}^2  }$. We use the notation  $A^{\top}$ to denote the transpose when $A$ only has real entries. Further, for $A \in \C^{n \times m}$ such that $n \geq m$,  let $\textsf{SVD}(A)= U \Sigma V^\top$ denote the singular value decomposition of $A$, where $U \in \mathbb{C}^{n \times m}$ and $V^\top \in \mathbb{C}^{m \times m}$ are unitary matrices (see Definition~\ref{def:unitary-matrices}, and $\Sigma$ is a diagonal matrix, with the singular values denoted by $\sigma_1\geq \sigma_2 \geq \ldots \sigma_m\geq 0$. 

\subsection{Linear Algebra Background}

\begin{definition}[Unitary Matrices]
\label{def:unitary-matrices}
Given a symmetric matrix $U \in \mathbb{C}^{n\times n}$ we say  $U$ is a unitary matrix  if $U^\top U = U U^\top = I$.
\end{definition}

\begin{fact}[Operator Norm of Unitary Matrices]
\label{fact:submult-op}
If $Q \in \mathbb{C}^{n\times n}$ is a unitary matrix, $\norm{Q}_{\textrm{op}} = 1$.
\end{fact}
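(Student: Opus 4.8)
The plan is to unwind the definition of the operator norm directly and use the defining identity of a unitary matrix. Recall that $\norm{Q}_{\textrm{op}} = \max_{\norm{u}_2 = 1} \norm{Qu}_2$, so it suffices to show that $Q$ preserves the Euclidean norm of every unit vector, and then to check that the maximum is actually attained (i.e. it is not strictly less than $1$).

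First I would fix an arbitrary unit vector $u \in \mathbb{C}^n$ and compute $\norm{Qu}_2^2 = (Qu)^\top(Qu) = u^\top Q^\top Q u$. Invoking the defining property $Q^\top Q = I$ from Definition~\ref{def:unitary-matrices}, this collapses to $u^\top u = \norm{u}_2^2 = 1$. Hence $\norm{Qu}_2 = 1$ for every unit vector $u$, which immediately gives $\norm{Q}_{\textrm{op}} = \max_{\norm{u}_2=1}\norm{Qu}_2 = 1$. Note that because the value $1$ is achieved for \emph{every} unit vector (not just the maximizer), there is no separate argument needed to rule out $\norm{Q}_{\textrm{op}} < 1$; the set of values $\{\norm{Qu}_2 : \norm{u}_2 = 1\}$ is exactly $\{1\}$.

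The only point requiring a moment's care is the first equality $\norm{Qu}_2^2 = (Qu)^\top(Qu)$ in the complex setting, together with consistency of the $\top$ notation used in Definition~\ref{def:unitary-matrices}: one should read $\top$ here as the conjugate transpose so that $v^\top v = \sum_i |v_i|^2 = \norm{v}_2^2$, matching the norm convention $\norm{A}_F = \sqrt{\sum_{i,j}|A_{i,j}|^2}$ fixed at the start of the Preliminaries. With that reading, the computation above is literally valid as written. There is essentially no obstacle in this proof; the ``hard part'', such as it is, is purely bookkeeping about the conjugate-transpose convention, and once that is pinned down the argument is a one-line calculation.
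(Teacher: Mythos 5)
Your proof is correct, and it is the standard one-line argument: the paper states this as a background Fact without supplying any proof, so there is nothing to compare against on the paper's side. Your care about reading $\top$ as the conjugate transpose is warranted, since Definition~\ref{def:unitary-matrices} writes $U^\top U = UU^\top = I$ for a complex matrix while the Preliminaries reserve $A^\top$ for real $A$; with that reading, $\norm{Qu}_2^2 = u^* Q^* Q u = \norm{u}_2^2$ gives the claim exactly as you argue.
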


\begin{fact}[Sub-Multiplicativity of Operator Norms]
Given matrices $A \in \mathbb{C}^{n \times d}, B \in \mathbb{C}^{d \times m}$, $\norm{AB}_{\textrm{op}} \leq \norm{A}_{\textrm{op}}\cdot \norm{B}_{\textrm{op}}$.
\end{fact}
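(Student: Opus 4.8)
The plan is to reduce the matrix inequality to a statement about the action on vectors, using the variational characterization $\norm{M}_{\textrm{op}} = \max_{\norm{u}=1}\norm{Mu}_2$ recalled just above. First I would fix an arbitrary unit vector $u \in \mathbb{C}^{m}$ and consider the vector $Bu \in \mathbb{C}^{d}$. By definition of the operator norm of $B$, $\norm{Bu}_2 \leq \norm{B}_{\textrm{op}}$ since $u$ has unit norm; more generally, for \emph{any} vector $v$ one has $\norm{Bv}_2 \leq \norm{B}_{\textrm{op}}\norm{v}_2$. This homogeneity step is the one small point worth spelling out: it follows by applying the unit-vector bound to $v/\norm{v}_2$ and rescaling when $v \neq 0$, and is trivial when $v = 0$.

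Next I would apply the analogous homogeneous bound for $A$ to the particular vector $Bu \in \mathbb{C}^{d}$, obtaining $\norm{A(Bu)}_2 \leq \norm{A}_{\textrm{op}}\norm{Bu}_2$. Chaining the two inequalities and using associativity $(AB)u = A(Bu)$ gives $\norm{(AB)u}_2 \leq \norm{A}_{\textrm{op}}\norm{B}_{\textrm{op}}$ for every unit vector $u \in \mathbb{C}^{m}$. Taking the maximum over all such $u$ on the left-hand side yields $\norm{AB}_{\textrm{op}} \leq \norm{A}_{\textrm{op}}\cdot\norm{B}_{\textrm{op}}$, which is the claim. (Alternatively one could argue via the SVDs of $A$ and $B$, but the vector argument is shorter.)

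There is essentially no obstacle here; the only part requiring a sentence of care is the passage from the unit-vector bound $\norm{Mu}_2 \leq \norm{M}_{\textrm{op}}$ to the fully homogeneous bound $\norm{Mv}_2 \leq \norm{M}_{\textrm{op}}\norm{v}_2$, together with the observation that associativity lets the two bounds compose. The dimensions line up throughout ($u \in \mathbb{C}^{m}$, $Bu \in \mathbb{C}^{d}$, $ABu \in \mathbb{C}^{n}$), so the rectangular shapes cause no difficulty.
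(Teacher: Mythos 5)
Your proof is correct and complete: the chain $\norm{(AB)u}_2 = \norm{A(Bu)}_2 \leq \norm{A}_{\textrm{op}}\norm{Bu}_2 \leq \norm{A}_{\textrm{op}}\norm{B}_{\textrm{op}}$ for unit $u$, followed by maximizing over $u$, is the standard argument, and your care with the homogeneous form of the bound is appropriate. The paper states this fact without proof, so there is no alternative approach to compare against.
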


\begin{fact}[Cayley-Hamilton Theorem]
\label{fact:cayley-hamilton}
Given a square matrix $A \in \mathbb{C}^{n\times n}$, the characteristic polynomial of $A$ is defined as $p_{A}\Paren{\lambda} = \textsf{det}\Paren{\lambda I - A} = \lambda^n + c_{n-1} \lambda^{n-1}+ \ldots c_1 \lambda + c_0$, where the coefficients, $c_i$, are scalar. Then, consider the matrix valued polynomial $p_{A}\Paren{A}= A^n + c_{n-1} A^{n-1}+ \ldots c_1 A + c_0 I$. The Cayley-Hamilton theorem states $p_{A}\Paren{A}=0$. 
\end{fact}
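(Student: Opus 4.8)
The plan is to give the classical \emph{adjugate} proof, which is elementary and entirely self-contained. Set $M(\lambda) = \lambda I - A$ and let $\mathrm{adj}(M(\lambda))$ be its adjugate (the transpose of the cofactor matrix). By the cofactor expansion of the determinant one has the polynomial identity $M(\lambda)\,\mathrm{adj}(M(\lambda)) = \det(M(\lambda))\, I = p_A(\lambda)\, I$. Each entry of $\mathrm{adj}(M(\lambda))$ is an $(n-1)\times(n-1)$ minor of $M(\lambda)$ and hence a polynomial in $\lambda$ of degree at most $n-1$, so we may write $\mathrm{adj}(M(\lambda)) = \sum_{k=0}^{n-1} \lambda^k B_k$ for constant matrices $B_0,\dots,B_{n-1} \in \mathbb{C}^{n \times n}$.

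Next I would expand both sides of this identity as polynomials in $\lambda$ with matrix coefficients: the left-hand side is $\sum_{k} \bigl(\lambda^{k+1} B_k - \lambda^k A B_k\bigr)$ and the right-hand side is $\sum_{k=0}^n c_k \lambda^k I$ with the convention $c_n = 1$. Matching the coefficient matrices of each power $\lambda^k$ (using the boundary conventions $B_{-1} = B_n = 0$) yields, for every $k \in \{0,\dots,n\}$, the relation $B_{k-1} - A B_k = c_k I$.

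The final step is a telescoping cancellation: multiply the $k$-th relation on the left by $A^k$ and sum over $k = 0,\dots,n$. The right-hand side becomes $\sum_{k=0}^n c_k A^k = p_A(A)$, while the left-hand side collapses to $A^0 B_{-1} - A^{n+1} B_n = 0$. Hence $p_A(A) = 0$, as claimed.

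The only real subtlety --- and the reason the proof is not a one-liner --- is that one cannot simply ``substitute $\lambda = A$'' into $p_A(\lambda) I = \det(\lambda I - A) I$ to conclude $\det(A - A) = 0$: the scalar identity holds because $\lambda$ was treated as a central variable when the determinant was expanded, and naive substitution of a matrix is not a valid ring homomorphism here. The actual content is the bookkeeping of matrix coefficients together with the telescoping sum, where the side on which one multiplies by $A^k$ must be chosen consistently before summing. An alternative route is to verify the identity first for diagonalizable $A = VDV^{-1}$, where $p_A(A) = V p_A(D) V^{-1}$ and $p_A(D)$ is diagonal with entries $p_A(\lambda_i) = 0$, and then extend to all of $\mathbb{C}^{n\times n}$ by density of diagonalizable matrices together with continuity of $A \mapsto p_A(A)$; the adjugate argument is preferable here since it avoids any topological input.
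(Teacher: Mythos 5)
Your proof is correct. The paper states Cayley--Hamilton as a background fact without proof, so there is no argument of theirs to compare against; the adjugate/telescoping argument you give is the standard complete proof, the coefficient-matching relations $B_{k-1} - AB_k = c_k I$ and the telescoping sum $\sum_{k=0}^{n}\bigl(A^k B_{k-1} - A^{k+1}B_k\bigr) = 0$ both check out, and your caveat about why one cannot naively substitute $\lambda = A$ is exactly the right point to flag.
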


\begin{fact}\label{claim:upper-triangular}
For any matrix $A \in \C^{n \times n}$, there is a unitary matrix $Q$ such that $Q^{-1}AQ$ is upper triangular.
\end{fact}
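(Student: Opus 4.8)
The plan is to prove this by induction on $n$ via the classical Schur triangularization argument, relying only on the fact that $\C$ is algebraically closed, so that $A$ always has at least one eigenvector.

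The base case $n = 1$ is immediate, since every $1 \times 1$ matrix is already upper triangular and $Q = [1]$ is unitary. For the inductive step I would first extract an eigenvalue: the characteristic polynomial $p_A(\lambda) = \det(\lambda I - A)$ has a root $\lambda_1 \in \C$ by the Fundamental Theorem of Algebra, so $A$ has a unit eigenvector $v_1$ with $A v_1 = \lambda_1 v_1$. I would then complete $v_1$ to an orthonormal basis $v_1, \dots, v_n$ of $\C^n$ by Gram--Schmidt, and let $Q_1 = [\,v_1 \mid \cdots \mid v_n\,]$, which is unitary by construction. The key observation is that, because $v_1$ is an eigenvector and the columns of $Q_1$ are orthonormal, conjugating by $Q_1$ zeroes out the first column below the diagonal:
\[
Q_1^{-1} A Q_1 \;=\; \begin{bmatrix} \lambda_1 & b^\top \\ 0 & A' \end{bmatrix},
\]
for some $b \in \C^{n-1}$ and some $A' \in \C^{(n-1)\times(n-1)}$, where we use $Q_1^{-1} = Q_1^\top$ since $Q_1$ is unitary.

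Finally I would apply the inductive hypothesis to $A'$, obtaining a unitary $Q' \in \C^{(n-1)\times(n-1)}$ with $(Q')^{-1} A' Q'$ upper triangular, embed it as the block-diagonal matrix $Q_2 = \begin{bmatrix} 1 & 0 \\ 0 & Q' \end{bmatrix}$ (which is again unitary), and set $Q = Q_1 Q_2$. A product of unitary matrices is unitary (directly from Definition~\ref{def:unitary-matrices}), and a short block computation shows that $Q^{-1} A Q = Q_2^{-1}(Q_1^{-1} A Q_1)Q_2$ is upper triangular, completing the induction. The only steps requiring any care are verifying the block structure produced by conjugation by $Q_1$ and checking that assembling $Q_2$ from $Q'$ preserves unitarity; beyond that there is no real obstacle, as this is an entirely classical result.
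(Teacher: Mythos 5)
Your proof is correct and follows the same Schur-triangularization argument as the paper: extract a unit eigenvector, complete it to an orthonormal basis to zero out the first column below the diagonal, and recurse on the $(n-1)\times(n-1)$ block. Your write-up is in fact a bit more explicit than the paper's about assembling the final $Q$ as a product of unitaries, but the underlying idea is identical.
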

\begin{proof}
$A$ must have some eigenvector, say $v$.  Also normalize $v$ so that it is a unit vector.  Let $Q_0$ be a matrix whose first column is $v$ and whose columns form an orthonormal basis of $\C^n$.  Then $A' = Q_0^{-1}AQ_0$ has all entries in the first column equal to $0$ except possibly the first entry.  Now it suffices to compute a unitary matrix in $\C^{(n-1) \times (n-1)}$ that transforms the $(n-1) \times (n-1)$ submatrix of $A'$ (excluding the first row and column) into an upper triangular matrix but this can be done by induction.
\end{proof}

We also establish the following key lemma to upper bound the operator norm of matrix polynomials, when the underlying matrix has bounded eigenvalues. 

\begin{lemma}[Opertor Norm of a matrix with bounded Eigenvalues ]\label{claim:powering-bound}
Let $A \in \C^{n \times n}$ be a matrix and assume that all eigenvalues of $A$ have magnitude at most $1$.  Then for any integer $L$,
\[
\norm{A^L} \leq n \cdot (2(1 + \norm{A}) L)^n \,.
\]
\end{lemma}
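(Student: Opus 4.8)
The plan is to use the Schur/Jordan-type triangularization from Fact~\ref{claim:upper-triangular} to reduce to the case where $A$ is upper triangular, and then bound powers of an upper triangular matrix with diagonal entries of magnitude at most $1$ by a direct combinatorial count on walks through the entries. Write $A = Q T Q^{-1}$ with $Q$ unitary and $T$ upper triangular; since $\norm{Q} = \norm{Q^{-1}} = 1$ by Fact~\ref{fact:submult-op}, we have $\norm{A^L} = \norm{Q T^L Q^{-1}} \le \norm{T^L}$, and also $\norm{T} = \norm{Q^{-1} A Q} \le \norm{A}$. The eigenvalues of $A$ are exactly the diagonal entries of $T$, so each satisfies $|T_{ii}| \le 1$, and every off-diagonal entry satisfies $|T_{ij}| \le \norm{T} \le \norm{A}$.

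Next I would estimate the entries of $T^L$ directly. The $(i,j)$ entry of $T^L$ is a sum over all length-$L$ walks $i = i_0 \le i_1 \le \cdots \le i_L = j$ (monotone because $T$ is upper triangular) of $\prod_{k=1}^{L} T_{i_{k-1} i_k}$. Along such a walk, each ``stay'' step contributes a diagonal entry of magnitude $\le 1$ and each ``move'' step contributes an off-diagonal entry of magnitude $\le \norm{A}$; since the index only increases and lies in $[n]$, there are at most $n-1$ move steps, so each walk contributes at most $\max(1,\norm{A})^{n-1} \le (1+\norm{A})^{n}$ in absolute value. The number of monotone walks of length $L$ on $n$ states is $\binom{L+n-1}{n-1} \le (L+n)^{n} \le (2(1+\norm{A})L)^n / (\text{something})$, which after folding in the per-entry bound gives $|(T^L)_{ij}| \le (2(1+\norm{A})L)^n$ with room to spare. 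Summing the contributions of all $n^2$ entries (or more cleanly, bounding $\norm{T^L} \le n \max_{i,j}|(T^L)_{ij}|$ via $\norm{M} \le n \max|M_{ij}|$, or $\norm{M}\le \norm{M}_F \le n\max|M_{ij}|$) yields the claimed bound $\norm{A^L} \le n \cdot (2(1+\norm{A})L)^n$.

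The main thing to be careful about is the bookkeeping of constants: I need the product of the walk-count bound and the per-walk entry bound to fit inside $(2(1+\norm{A})L)^n$, so I would be slightly generous in the intermediate steps — e.g. bound the number of walks by $(L+1)^n$ (choosing the $\le n$ positions where the index jumps, with repetition) and the per-walk product by $(1+\norm{A})^n$, giving $((L+1)(1+\norm{A}))^n$ per entry and an extra factor $n$ from summing entries, all comfortably within $n(2(1+\norm{A})L)^n$ for $L \ge 1$. The only genuinely substantive point — and the reason the statement is interesting rather than trivial — is that the bound depends only on the eigenvalues being bounded by $1$, not the singular values; this is exactly what the triangularization buys us, since it converts the spectral hypothesis into an entrywise bound on the diagonal of $T$ while the (possibly large) norm of $A$ only enters through the at-most-$n$ off-diagonal factors per walk. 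Handling the degenerate edge cases ($L < n$, or $\norm{A}$ very small) is immediate since the bound is monotone and the trivial estimate $\norm{A^L}\le\norm{A}^L$ already suffices there.
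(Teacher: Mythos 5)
Your proposal is correct and follows essentially the same route as the paper: Schur triangularization via Fact~\ref{claim:upper-triangular} to reduce to an upper triangular matrix with diagonal entries of magnitude at most $1$, an entrywise bound on $(T^L)_{ij}$ by counting monotone walks (at most $\binom{L-1+n}{n}$ of them, each contributing at most $(1+\norm{A})^n$ since a monotone walk uses at most $n$ off-diagonal steps), and a final factor of $n$ to pass from entrywise to operator norm. Your explicit justification of that last step via $\norm{M}\le\norm{M}_F$ is a small point the paper leaves implicit, but the argument is the same.
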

\begin{proof}
By Fact~\ref{claim:upper-triangular} and Fact~ \ref{fact:submult-op}, without loss of generality we can assume that $A$ is upper triangular.  Then, all of its diagonal entries are eigenvalues so all of its diagonal entries have magnitude at most $1$,  Now we bound the magnitude of all entries of $A^L$.  Consider the entry indexed by $i,j$.  Clearly $i \leq j$ or the corresponding entry is $0$.  Next, by definition of $(A^L)_{i,j}$, 
\begin{equation}
\begin{split}
|(A^L)_{ij}| & = \sum_{\substack{i_1, \dots , i_L \\ i \leq i_1 \leq \dots \leq i_{L-1} \leq j} } A_{ii_1}\cdots A_{i_{L-1}j} \\
& \leq \sum_{\substack{i_1, \dots , i_L \\ i \leq i_1 \leq \dots \leq i_{L-1} \leq j} } |A_{ii_1}| \cdots |A_{i_{L-1}j}| \\
& \leq \binom{L - 1 + n}{n}(1 + \norm{A})^n \\
& \leq (2 (1 + \norm{A}) L)^n \,.\end{split}
\end{equation}
where the second inequality counts the number of paths and uses that each path can contain at most $n$ entries strictly above the diagonal.
\end{proof}

\subsection{Probability Background}

Next, we recall the definition of $\ell_{4\to 2}$-hypercontractivity for distributions. 

\begin{definition}[Hypercontractivity]
\label{def:hypercontractivity}
We say a distribution $\mathcal{D}$ on $\R^n$ is $(4,2,K)$-hypercontractive if for any vector $v \in \R^n$, we have $\EE_{x\sim \calD}[\la v, x \ra^4] \leq K \cdot \EE_{x\sim\calD}[\la v,x \ra^2]^2$.
\end{definition}

We note that \textit{hypercontractivity} is a very mild assumption on the concentration behavior of $4$-th moments of a distribution, and several well-studied families of distributions, including all sub-Gaussian, sub-Exponential and log-concave distributions satisfy this assumption.

\begin{lemma}[Linear Transform of a Hypercontractive Distribution]
\label{claim:sum-hypercontractive}
Let $\{u_i\}_{i \in[t]}$ be $t$ iid samples from a distribution $\calD$ that is $(4,2,K)$-hypercontractive (see Definition~\ref{def:hypercontractivity}). Then, for any matrices $M_1, \dots , M_t \in \R^{m \times p}$, the random variable $\sum_{i \in [t]}  M_i u_i + \dots + M_tu_t$ is $(4,2,K)$-hypercontractive.
\end{lemma}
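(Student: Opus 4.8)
The plan is to reduce the claim about the random vector $S = \sum_{i\in[t]} M_i u_i$ to a one-dimensional hypercontractivity statement about a single sample, and then handle the sum by expanding the fourth moment and discarding cross terms. Fix an arbitrary test vector $v \in \R^m$. We want to bound $\EE[\la v, S\ra^4]$ by $K \cdot \EE[\la v,S\ra^2]^2$. Writing $\la v, S\ra = \sum_{i\in[t]} \la v, M_i u_i\ra = \sum_{i\in[t]} \la M_i^\top v, u_i\ra$, we set $a_i = \la M_i^\top v, u_i\ra$, so that each $a_i$ is a scalar random variable, the $a_i$ are independent (since the $u_i$ are), and each has mean zero. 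The first key step is to observe that single-sample hypercontractivity of $\calD$ gives, for each $i$, $\EE[a_i^4] = \EE_{u_i\sim\calD}[\la M_i^\top v, u_i\ra^4] \leq K\,\EE[\la M_i^\top v, u_i\ra^2]^2 = K\,(\EE[a_i^2])^2$.

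Next I would expand $\EE\big[(\sum_i a_i)^4\big]$ into monomials $\EE[a_{i_1}a_{i_2}a_{i_3}a_{i_4}]$. By independence and mean-zero, any monomial in which some index appears exactly once vanishes, so the only surviving terms are the "all four equal" terms $\EE[a_i^4]$ and the "two pairs" terms $\EE[a_i^2]\EE[a_j^2]$ (with $i\neq j$), the latter appearing with multiplicity $\binom{4}{2}=3$ from the multinomial coefficient. Thus
\begin{equation*}
    \EE\Big[\Big(\sum_i a_i\Big)^4\Big] = \sum_i \EE[a_i^4] + 3\sum_{i\neq j}\EE[a_i^2]\EE[a_j^2].
\end{equation*}
Applying the single-sample bound to the first sum gives $\sum_i \EE[a_i^4] \leq K\sum_i (\EE[a_i^2])^2$. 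Since $K\geq 3$, both terms on the right are bounded by $K$ times a term in the expansion of $\big(\sum_i \EE[a_i^2]\big)^2 = \sum_i (\EE[a_i^2])^2 + \sum_{i\neq j}\EE[a_i^2]\EE[a_j^2]$. Hence
\begin{equation*}
    \EE\Big[\Big(\sum_i a_i\Big)^4\Big] \leq K\Big(\sum_i (\EE[a_i^2])^2 + \sum_{i\neq j}\EE[a_i^2]\EE[a_j^2]\Big) = K\Big(\sum_i \EE[a_i^2]\Big)^2 = K\,\big(\EE[\la v,S\ra^2]\big)^2,
\end{equation*}
where the last equality again uses independence and mean-zero to compute $\EE[\la v,S\ra^2] = \sum_i \EE[a_i^2]$. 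Since $v$ was arbitrary, this is exactly $(4,2,K)$-hypercontractivity of $S$.

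I do not expect a genuine obstacle here — the only mild subtlety is making sure the constant does not degrade: the three "cross" terms carry a factor $3$, which is absorbed precisely because the definition requires $K\geq 3$. It is worth double-checking the bookkeeping of multinomial coefficients (the coefficient of $a_i^2 a_j^2$ in the expansion of $(\sum a_i)^4$ is $\binom{4}{2,2}=6$, but summing over ordered pairs $i\neq j$ rather than unordered halves this to $3$ per unordered pair, or equivalently the stated identity holds as written with the sum over ordered pairs). The reduction to scalars via $a_i = \la M_i^\top v, u_i\ra$ and the vanishing of odd monomials by independence are the load-bearing observations; everything else is routine.
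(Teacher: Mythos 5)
Your proof is correct and matches the paper's argument essentially line for line: both expand $\EE[\la v, \sum_i M_iu_i\ra^4]$, kill the odd cross terms by independence and mean-zero, apply single-sample hypercontractivity to the diagonal fourth-moment terms, and absorb the paired cross terms (coefficient $3$ over ordered pairs, equivalently $6$ over unordered pairs as the paper writes it) using $K \geq 3$. Your bookkeeping of the multinomial coefficients is right, so there is nothing to add.
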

\begin{proof}
Fix a vector $v \in \R^m$.  Then we have
\begin{equation*}
\begin{split}
\EE[ \la v, M_1u_1 + \dots + M_tu_t\ra^4 ] &= \sum_{1 \leq i \leq t}\EE[ (v^TM_iu_i)^4] + 6\sum_{1 \leq i < i' \leq t} \EE[ (v^TM_iu_i)^2]\EE[ (v^TM_{i'}u_{i'})^2] \\ &\leq K \sum_{1 \leq i \leq t}\EE[ (v^TM_iu_i)^2]^2 + 6\sum_{1 \leq i < i' \leq t} \EE[ (v^TM_iu_i)^2]\EE[ (v^TM_{i'}u_{i'})^2] \\ &\leq K \left( \sum_{1 \leq i \leq t} \EE[(v^TM_iu_i)^2 ] \right)^2 \\ &= K\EE[ \la v, M_1u_1 + \dots + M_tu_t\ra^2 ]^2 \,.
\end{split}
\end{equation*}
By definition, this means that $M_1u_1 + \dots + M_tu_t$ is $(4,2,K)$-hypercontractive, as desired.
\end{proof}

Next, we obtain a weak anti-concentration bound via a Paley–Zygmund like inequality:

\begin{lemma}[Weak Anti-Concentration via Hypercontractivity]\label{claim:anti-concentration}
Let $z$ be a real-valued random variable such that $\EE[z] = 0, \EE[z^2] \geq 1, \EE[z^4] \leq K$ for some constant $K \geq 3$.  Then for any real number $\beta$, 
\[
\Pr[ |z - \beta| \leq 0.1] \leq 1 - \frac{1}{10K} \,.  
\]
\end{lemma}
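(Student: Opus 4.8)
The plan is to establish a lower bound on the variance of $z$ restricted to the event that $|z-\beta|$ is large, and then turn that into an upper bound on the probability of the complementary event $\{|z-\beta|\le 0.1\}$. Concretely, I would start from the identity $\EE[z^2] = \EE[(z-\beta)^2] + 2\beta\EE[z] - \beta^2 = \EE[(z-\beta)^2] - \beta^2$, using $\EE[z]=0$; hence $\EE[(z-\beta)^2] = \EE[z^2] + \beta^2 \ge 1$. So it suffices to show that if $\Pr[|z-\beta|\le 0.1]$ were too close to $1$, the random variable $w := z-\beta$ could not simultaneously have $\EE[w^2]\ge 1$ and a fourth moment controlled in terms of $K$.

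The key step is a Paley–Zygmund-type argument applied to $w^2$. Write $\EE[w^2] = \EE[w^2 \mathbb{I}(|w|\le 0.1)] + \EE[w^2 \mathbb{I}(|w| > 0.1)] \le 0.01 + \EE[w^2\mathbb{I}(|w|>0.1)]$. By Cauchy–Schwarz, $\EE[w^2\mathbb{I}(|w|>0.1)] \le \sqrt{\EE[w^4]}\sqrt{\Pr[|w|>0.1]}$. Now I need to bound $\EE[w^4] = \EE[(z-\beta)^4]$ in terms of $K$; expanding and using $\EE[z]=0$, $\EE[z^2]\le \sqrt{\EE[z^4]}\le\sqrt{K}$, $\EE[z^4]\le K$, together with the constraint relating $\beta^2$ to the moments (from $\EE[(z-\beta)^2]\ge 1$ one does not get an upper bound on $\beta$ directly, so I will instead bound things in terms of $\EE[w^2]$ and $K$ via hypercontractivity-like manipulations, or simply observe that $w$ itself, being a shift of $z$, has $\EE[w^4] \le C(K)\EE[w^2]^2$ for an absolute constant once we use $\EE[z]=0$; alternatively one can just keep $\EE[w^4]$ symbolic and use $\EE[w^2]\ge 1$). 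Combining, $1 \le \EE[w^2] \le 0.01 + \sqrt{\EE[w^4]}\sqrt{\Pr[|w|>0.1]}$, which rearranges to $\Pr[|w|>0.1] \ge (0.99)^2/\EE[w^4]$, and therefore $\Pr[|z-\beta|\le 0.1] \le 1 - (0.99)^2/\EE[w^4]$. The final task is to plug in a clean bound $\EE[w^4] \le c\,K$ for an absolute constant $c$ so that $(0.99)^2/(cK) \ge 1/(10K)$, i.e. $c \le 9.8$, which gives the claimed constant.

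The main obstacle I anticipate is controlling $\EE[(z-\beta)^4]$ by a constant multiple of $K$ without an a priori bound on $|\beta|$. The resolution is to normalize: since the event $\{|z-\beta|\le 0.1\}$ only gets harder to make probable as $\beta$ moves away from $0$, the worst case is essentially $\beta$ near the "center" of $z$, and more carefully, one can argue that if $|\beta|$ is very large then $\Pr[|z-\beta|\le 0.1]$ is already tiny by Chebyshev (since $\EE[z^2]\le\sqrt K$ forces $z$ to concentrate near $0$ on the scale $K^{1/4}$), so WLOG $|\beta| = O(K^{1/4})$, and then $\EE[(z-\beta)^4] \le 8(\EE[z^4] + \beta^4) \le 8(K + O(K)) = O(K)$. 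Chasing the constants through this dichotomy — small $|\beta|$ handled by the Paley–Zygmund computation, large $|\beta|$ handled by Chebyshev — and checking that both branches yield a bound of the form $1 - \Omega(1/K)$ with the right numerical constant $1/(10K)$ is the part that needs the most care, though it is entirely elementary.
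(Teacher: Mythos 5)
Your route is genuinely different from the paper's and is sound in outline, but as written it does not quite deliver the stated constant $1/(10K)$. The paper argues by contradiction with a case split on $\beta$: for $\beta\geq 0.3$ it uses $\EE[z]=0$ and Jensen's inequality on the conditional law of $z$ given $z\leq 0.2$ to force $\EE[z^4]>K$; for $\beta\leq 0.3$ it observes that on the event $|z-\beta|\leq 0.1$ one has $z^2\leq 0.16$, so $\EE[z^2]\geq 1$ pushes mass onto the complement, and a Cauchy--Schwarz/Jensen step gives $\EE[z^4]\geq 0.64/(1-q)$, hence $q\leq 1-0.64/K$. Your second branch is essentially this same Paley--Zygmund computation applied to $w=z-\beta$ instead of to $z$, which elegantly removes the paper's first case --- but at the price of having to control $\EE[w^4]$, which reintroduces $\beta$.

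The quantitative issue is in that last step. Your dichotomy forces the ``small $\beta$'' branch to cover $|\beta|$ up to roughly $K^{1/4}$ (Chebyshev with $\EE[z^2]\leq\sqrt{K}$ cannot kill larger balls than that), and there $\EE[w^4]=\EE[z^4]-4\beta\EE[z^3]+6\beta^2\EE[z^2]+\beta^4$ can be as large as about $12K$--$14K$: the term $6\beta^2\EE[z^2]$ alone contributes up to $6K$. So $(0.99)^2/\EE[w^4]$ only yields $1-\Omega(1/K)$ with a constant near $1/(14K)$, not $1/(10K)$; your target $c\leq 9.8$ is unattainable in this branch. The fix stays entirely inside your framework: do not discard the $\beta^2$ in $\EE[w^2]=\EE[z^2]+\beta^2$. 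Paley--Zygmund gives $\Pr[|w|>0.1]\geq(\EE[z^2]+\beta^2-0.01)^2/\EE[w^4]$, and with $\sigma^2:=\EE[z^2]\in[1,\sqrt{K}]$ one checks directly (using $|\EE[z^3]|\leq K^{3/4}$, $K\geq 3$, and AM--GM on the cross term) that
\[
0.98\,(\sigma^2+\beta^2)^2 \;\geq\; \tfrac{1}{10K}\left(K+4|\beta|K^{3/4}+6\beta^2\sigma^2+\beta^4\right)
\]
for \emph{all} $\beta$, since the left side carries matching $\sigma^4$, $\sigma^2\beta^2$, and $\beta^4$ terms with much larger coefficients. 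This makes the Chebyshev branch unnecessary and recovers $\Pr[|z-\beta|\leq 0.1]\leq 1-1/(10K)$. (For the paper's downstream use any $1-\Omega(1/K)$ bound would in fact suffice, but the lemma as stated claims $1/(10K)$, so the refinement is needed to prove it verbatim.)
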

\begin{proof}
Clearly we must have $K \geq 1$.  Assume for the sake of contradiction that the desired inequality is false.  Without loss of generality we have $\beta \geq 0$.  First consider the case where $\beta \geq 0.3$.  Let $p$ be the probability that $z \leq 0.2$.  We must have $p \leq 1/(10 K)$.  Furthermore, since $\EE[z] = 0$, we must have
\[
0 = \EE[z] = p \EE[z \big| z \leq 0.2]  + ( 1 - p) \EE[z \big| z > 0.2] \geq p \EE[z \big| z \leq 0.2] + 0.2(1 - p)
\]
which rearranges as
\[
\EE[z \big| z \leq 0.2] \leq \frac{-0.2(1 - p)}{p} \,.
\]
Thus, by Jensen's inequality (since $z^4$ is convex), this implies that 
\[
\EE[z^4] \geq p \EE[z^4 \big| z \leq 0.2] \geq  p\left(\frac{0.2(1 - p)}{p} \right)^4 > K
\]
which is a contradiction.  Now it remains to consider the case where $\beta \leq 0.3$.  Then let $q$ be the probability that $|z - \beta| \leq 0.1$.  We have
\begin{align*}
\EE[z^2] &= q\EE\left[z^2 \big|\; |z - \beta| \leq 0.1 \right] + (1 - q) \EE\left[z^2 \big|\; |z - \beta| > 0.1\right] \\ &\leq q (\beta + 0.1)^2 + (1 - q) \EE\left[z^2 \big|\; |z - \beta| > 0.1 \right] \,.
\end{align*}
Thus, since $\EE[z^2] \geq 1$, we must have
\[
\EE\left[z^2 \big|\; |z - \beta| > 0.1 \right] \geq \frac{0.8}{1 - q} \,.
\]
Thus, by convexity, we must have
\[
\EE[z^4] \geq (1 - q) \cdot \left( \frac{0.8}{1 - q}  \right)^2  = \frac{0.64}{1 - q}
\]
and combining with the fact that $\EE[z^4] \leq K$, we deduce that  $q \leq 1 - \frac{1}{10K}$ and we are done.
\end{proof}

\subsection{Linear Dynamical Systems Background}

Next, we establish some basic definitions and identities that we utilize throughout. We begin with the definition of the Markov parameters of a LDS.

\begin{definition}[Markov Parameters]
\label{def:markov-params}
Given a linear dynamical system, $\calL\Paren{ A,B, C,D}$, and an integer $T\geq 1$, the Markov Parameter matrix $G\in \R^{m \times Tp} $ is defined as the following block matrix:
\begin{equation*}
    G = \begin{bmatrix}
    D &  CB &  CAB &  \ldots & CA^{T-2}B
    \end{bmatrix}. 
\end{equation*}
\end{definition}

It will be important to consider linear combinations of the observations $y_t$.  In particular, for different integers $k$, we will consider linear combinations of the form $\hat{y}_{t+k} = y_{t+k} - \sum_{j =1}^s \alpha_j y_{t-j}$ where $\alpha_1, \dots , \alpha_s \in \R^{m \times m}$ are $m \times m$ matrices.  To ease notation, it will be useful to consider the following matrix polynomial.  

\begin{definition}[Matrix Polynomial]\label{def:matrix-polynomial}
For $\alpha = (\alpha_1, \dots , \alpha_s)$ where $\alpha_1, \dots , \alpha_s \in \R^{m \times m}$ are matrices, define the matrix polynomial $F_{\alpha,k}: \R^{n \times n} \rightarrow \R^{m \times n}$
\[
F_{\alpha,k}(X)  = CX^{s + k} - \alpha_1 CX^{s-1} - \dots - \alpha_s C \,.
\]
Also for $i = 0,1, \dots , k+s$, let
 \[F_{\alpha,k}^{(i)}(X) = \begin{cases} 
      CX^{i} &  0 \leq i \leq k\\
      CX^{i} - \sum_{j = 1}^{i-k} \alpha_jCX^{i-k-j}  & k+1 \leq i \leq k+s 
   \end{cases}\]

\end{definition}

We now have the following identities.

\begin{fact}[Algebraic Identities for LDS's]\label{fact:formula}

Let  $\calL\Paren{A,B,C,D }$ be a Linear Dynamical System (see Definition \ref{model:LDS}). Then, for any $t \in \mathbb{N}$, 
\begin{equation*}
    y_t = \sum_{i = 1}^{t} \Paren{  C A^{i-1} B   u_{t-i}  + C A^{i-1}w_{t-i} } + CA^{t}x_0 +  Du_t + z_{t}  ,
\end{equation*}
Further, given $k \in \mathbb{N}$ and $\alpha_1, \alpha_2, \ldots , \alpha_s \in \mathbb{R}^{m \times m}$, let $\hat{y}_{t+k} = y_{t+k} - \sum_{j =1}^s \alpha_j y_{t-j}$. Then, 
\begin{equation*}
\begin{split}
    \wh{y}_{t + k} = & \left( z_{t + k}  - \sum_{ j = 1}^s \alpha_j z_{t-j} \right) + \left( D u_{t+k}   - \sum_{ j = 1 }^s \alpha_j Du_{t-j} \right) +   \Paren{  F_{\alpha, k}(A) A^{t - s}  } x_0   \\ &  + \sum_{i = 1}^{k+s}  F_{\alpha,k}^{(i-1)}(A)(Bu_{t+k- i} + w_{t+k  - i}) + \sum_{i = k+s + 1}^{t + k } F_{\alpha, k }(A) A^{i  - (k+s + 1)}(Bu_{t+k- i} + w_{t+k  - i}).
\end{split}
\end{equation*}
\end{fact}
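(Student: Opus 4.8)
The plan is to establish both identities by directly unrolling the state recursion and then collecting the resulting terms according to their type. For the first identity, I would begin by showing, by a one-line induction on $t$ (or by telescoping $x_{t+1} = Ax_t + Bu_t + w_t$), that the hidden state satisfies $x_t = A^t x_0 + \sum_{i=1}^{t} A^{i-1}(Bu_{t-i} + w_{t-i})$. Substituting this into $y_t = Cx_t + Du_t + z_t$ and distributing $C$ across the sum immediately yields the claimed formula for $y_t$.

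For the second identity, I would plug the formula for $y_{t+k}$ and for each $y_{t-j}$ with $j \in [s]$ into the definition $\hat{y}_{t+k} = y_{t+k} - \sum_{j=1}^{s} \alpha_j y_{t-j}$, and then sort the terms into four buckets. The observation-noise terms collect trivially to $z_{t+k} - \sum_{j} \alpha_j z_{t-j}$. For the $D$-terms, the only place $u_{t+k}$ appears with a $D$-coefficient is the summand $Du_{t+k}$ of $y_{t+k}$, and $u_{t-j}$ appears with a $D$-coefficient only through $-\alpha_j D u_{t-j}$, so these collect to $Du_{t+k} - \sum_{j} \alpha_j Du_{t-j}$ (the $B$-coefficient contributions of these same inputs live in the fourth bucket). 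For the $x_0$-terms, the total is $\bigl(CA^{t+k} - \sum_{j=1}^{s}\alpha_j CA^{t-j}\bigr)x_0$, and here I would use $CA^{s+k}A^{t-s} = CA^{t+k}$ and $CA^{s-j}A^{t-s} = CA^{t-j}$ (valid since $t \geq s$, which is implicit in the statement of the lemma because $A^{t-s}$ appears) to factor this as $F_{\alpha,k}(A)A^{t-s}x_0$.

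The bulk of the work is the fourth bucket, the $Bu_m + w_m$ terms. Writing $i = t+k-m$, the coefficient of $Bu_{t+k-i} + w_{t+k-i}$ is $CA^{i-1}$ coming from $y_{t+k}$, minus $\alpha_j CA^{(t-j)-(t+k-i)-1} = \alpha_j CA^{i-k-j-1}$ for each $j \in [s]$ with $i - k - j - 1 \geq 0$. When $i \geq k+s+1$ every $j$ contributes, and I would factor the total coefficient as $F_{\alpha,k}(A)A^{i-(k+s+1)}$ using $CA^{s+k}A^{i-(k+s+1)} = CA^{i-1}$ and $CA^{s-j}A^{i-(k+s+1)} = CA^{i-k-j-1}$; this produces the second sum. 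When $1 \leq i \leq k+s$ only the indices $j \leq i-k-1$ contribute, so the total coefficient is $CA^{i-1} - \sum_{j=1}^{i-k-1}\alpha_j CA^{i-k-1-j}$ (the sum empty when $i \leq k+1$), which is exactly $F_{\alpha,k}^{(i-1)}(A)$ by the piecewise definition; this produces the first sum. Assembling the four buckets gives the stated expression for $\hat{y}_{t+k}$.

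The main obstacle is purely the index bookkeeping in the last bucket: keeping the shifts straight across the $s+1$ sources $y_{t+k}$ and $-\alpha_1 y_{t-1}, \dots, -\alpha_s y_{t-s}$, correctly identifying for each input $u_m$ which of the $\alpha_j$ have already ``kicked in'', and checking that the boundary index $i = k+s+1$ is treated consistently however one reads it off — as $F_{\alpha,k}^{(k+s)}(A)$ extending the first sum or as $F_{\alpha,k}(A)A^0$ starting the second — which holds because $F_{\alpha,k}^{(k+s)} = F_{\alpha,k}$ by Definition~\ref{def:matrix-polynomial}. There is no analytic content; once the reindexing is set up carefully the identity falls out by comparing coefficients.
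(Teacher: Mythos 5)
Your proposal is correct and follows essentially the same route as the paper: unroll the state recursion to get the formula for $y_t$, substitute into $\hat{y}_{t+k} = y_{t+k} - \sum_j \alpha_j y_{t-j}$, group the terms into noise, $D$-input, $x_0$, and $Bu+w$ buckets, and identify the coefficients in the last bucket with $F_{\alpha,k}^{(i-1)}(A)$ for $i \le k+s$ and $F_{\alpha,k}(A)A^{i-(k+s+1)}$ for $i \ge k+s+1$. The index bookkeeping you describe (contributing $j$ iff $j \le i-k-1$) matches the paper's split of the sum into the ranges $[1,k+s]$ and $[k+s+1,t+k]$, so there is nothing to add.
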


\begin{proof}
Plugging in the recursive definition of $x_t$, we have, 
\begin{equation}
\begin{split}
y_{t} &= C A x_{t-1} + CB u_{t-1} + C w_{t-1}  + D u_t + z_t \\
& = CA^2 x_{t-2} + CAB u_{t-2} + CA w_{t-2}  +  CB u_{t-1} + C w_{t-1}  + D u_t + z_t\\
& \hspace{0.2in} \vdots\\
& = Du_t + z_t + \sum_{i = 1}^{t} CA^{i-1} B u_{t-i}  + \sum_{i= 1}^{t} CA^{i-1} w_{t-i} + CA^{t}x_0
\end{split}
\end{equation}

Next, recall $\hat{y}_{t+k} = y_{t+k} - \sum_{j =1}^s \alpha_j y_{t-j}$ and thus 

\begin{equation}
\label{eqn:hat-y-expansion}
\begin{split}
    \hat{y}_{t+k} & =  \Paren{ z_{t+k} - \sum_{j =1}^s \alpha_j z_{t-j} } + \Paren{ D u_{t+k} - \sum_{j =1}^s \alpha_jD u_{t-j} } + \underbrace{ \Paren{ C A^{t+k} - \sum_{j  = 1}^s \alpha_j CA^{t-j} } x_0 }_{\eqref{eqn:hat-y-expansion}.(1)}  \\
    & \hspace{0.2in}+ \underbrace{ \sum_{ i =1 }^{t+k} \Paren{ CA^{i-1} B u_{t+k-i}  + CA^{i-1} w_{t+k-i}  }  - \sum_{j = 1}^s \alpha_j \Paren{  \sum_{ i = 1  }^{t-j } \Paren{ CA^{i-1} B u_{t-j-i}  + CA^{i-1} w_{t-j-i}  }  }}_{\eqref{eqn:hat-y-expansion}.(2)} \\
\end{split}
\end{equation}
First, observe,
\begin{equation}
  \eqref{eqn:hat-y-expansion}.(1) = \Paren{  F_{\alpha, k}(A) A^{t - s}  } x_0 . 
\end{equation}
Next, we split the terms in \eqref{eqn:hat-y-expansion}.(2) into the following ranges: $[1, k+s]$ and $[k+s+1, t+k]$, and expanding out each term, we have
\begin{equation}
\label{eqn:f-alpha-terms}
    \begin{split}
        \eqref{eqn:hat-y-expansion}.(2)  & = \sum_{i = 1}^{ k + s} \Paren { \underbrace{ \Paren{  C A^{i-1} - \sum_{j = 1 }^{i-k - 1}\alpha_j CA^{i - (k+j + 1)}   } }_{\eqref{eqn:f-alpha-terms}.(1)} \Paren{ B u_{t+k-i} + w_{t+k-i} }  }      \\ &  \quad + \sum_{i = k+s + 1}^{t+k } \Paren { \underbrace{ \Paren{  C A^{i-1} - \sum_{j = 1 }^{s}\alpha_j CA^{i - (k+j + 1)}   } }_{\eqref{eqn:f-alpha-terms}.(2)} \Paren{ B u_{t+k-i} + w_{t+k-i} }   }
    \end{split}
\end{equation}
Recalling the definition of $F_{\alpha,k}^{i}$ and $F_{\alpha, k}$,
\begin{equation}
\begin{split}
    \eqref{eqn:f-alpha-terms}.(1)  & = F_{\alpha,k}^{i-1}\\
    \eqref{eqn:f-alpha-terms}.(1) & =  F_{\alpha,k} A^{i - (k+s+1)},
\end{split}
\end{equation}
and therefore, 
\begin{equation*}
    \eqref{eqn:hat-y-expansion}.(2) =  \sum_{i = 1}^{k+s}  F_{\alpha,k}^{(i-1)}(A)(Bu_{t+k- i} + w_{t+k  - i}) + \sum_{i = k+s + 1}^{t + k } F_{\alpha, k }(A) A^{i  - (k+s + 1)}(Bu_{t+k- i} + w_{t+k  - i}) 
\end{equation*}
Combining all the terms together, we can conclude
\begin{equation*}
\begin{split}
    \hat{y}_{t+k} = &  \Paren{ z_{t+k} - \sum_{j = 1}^s \alpha_j z_{t-j} } +  \Paren{ D u_{t+k} - \sum_{j =1}^s \alpha_j D u_{t-j} } + \Paren{  F_{\alpha, k}(A) A^{t - s}  } x_0 \\
    & + \sum_{i = 1}^{k+s}  F_{\alpha,k}^{(i-1)}(A)(Bu_{t+k- i} + w_{t+k  - i}) + \sum_{i = k+s + 1}^{t + k } F_{\alpha, k }(A) A^{i  - (k+s + 1)}(Bu_{t+k- i} + w_{t+k  - i}) ,
\end{split}
\end{equation*}
as desired.
\end{proof}

Next, we observe that the \textit{cross-covariance} between the control input and the observation is an unbiased estimator of the Markov parameters. 

\begin{fact}[Cross-Covariance of Control and Observation]\label{fact:markov-params-formula}
For any $t,k \in \mathbb{N}$, and any $0 \leq j \leq k$, we have
    \begin{equation}
        \expecf{}{\hat{y}_{t+j} u_t^\top } =  \begin{cases} D \text{ if } $j = 0$ \\
        C A^{ j - 1} B \text{ otherwise}
        \end{cases}
    \end{equation}
\end{fact}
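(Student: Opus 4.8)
The plan is a short direct computation that isolates the one term surviving after we use independence of the inputs and noise together with the isotropy assumption $\Sigma_{\calD_u} = I$. Write the transformed series with its fixed shift $k$ and coefficients $\alpha_1,\dots,\alpha_s$, so that $\hat y_{t+j} = y_{t+j} - \sum_{l=1}^s \alpha_l\, y_{t+j-k-l}$ (with the convention that $y_\tau = 0$ for $\tau \le 0$). By linearity, $\expecf{}{\hat y_{t+j} u_t^\top} = \expecf{}{y_{t+j} u_t^\top} - \sum_{l=1}^s \alpha_l\, \expecf{}{y_{t+j-k-l} u_t^\top}$. By Fact~\ref{fact:formula}, each $y_\tau$ is a fixed linear function of $u_0,\dots,u_\tau$, $w_0,\dots,w_{\tau-1}$, $z_\tau$ and $x_0$. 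Since $0 \le j \le k$ and $l \ge 1$, the index obeys $t+j-k-l \le t-1$, so $y_{t+j-k-l}$ depends only on inputs and noise strictly before time $t$ and is therefore independent of $u_t$; as $\calD_u$ is mean zero, $\expecf{}{y_{t+j-k-l} u_t^\top} = \expecf{}{y_{t+j-k-l}}\,\expecf{}{u_t}^\top = 0$. Thus every correction term drops out and it remains to evaluate $\expecf{}{y_{t+j} u_t^\top}$.

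For this, I would expand $y_{t+j}$ via the first identity of Fact~\ref{fact:formula}, multiply on the right by $u_t^\top$, and take expectations term by term. Every summand containing a factor $w_\tau$, $z_\tau$, or $x_0$ has expectation zero because that factor is independent of $u_t$ and mean zero. Every summand of the form $CA^{i-1}B\, u_{t+j-i} u_t^\top$ with $t+j-i \ne t$ likewise vanishes, since $u_{t+j-i}$ and $u_t$ are independent and mean zero. If $j \ge 1$, the unique index with $t+j-i = t$ is $i = j$, which lies in $[1,t+j]$, and the corresponding term contributes $CA^{j-1}B\, \expecf{}{u_t u_t^\top} = CA^{j-1}B\,\Sigma_{\calD_u} = CA^{j-1}B$; the term $D u_{t+j} u_t^\top$ contributes nothing because $t+j \ne t$. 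If $j = 0$, no cross term $u_{t-i} u_t^\top$ with $i \ge 1$ survives, but the term $D u_t u_t^\top$ now contributes $D\, \Sigma_{\calD_u} = D$. Assembling the two cases yields exactly the stated formula.

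I do not expect any genuine obstacle here: the fact is an immediate consequence of the independence structure of the model and of $\Sigma_{\calD_u} = I$. The only points needing care are bookkeeping ones — verifying that the correction indices $t+j-k-l$ are all strictly less than $t$ (which is precisely where the hypothesis $j \le k$ enters, so that the variance-stabilizing transformation does not bias the estimator), fixing a convention for $y_\tau$ at non-positive times so that the identity is meaningful for all $t$, and observing that isotropy of $\calD_u$ is exactly what turns $CA^{j-1}B\,\Sigma_{\calD_u}$ into $CA^{j-1}B$ (and $D\,\Sigma_{\calD_u}$ into $D$).
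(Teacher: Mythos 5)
Your proposal is correct and follows essentially the same route as the paper: expand via Fact~\ref{fact:formula}, use independence and mean-zero-ness to kill every term not containing $u_t u_t^\top$, and use $\Sigma_{\calD_u}=I$ to evaluate the surviving term. The only cosmetic difference is that you argue the vanishing of the $\alpha_l$-correction terms directly from the first identity (noting their indices are at most $t-1$), whereas the paper reads the same conclusion off the second identity, where the correction is already absorbed into $F_{\alpha,k}^{(j-1)}$ and the hypothesis $j\le k$ enters via $F_{\alpha,k}^{(j-1)}(A)=CA^{j-1}$.
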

\begin{proof}
We use the formula in Fact~\ref{fact:formula} and the independence of the $u_i,w_i,z_i$.  When $j = 0$, we immediately have 
\[
\EE[\hat{y}_{t} u_t^\top ] = \EE[Du_t u_t^\top] = D \,.
\]
When $j > 0$, we have
\[
\EE[\hat{y}_{t+j} u_t^\top ]  = \EE[F_{\alpha,k}^{(j-1)}(A)B u_t u_t^\top  ] = F_{\alpha,k}^{(j-1)}(A)B \,.
\]
Next, since $j \leq k$, by definition we have $F_{\alpha,k}^{(j-1)}(A) = CA^{j-1}$ and we are done.
\end{proof}

In light of the above, we make the following definition. 
\begin{definition}\label{def:estimator-means}
 For an integer $j \geq 0$, we define the matrix $X_j$ as 
\[
X_j =   \begin{cases} D \text{ if } $j = 0$ \\
C A^{ j - 1} B \text{ otherwise}
\end{cases} \,.
\]
\end{definition}
\noindent Of course, we have $X_j = \EE[\hat{y}_{t + j} u_t^\top ] $ by the previous fact.

We also require the following straight-forward consequences of the bounded condition number assumption on the observability and controlability matrices from Section~\ref{sec:formal-setup}:

\begin{claim}\label{claim:eigenvalue-bound}
Given an integer $s$, let $O_s, Q_s$ be the observability and controlability matrices from definition \ref{def:observability} and \ref{def:controllability}. Then,  $\sigma_{\min}(O_s) \leq \sqrt{s}\norm{C}$ and  $\sigma_{\min}(Q_s) \leq \sqrt{s}\norm{B}$.
\end{claim}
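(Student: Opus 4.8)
The plan is to bound $\sigma_{\min}(O_s)$ from above by exhibiting a single unit vector on which $O_s$ acts with small norm, and the natural candidate is an eigenvector of $A$. Recall that $O_s \in \C^{sm \times n}$ has full column rank (so $n \le sm$) and hence $\sigma_{\min}(O_s) = \min_{\norm{v} = 1} \norm{O_s v}$, with $\norm{O_s v}^2 = \sum_{i=0}^{s-1} \norm{C A^i v}^2$. Since $A \in \C^{n \times n}$ has at least one eigenvalue $\lambda$ with a corresponding unit eigenvector $v$, and since by the non-explosive assumption (Definition~\ref{def:system-assumptions}) we have $\abs{\lambda} \le 1$, it follows that $A^i v = \lambda^i v$ and therefore $\norm{C A^i v} = \abs{\lambda}^i \norm{C v} \le \norm{C v} \le \norm{C}$. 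Summing over $i = 0, \dots, s-1$ gives $\norm{O_s v}^2 \le s \norm{C}^2$, and hence $\sigma_{\min}(O_s) \le \norm{O_s v} \le \sqrt{s}\, \norm{C}$.

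For $Q_s = \begin{bmatrix} B & AB & \cdots & A^{s-1}B \end{bmatrix}$, I would run the ``left'' version of the same argument: under controllability $Q_s$ has full row rank, so $\sigma_{\min}(Q_s) = \min_{\norm{w} = 1} \norm{w^* Q_s}$, and I pick $w$ to be a unit left eigenvector of $A$, i.e. $w^* A = \lambda w^*$ with $\abs{\lambda} \le 1$ (such a $w$ exists because the left eigenvalues of $A$ coincide with its eigenvalues, which are all bounded by $1$ in magnitude). Then $w^* A^i = \lambda^i w^*$, so $\norm{w^* A^i B} = \abs{\lambda}^i \norm{w^* B} \le \norm{B}$, and summing over $i = 0, \dots, s-1$ yields $\norm{w^* Q_s}^2 \le s \norm{B}^2$, i.e. $\sigma_{\min}(Q_s) \le \sqrt{s}\, \norm{B}$.

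The argument is entirely routine; the only point worth flagging is that it genuinely relies on the assumption that the \emph{eigenvalues} of $A$ (rather than its singular values) are bounded by $1$. One cannot instead estimate $\norm{C A^i v} \le \norm{C}\norm{A^i}\norm{v}$, since for a non-normal $A$ the powers $\norm{A^i}$ may grow polynomially in $i$ (cf.\ Lemma~\ref{claim:powering-bound}); restricting attention to an (left) eigenvector is exactly what sidesteps this blow-up and keeps each summand bounded by $\norm{C}^2$ (respectively $\norm{B}^2$).
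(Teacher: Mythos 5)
Your proof is correct and takes essentially the same route as the paper: the paper also evaluates $O_s$ on a unit eigenvector of $A$ and uses $\abs{\lambda}\le 1$ to bound each block $\norm{CA^i v}=\abs{\lambda}^i\norm{Cv}\le\norm{C}$, and it dispatches $Q_s$ with ``a similar argument,'' which is exactly the left-eigenvector version you spell out. No gaps.
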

\begin{proof}
Let $v$ be an eigenvector of $A$ and say $Av = \lambda v$.  Note that $|\lambda| \leq 1$.  Thus,
\[
\norm{O_sv} \leq \sqrt{\norm{C v}^2 ( 1 + \lambda^2 + \lambda^4 + \dots + \lambda^{2s-2}) }  \leq \sqrt{s} \norm{C} \norm{v} \,.
\]
A similar argument works for $Q_s$.
\end{proof}

\begin{claim}\label{claim:boundA} Given integers $t> s$, let $\sigma_{\max}(O_{t})/\sigma_{\min}(O_s) \leq \kappa$.  Then for any integer $k > 0$,
\[
\norm{A^k}_F \leq (\sqrt{n} \kappa)^{ k/(t - s)  } \,. 
\]
The same holds with $O$ replaced with $Q$.
\end{claim}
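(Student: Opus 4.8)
The plan is to prove this by a short linear-algebra argument that exploits the ``shift'' structure of the observability matrix together with submultiplicativity of the operator norm. First I would establish the key intermediate bound: $\norm{A^j} \le \kappa$ for every integer $1 \le j \le t-s$. The point is that $O_s A^j$ is the matrix whose block-rows are $CA^j, CA^{j+1}, \dots, CA^{j+s-1}$, i.e.\ it is exactly the contiguous block of rows $j, j+1, \dots, j+s-1$ of $O_t$ (these rows exist since $j+s-1 \le t-1$). Deleting rows cannot increase the operator norm, so $\norm{O_s A^j} \le \norm{O_t} = \sigma_{\max}(O_t)$. On the other hand, since $O_s$ has full column rank we have $\norm{O_s u} \ge \sigma_{\min}(O_s)\norm{u}$ for all $u$, hence $\norm{O_s A^j} \ge \sigma_{\min}(O_s)\norm{A^j}$. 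Combining the two inequalities gives $\norm{A^j} \le \sigma_{\max}(O_t)/\sigma_{\min}(O_s) \le \kappa$.

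Next I would bootstrap to an arbitrary power. Writing $k = q(t-s) + r$ with integer $q \ge 0$ and $0 \le r < t-s$, we have $A^k = (A^{t-s})^q A^r$, so by submultiplicativity of the operator norm and the previous step (applied with $j = t-s$, and with $j = r$ when $r > 0$), $\norm{A^k} \le \norm{A^{t-s}}^q \norm{A^r} \le \kappa^{\lceil k/(t-s)\rceil}$. Converting to the Frobenius norm via $\norm{A^k}_F \le \sqrt{n}\,\norm{A^k}$ and using that $\sqrt{n}\,\kappa \ge 1$, we obtain $\norm{A^k}_F \le \sqrt{n}\,\kappa^{\lceil k/(t-s)\rceil} \le (\sqrt{n}\,\kappa)^{\lceil k/(t-s)\rceil}$, which is exactly the claimed bound when $(t-s)\mid k$ and agrees with it after reading the exponent as a ceiling in general. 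The controllability statement follows by the mirror-image argument: for $j \le t-s$ the matrix $A^j Q_s$, whose block-columns are $A^jB, A^{j+1}B, \dots, A^{j+s-1}B$, is a contiguous block-column submatrix of $Q_t$; deleting columns does not increase the operator norm; and since $Q_s$ has full row rank we have $\norm{M Q_s} \ge \sigma_{\min}(Q_s)\norm{M}$ for every $M \in \C^{n\times n}$. Everything else is identical.

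I do not expect a genuinely hard step here. The two things to get right are: (i) correctly identifying $O_s A^j$ (resp.\ $A^j Q_s$) as a contiguous block-submatrix of $O_t$ (resp.\ $Q_t$) — this is precisely where the hypothesis $j \le t-s$, and hence the choice to compare orders $s$ and $t$, is used; and (ii) the rounding in the exponent — the stated inequality with $k/(t-s)$ is literally correct only when $t-s$ divides $k$, and in general one should replace $k/(t-s)$ by $\lceil k/(t-s)\rceil$ (equivalently, concede one extra factor of $\sqrt{n}\,\kappa$), which I expect is the intended reading and is immaterial for the applications of this claim. Indeed the stated bound cannot hold verbatim for $k < t-s$ and $n>1$, since one only gets $\norm{A^k}_F \le \sqrt{n}\,\kappa$ there, so I would phrase the conclusion with the ceiling (or restrict to $k \ge t-s$) to be safe.
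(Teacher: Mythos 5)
Your proof is correct and follows essentially the same route as the paper's: both rest on the observation that $O_sA^{t-s}$ (more generally $O_sA^{j}$ for $j\le t-s$) sits inside $O_t$ as a block submatrix, giving $\sigma_{\min}(O_s)\norm{A^{t-s}}\le\sigma_{\max}(O_t)$, and then power up via submultiplicativity. Your caveat about the exponent is well taken --- the paper only establishes $\norm{A^{t-s}}_F\le\sqrt{n}\kappa$ and says this ``immediately implies'' the claim, which is literally true only when $(t-s)\mid k$; the ceiling version you prove is what is actually needed and used downstream.
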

\begin{proof}
Note that $\norm{O_s A^{t-s}}_F \geq \sigma_{\min}(O_s) \norm{A^{t-s}}_F$.  On the other hand,
\[
\norm{O_s A^{t-s}}_F \leq \norm{O_t}_F \leq \sqrt{n} \norm{O_t} \leq \sqrt{n}\kappa \sigma_{\min}(O_s) \,.
\]
Thus, $\norm{A^{t-s}}_F \leq \sqrt{n} \kappa$ and this immediately implies the desired inequality.
\end{proof}

\section{Algorithm}

Our algorithm follows the outline described in Section~\ref{sec:technical-overview}.  We first learn a stabilizing transformation that transforms $y_t \to \hat{y}_t$ (see Algorithm~\ref{algo:stabilizing}).  Note that in the general case, when $C$ is $m \times n$ (as opposed to $1 \times n$ as described in Section~\ref{sec:technical-overview}), the coefficients $\alpha_1, \dots , \alpha_s$ are $m \times m$ matrices.  Once we have these matrices, we empirically estimate the Markov parameters (recall Fact~\ref{fact:markov-params-formula}).  Finally, once we obtain estimates for the Markov parameters, we extract the system parameters via the Ho-Kalman algorithm (see Algorithm~\ref{algo:robust-ho-kalman}).


Before we describe our algorithm formally, we introduce a few definitions and notational simplifications.  Throughout this section, we will use $\eps$ to denote the desired accuracy and $\delta$ to be a parameter for the failure probability.  We will also make the simplification that $x_0 = 0$.  This is allowed because we can absorb the distribution of $x_0$ into the distribution for $w_0$ i.e. $\calD_w \leftarrow A\calD_0 + \calD_w$ which is still polynomially bounded in the system parameters.  As mentioned in Remark~\ref{rem:not-iid}, we do not need the $w_i$ to have identical distributions, just that they are independent and bounded variance.

\begin{mdframed}
  \begin{algorithm}[Learning a Linear Dynamical System]
    \label{algo:learning-lds}\mbox{}
    \begin{description}
    \item[Input:] $T$ observations $\Set{y_1, y_2, \ldots , y_T}$ and the corresponding control inputs $\Set{u_1, u_2, \ldots , u_T}$  generated from a Linear Dynamical System $\calL\Paren{A,B,C,D}$ satisfying the assumptions in Section~\ref{sec:formal-setup}.
    
    \item[Input:]   $s \in \mathbb{N}$ such that the observability and controllability matrices satisfy the condition number bounds in Definition~\ref{def:system-assumptions},   accuracy parameter $0 < \epsilon < 1$, and failure probability parameter $0 < \delta< 1$.
    
    \item[Operation:]\mbox{}
    \begin{enumerate}
    \item \textbf{Stabilizing the System:} Run Algorithm \ref{algo:stabilizing} to obtain coefficient matrices $\alpha_1, \ldots, \alpha_s$. 
    \item \textbf{Estimating the Markov Parameters:} Set $k = 10s$.  For all $j = 0,1, \dots , k$ 
    \begin{enumerate}
    \item For all $t \in [T]$ with $t > k + s$, compute $\hat{y}_{t} = y_{t} - \sum_{i=1}^{s} \alpha_i y_{t-k - i}$.  
    \item Compute $\hat{X}_j = \frac{1}{T - k - s } \sum_{ t = k + s + 1 }^{T } \hat{y}_{t} u_{t- j}^\top$. 
    \end{enumerate}

    \item \textbf{Robust Ho-Kalman:} Run Ho-Kalman on $\hat{G} = [\hat{X}_0, \hat{X}_1, \dots , \hat{X}_{2s}]$ to obtain estimates $\hat{A},\hat{B},\hat{C},\hat{D}$
    \end{enumerate}
     \item[Output:] $\hat{A}, \hat{B}, \hat{C}, \hat{D}$ satisfying guarantees of \pref{thm:main-learning-lds}
    \end{description}
  \end{algorithm}
\end{mdframed}

In the definition below, we condense all of the system parameters into a set $\calS$ so that we don't need to list out the full set of parameters in future computations.  Recall Definition~\ref{def:distributional-assumptions} where $K$ is the hypercontractivity parameter for the input distribution (and will be defined as such throughout this section) and $\sigma_w, \sigma_z$ are upper bounds on the variances of the noise distributions.

\begin{definition}\label{def:parameters}
Let $\calS$ denote the set of parameters 
\[
\{\norm{A}, \norm{B}, \norm{C}, \norm{D}, m,n,p, s, \kappa , K, 1/\delta, \sigma_w, \sigma_z \} \,.
\]
We will write $\poly(\calS)$ for a quantity that depends polynomially on these parameters.
\end{definition}
Note that we will be more explicit about dependencies on the accuracy $\eps$ so it is not included in the definition of $\calS$.

Now we define a constraint system for a convex program that is at the core of our algorithm.  Let $\eps$ be the desired accuracy, and $k = 10s$.  Throughout this section, we will treat $k$ as fixed.  Choose sufficiently large polynomials $P_0,P_1, P_2$ in terms of the parameters in $\calS$ such that $P_0 \ll P_1 \ll P_2 $ and let  $L = P_2 \log^2 (1/\eps) /\eps^2$.

\begin{definition}\label{def:program-v2}
We define the constraint system $\calC_{\alpha}$  for matrices of coefficients $\alpha_1, \dots , \alpha _s \in \R^{m \times m}$ as follows.  We define $\hat{y}_{t+k} = y_{t+k} - \alpha_1 y_{t-1} - \dots - \alpha_s y_{t-s}$.  We also enforce the following constraints
\begin{enumerate}
    \item $\norm{\alpha_i}_F \leq P_0$ for all $i = 1,2, \dots , s$
    \item For all $i \in [100 s n m^2 K \log L]$, we have
    \[\|\hat{y}_{iL + k} \|\leq P_1 \log(1/\eps) \]
\end{enumerate}
\end{definition}

\begin{mdframed}
  \begin{algorithm}[Stabilizing the System]
    \label{algo:stabilizing}\mbox{}
    \begin{description}
    \item[Input:] $T$ observations $\Set{y_1, y_2, \ldots , y_T}$ from a single trajectory and the corresponding control inputs $\Set{u_1, u_2, \ldots , u_T}$  generated from a Linear Dynamical System $\calL\Paren{A,B,C,D}$ satisfying the assumptions in Section~\ref{sec:formal-setup}
    
    \item[Input:]   $s \in \mathbb{N}$ such that the observability and controllability matrices satisfy the condition number bounds in Definition~\ref{def:system-assumptions},   accuracy parameter $0 < \epsilon < 1$, and failure probability parameter $0 < \delta< 1$.
    
    \item[Operation:]\mbox{}
    \begin{enumerate}
    \item Let $P_0, P_1, P_2$ be sufficiently large polynomials in the parameters $m, n, p,s,k,\kappa$ and $K$ such that $P_0 \ll P_1 \ll  P_2$. Let $L = P_2\log^2(1/\epsilon)/\epsilon^2$. Let $S = \Set{1,2, \ldots }$. 
    \item Solve the following system in the matrix variables $\alpha_1, \alpha_2, \ldots \alpha_s \in \mathbb{R}^{m \times m}$:
    \begin{equation*} \calC_{\alpha}=
  \left \{
    \begin{aligned}
      & \forall j \in [s]
      & \Norm{\alpha_j}_F^2 
      & \leq  P_0\\
      & \forall i \in [100 snm^2 K \log(L)]
      &  \Norm{ y_{iL +  k }  - \sum_{j \in [s]}  \alpha_j\cdot  y_{iL -j } }_2^2 
      & \leq  P_1 \log(1/\epsilon) \\
    \end{aligned}
  \right \}
\end{equation*}
    \end{enumerate}
    \item[Output:] Matrices $\alpha_1, \alpha_2, \ldots \alpha_s$ obtained above.
    \end{description}
  \end{algorithm}
\end{mdframed}

 Our main theorem is that Algorithm~\ref{algo:learning-lds} runs with polynomial time and sample complexity and outputs estimates $\hat{A},\hat{B},\hat{C},\hat{D}$ that are $\eps$-close to the true system parameters up to a global similarity transformation (which is always necessary).
 
\begin{theorem}[Learning a Linear Dynamical System]
\label{thm:main-learning-lds}

Given $0 < \epsilon, \delta < 1$, an integer $s$, and trajectory length $T = \Omega\Paren{\poly(\mathcal{S}) \cdot \frac{\log^3(\frac{1}{\epsilon})}{\epsilon^4}}$, and the corresponding observations and inputs $\Set{y_i, u_i}_{i \in [T]}$ , from a linear dynamical system $\calL\Paren{ A,B,C,D}$, satisfying the assumptions in Section~\ref{sec:formal-setup}, Algorithm \ref{algo:learning-lds} outputs estimates $\hat{A}, \hat{B}, \hat{C} , \hat{D}$ such that with probability at least $1-\delta$, there exists a similarity tranform $U$ satisfying 

\begin{align*}
    \max \Paren{  \Norm{A - U^{-1} \hat{A}  U } ,   
    \Norm{C - \hat{C}U } , 
     \Norm{B - U^{-1} \hat{B} } ,   
    \Norm{D - \hat{D}}  } \leq  \epsilon , 
\end{align*}
Further, Algorithm \ref{algo:learning-lds} runs in $\poly(\mathcal{S},\frac{1}{\epsilon})$ time. 
\end{theorem}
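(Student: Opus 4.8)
The plan is to chain together the three components of Algorithm~\ref{algo:learning-lds} --- the stabilizing convex program, the method-of-moments estimator, and robust Ho-Kalman --- and to track how the error propagates through each. First I would establish that the constraint system $\calC_\alpha$ is feasible with high probability: by the observability assumption (bounded condition number of $O_s, O_{2s}$), there is a vector $\alpha^*$ with $\norm{\alpha^*_j}_F \leq \poly(\calS)$ such that $F_{\alpha^*,k}(A) = 0$ (this is where Claim~\ref{claim:eigenvalue-bound} / Claim~\ref{claim:boundA} and the bounded-eigenvalue powering bound Lemma~\ref{claim:powering-bound} enter, to control $CA^{k+s}$ in the span of the lower powers). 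Plugging $\alpha^*$ into the expansion of $\hat y_{t+k}$ from Fact~\ref{fact:formula}, the long tail $\sum_{i \ge k+s+1} F_{\alpha^*,k}(A) A^{i-(k+s+1)}(\cdots)$ vanishes, leaving only $O(k+s)$ terms of bounded size plus bounded-variance noise; a Markov/Chernoff bound over the $\poly(\calS)\log L$ constraints then shows $\alpha^*$ is feasible except with probability $\delta/10$. So Algorithm~\ref{algo:stabilizing} returns some feasible $\alpha$ in polynomial time via the ellipsoid method (the program is convex with an efficient separation oracle).

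Second, I would show that \emph{any} feasible $\alpha$ yields a stabilized estimator. The heart of this is the anti-concentration argument: decompose $\hat y_{t+k} = X_t + V_t + W_t$ where $X_t$ is deterministically bounded, $\Tr(\Sigma_{V_t}) = \calG_{\alpha,l}$ is the potential, and $W_t$ is uncontrolled but independent of $V_t$. Since $V_t$ is a linear image of hypercontractive inputs (Lemma~\ref{claim:sum-hypercontractive}), it is $(4,2,K)$-hypercontractive, so by the Paley--Zygmund-type Lemma~\ref{claim:anti-concentration}, if $\calG_{\alpha,l}$ were large then $\abs{\hat y_{t+k}}$ would exceed $P_1\log(1/\eps)$ with constant probability at each of the $\poly(\calS)\log L$ well-separated indices $iL$, contradicting feasibility. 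This forces $\calG_{\alpha,L} \le \poly(\calS)$ for the output $\alpha$. Then, via the symbolic matrix inequality $\EE_{u,w,z}[M_j] \preceq \frac{P_1}{L}(I + G_L)$ (Corollary~\ref{coro:estimator-variance-symbolic}), a $1/L^n$-net over $\alpha$ (small enough precisely because Lemma~\ref{claim:powering-bound} bounds $\norm{A^L}$ polynomially), and a union bound, I get $\EE\norm{\hat X_j - X_j}^2 \le \frac{P_1}{L}(\norm\alpha^2 + \calG_{\alpha,L}) \le \eps^2/\poly(\calS)$, hence $\norm{\hat X_j - X_j} \le \eps/\poly(\calS)$ for all $j = 0, \dots, 2s$ with probability $1 - \delta/10$.

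Third, I would feed $\hat G = [\hat X_0, \dots, \hat X_{2s}]$ into robust Ho-Kalman (Algorithm~\ref{algo:robust-ho-kalman}) and invoke its stability guarantee: since the true Hankel matrix built from $X_j = CA^{j-1}B$ has $\sigma_{\min}$ bounded below by $1/\poly(\kappa, \dots)$ (from the condition-number assumptions on $O_s$ and $Q_s$), a perturbation of size $\eps/\poly(\calS)$ in the Markov parameters yields estimates $\hat A, \hat B, \hat C, \hat D$ within $\eps$ of $A, B, C, D$ up to a common similarity transform $U$. Choosing the hidden polynomials $P_0 \ll P_1 \ll P_2$ and $T = \Omega(\poly(\calS)\log^3(1/\eps)/\eps^4)$ large enough to make $L = P_2\log^2(1/\eps)/\eps^2$ samples available for each of the $\poly(\calS)\log L$ constraint indices and for the averaging in step 2, and union bounding the three failure events, gives overall success probability $1 - \delta$; all steps are polynomial-time. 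The main obstacle --- and the genuinely delicate step --- is the second one: controlling the variance of the moment estimator \emph{uniformly} over the data-dependent $\alpha$. The naive net over $\alpha$ is exponentially fine because individual terms scale like $\norm{A}^L$; the crux is that the bounded-eigenvalue-to-polynomial-powering bound (Lemma~\ref{claim:powering-bound}) shrinks the required net to $1/L^n$, which is coarse enough to union-bound against with only $L$ effective samples, while the hypercontractive anti-concentration ensures the feasibility constraint actually pins down the potential.
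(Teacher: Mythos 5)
Your proposal follows essentially the same route as the paper: feasibility of $\calC_\alpha$ via the observability assumption, anti-concentration forcing any feasible $\alpha$ to have small potential, the symbolic/decoupled variance bound with a $1/L^{O(n)}$-net enabled by Lemma~\ref{claim:powering-bound}, and then robust Ho-Kalman. The one quantitative point to be careful about is that the Ho-Kalman stability bound (Lemma~\ref{lem:ho-kalman}) scales as $\sqrt{n\norm{G-\hat G}}$, not linearly in the perturbation, so the Markov parameters must be estimated to accuracy $\eps^2$ to get final error $\eps$ --- which is exactly why $T$ scales as $1/\eps^4$ rather than the $1/\eps^2$ of Theorem~\ref{thm:learning-markov}.
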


The overall structure of our approach is to first estimate the Markov parameters (see Definition \ref{def:markov-params}). We then recover the estimates of the system parameters by setting up a Generalized eigenvalue problem and using the Ho-Kalman Algorithm (see for example Theorem 5.3 in~\cite{oymak2019non}).   

The key techincal theorem we obtain for learning each block matrix in the Markov Parameter matrix is as follows: 

\begin{theorem}[Learning the Markov Parameters]
\label{thm:learning-markov}
Given $0 <  \epsilon, \delta < 1$, an integer $s$, and trajectory length $T = \Omega\Paren{\poly(\mathcal{S}) \cdot \frac{\log^3(\frac{1}{\epsilon})}{\epsilon^2}}$, with the corresponding observations and inputs $\Set{y_i, u_i}_{i \in [T]}$, from a linear dynamical system $\calL\Paren{ A,B,C,D}$, satisfying the assumptions in Section~\ref{sec:formal-setup}, Algorithm \ref{algo:stabilizing} outputs $\alpha_1, \alpha_2, \ldots, \alpha_s$ in time $\poly(\mathcal{S},1/\epsilon)$ such that with probability at least $1-\delta$, for all $0 \leq j \leq k$
\begin{equation*}
    \Norm{ \frac{1}{T - k - s} \sum_{ t=  k + s + 1}^{T }  \Paren{ y_{t} - \sum_{i \in [s]} \alpha_i y_{t-k - i}  }u_{t - j}^\top  - X_j }_F^2 \leq \epsilon^2  
\end{equation*}
where recall $X_j$ is defined in Definition~\ref{def:estimator-means}.
\end{theorem}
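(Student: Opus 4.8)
The plan is to follow the three-stage strategy from Section~\ref{sec:technical-overview}: (i) show the convex program $\calC_\alpha$ of Algorithm~\ref{algo:stabilizing} is feasible with probability $1-\delta/10$; (ii) show that \emph{any} feasible $\alpha$ makes the potential $\calG_{\alpha,L}=\sum_{i=0}^{L}\Norm{F_{\alpha,k}(A)A^iB}_F^2$ at most $\poly(\calS)\log^2(1/\eps)$; and (iii) show that bounded potential together with $\Norm{\alpha_i}_F\le P_0$ forces the empirical average $\frac{1}{T-k-s}\sum_{t=k+s+1}^{T}\bigl(y_t-\sum_i\alpha_iy_{t-k-i}\bigr)u_{t-j}^\top$ to be $\eps$-close to $X_j$, simultaneously for all $0\le j\le k$. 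Since the output of Algorithm~\ref{algo:stabilizing} satisfies both constraints of $\calC_\alpha$, combining (i)--(iii) yields the theorem, with the claimed running time following because $\calC_\alpha$ is convex with an efficient separation oracle.

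For (i) I would exhibit the witness $\alpha^\star$: the bounded condition number of $O_{2s}$ relative to $O_s$, via Claim~\ref{claim:boundA}, places the rows of $CA^{k+s}$ in the rowspace of $O_s=[C;CA;\dots;CA^{s-1}]$ up to negligible error, so there are matrices $\alpha_j^\star$ with $\Norm{\alpha_j^\star}_F\le P_0$ and $F_{\alpha^\star,k}(A)=0$. Plugging $\alpha^\star$ into the expansion of $\hat y_{t+k}$ in Fact~\ref{fact:formula} kills the whole tail $\sum_{i\ge k+s+1}F_{\alpha^\star,k}(A)A^{i-(k+s+1)}(Bu+w)$, leaving $O(k+s)$ terms of bounded variance (the $x_0$ term vanishes since we set $x_0=0$); a Bernstein/Chebyshev bound then shows all $100snm^2K\log L$ checkpoint constraints hold at once. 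This uses only bounded covariances of $\calD_u,\calD_w,\calD_z$, not any tail assumption.

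For (ii) I would decompose each checkpoint as $\hat y_{iL+k}=\Xi_i+V_i+W_i$, where $V_i$ collects the control contributions of the tail terms so that $\mathrm{Tr}(\Sigma_{V_i})=\calG_{\alpha,L}$ (the ``potential is variance'' identity), $\Xi_i$ is $\poly(\calS)$-bounded, and $W_i$ is an independent remainder built from the noise and the $O(k+s)$ head terms. As a linear image of the hypercontractive inputs, $V_i$ is $(4,2,K)$-hypercontractive by Lemma~\ref{claim:sum-hypercontractive}, so Lemma~\ref{claim:anti-concentration} gives a Paley--Zygmund bound: if $\calG_{\alpha,L}\gg(P_1\log(1/\eps))^2$ then $\Norm{\hat y_{iL+k}}>P_1\log(1/\eps)$ with probability $\Omega(1/K)$, using independence of $V_i$ and $W_i$ to rule out cancellation. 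Because the checkpoints are spaced $L$ apart they involve disjoint blocks of $(u,w,z)$, so these events are independent across the $\Theta(snm^2K\log L)$ checkpoints and at least one constraint fails except with probability $\le\eps^{10}$; hence every feasible $\alpha$ has bounded potential.

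For (iii) I would first prove the single-$\alpha$ second-moment bound $\EE\bigl[\Norm{\frac1L\sum_{t\le L}\hat y_{t+j}u_t^\top-X_j}_F^2\bigr]\le\frac{P_1}{L}\bigl(\Norm{\alpha}_F^2+\calG_{\alpha,L}\bigr)$ by expanding the square and using independence of $(u_t,w_t,z_t)$ across time to cancel cross terms (this is the most computational step). Expressing both the estimator error and the potential as quadratic forms $v_\alpha^\top M_j v_\alpha$ and $v_\alpha^\top G_L v_\alpha$ in $v_\alpha=(I,\alpha_1,\dots,\alpha_s)$, with $M_j,G_L$ depending only on $(u,w,z)$, this reads $\EE[M_j]\preceq\frac{P_1}{L}(I+G_L)$. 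To pass to the data-dependent $\alpha$ returned by the algorithm I would union bound over an $\eta$-net $\calN$ of $\{\alpha:\Norm{\alpha_i}_F\le P_0\}$, showing that $v_{\bar\alpha}^\top M_j v_{\bar\alpha}$ concentrates around its mean for each fixed $\bar\alpha\in\calN$ (again via hypercontractivity of the linear-in-$u$ forms). The \textbf{main obstacle} is that a naive net must resolve quantities as large as $\Norm{\alpha_iA^L}$, which is exponential in $L$ under only a bound on $\Norm{A}$ — this is exactly what Lemma~\ref{claim:powering-bound} fixes, bounding $\Norm{A^L}\le n(2(1+\Norm{A})L)^n$ using only the eigenvalue assumption, so that $\eta=1/\poly(\calS,L)$ suffices and $\log\lvert\calN\rvert=\poly(\calS)\log(1/\eps)$, making the union bound affordable against the $\approx L$ samples. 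Finally, by (i)--(ii) the output $\alpha$ is feasible, hence within $\eta$ of some $\bar\alpha\in\calN$ with $\Norm{\bar\alpha_i}_F\le P_0$ and $\calG_{\bar\alpha,L}\le\poly(\calS)\log^2(1/\eps)$; taking $L=P_2\log^2(1/\eps)/\eps^2$ with $P_2$ large and noting $T-k-s\ge L$ gives $\Norm{\hat X_j-X_j}_F^2\le\eps^2$ for all $0\le j\le k$, where $\EE[\hat X_j]=X_j$ by Fact~\ref{fact:markov-params-formula}.
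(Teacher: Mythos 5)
Your parts (i) and (ii) match the paper's proof: feasibility via the witness $\alpha^\star$ from the conditioning of $O_s$ (Lemma~\ref{lem:feasibility}), and the bounded-potential claim for every feasible $\alpha$ via the decomposition $\hat y = X+V+W$, hypercontractivity of $V$ (Lemma~\ref{claim:sum-hypercontractive}), the Paley--Zygmund bound (Lemma~\ref{claim:anti-concentration}), and a net of resolution $1/L^{O(n)}$ made affordable by Lemma~\ref{claim:powering-bound}. One imprecision in (ii): the checkpoint events are \emph{not} independent --- $\hat y_{iL+k}$ depends on all earlier $u_t,w_t$, not just the last block --- but the paper's fix is exactly what your ``independence of $V_i$ and $W_i$'' suggests: one multiplies \emph{conditional} probabilities, noting that conditioned on all earlier realizations the fresh block still fails with probability $\ge 1/(10K)$.

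The genuine gap is in your step (iii), in how you transfer the variance bound $\EE[M_j]\preceq \frac{P_1}{L}(I+G_L+H_L)$ to the data-dependent $\hat\alpha$. You propose to union bound over the net and show that $v_{\bar\alpha}^\top M_j v_{\bar\alpha}$ concentrates around its mean for each fixed net point ``via hypercontractivity.'' This does not go through under the paper's assumptions: the net has $L^{\Omega(nsm^2)}$ points, so you need per-point failure probability $L^{-\Omega(nsm^2)}$, but $v^\top M_j v$ is a degree-four polynomial in the data, and controlling even its \emph{variance} would require eighth moments of $u_t$ and fourth moments of $w_t,z_t$ --- the model only grants $(4,2)$-hypercontractivity of $u_t$ and bounded covariance of the noise. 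Plain Markov on $v^\top M_j v$ gives only a $1/t$ tail, so absorbing the union bound forces a loss of $L^{\Omega(nsm^2)}$ in the bound, destroying the $\eps^2$ guarantee. The paper avoids any union bound here: since $M_j\succeq 0$ pointwise, a \emph{single} application of Markov's inequality to the scalar $\textsf{Tr}\bigl(\widetilde M_j^{-1/2}M_j\widetilde M_j^{-1/2}\bigr)$ with $\widetilde M_j=\frac{P_1}{L}(I+G_L+H_L)$ yields the matrix inequality $M_j\preceq \frac{10k(sm^2+1)}{\delta}\widetilde M_j$ with probability $1-0.1\delta/k$, and this single event simultaneously controls $v_\alpha^\top M_j v_\alpha$ for \emph{every} $\alpha$, in particular the data-dependent one. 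You should replace your net-based concentration in (iii) with this trace--Markov argument (the net is needed only in step (ii)). A second, minor omission: your single-$\alpha$ variance bound drops the potential $H_{\alpha,L}=\sum_i\Norm{F_\alpha(A)A^i}_F^2$ coming from the process noise $w_t$; it is controlled by $G_{\alpha,L+s}$ via the conditioning of the controllability matrix (Lemma~\ref{lem:alternate-potential}), but it must appear.
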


Once we have proven Theorem~\ref{thm:learning-markov}, we can combine it with a result from \cite{oymak2019non} for estimating the system parameters from the Markov Parameters using Ho-Kalman.  See Section~\ref{sec:markov-params-to-sys-params} for details.  The main technical work of this paper is in proving Theorem \ref{thm:learning-markov}, which we focus on in Section~\ref{sec:learn-markov}.

\vspace{0.5in}






\section{Analysis of Algorithm \ref{algo:stabilizing} }\label{sec:learn-markov}
In this section, we analyze Algorithm \ref{algo:stabilizing} and prove Theorem \ref{thm:learning-markov}.  Since we treat $k$ as fixed throughout this section, we will write $F_{\alpha}(A)$ for $F_{\alpha , k}(A)$ (recall Definition~\ref{def:matrix-polynomial}).  First, we need the following basic observation.

\begin{claim}[Uniform bounds on the control and noise]\label{claim:basic-boundedness}
Let $P_0, P_1, P_2$ be sufficiently large polynomials in the parameters in $\calS$ such that $P_0 \ll P_1 \ll  P_2$. Let $L = P_2\log^2(1/\epsilon)/\epsilon^2$.  With probability $1 - 0.1\delta$, the following events all hold:
\begin{enumerate}
    \item For all $i \in [100 s n m^2 K \log L]$ and integers $-10(k+s) \leq c  \leq 10(k + s)$, we have
    \begin{equation*}
    \begin{split}
    \norm{u_{iL + c}}, \norm{w_{iL + c}}, \norm{z_{iL + c}} \leq P_0 \log(1/\eps) \,.    
    \end{split}
    \end{equation*}
    \item For all $t \leq P_2^2 \log^3 (1/\eps)/\eps^2 $, we have
    \begin{equation*}
    \begin{split}
    \norm{u_{t}}, \norm{w_{t}}, \norm{z_{t}} \leq L P_0 \,.    
    \end{split}
    \end{equation*}
\end{enumerate}
\end{claim}
\begin{proof}
Note that from the covariance bounds on $u_t,w_t,z_t$ (and using Markov's inequality), we have that for a fixed index $t$ and any parameter $\alpha$,
\begin{align*}
&\Pr[ \norm{u_t} \geq \alpha ] \leq \frac{n}{\alpha^2} \\
&\Pr[ \norm{w_t} \geq \alpha ] \leq \frac{n\sigma_w^2}{\alpha^2} \\
&\Pr[ \norm{z_t} \geq \alpha ] \leq \frac{n \sigma_z^2}{\alpha^2} \,.
\end{align*}
Now to prove the first statement, note that we only need to union bound over 
\[
O(100(k + s) s n m^2 K \log L) = \poly(\calS) \log(1/\eps)
\]
variables so as long as we choose $P_0$ sufficiently large we get that the statement holds with at least $1 - 0.05\delta$ probability.  The proof of the second statement is similar except we union bound over more variables. 
\end{proof}

We begin by establishing the feasibility of the constraint system, $\calC_{\alpha}$, as defined in Algorithm \ref{algo:stabilizing}: 

\begin{lemma}[Feasibility of the Constraint System]
\label{lem:feasibility}
Assume we are given $0 < \epsilon, \delta < 1$, and $T = \Omega\Paren{\poly(\mathcal{S}) \cdot \frac{\log^3(\frac{1}{\epsilon})}{\epsilon^2}}$  observations $\Set{y_i, u_i}_{i \in [T]}$  from a single trajectory of a linear dynamical system $\calL\Paren{ A,B,C,D}$, satisfying the assumptions in Section~\ref{sec:formal-setup}.  Then, as long as the events in Claim~\ref{claim:basic-boundedness} hold, the constraint system $\calC_{\alpha}$ is feasible.

\end{lemma}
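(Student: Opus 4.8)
The plan is to produce one explicit feasible point for $\calC_{\alpha}$ and verify both families of constraints against it. The natural candidate is the tuple of coefficient matrices $\alpha^* = (\alpha_1^*,\dots,\alpha_s^*)$ for which the matrix polynomial $F_{\alpha^*,k}(A)$ of Definition~\ref{def:matrix-polynomial} vanishes. To build it, I would use that $O_s$ has full column rank $n$ (Definition~\ref{def:system-assumptions}), so its pseudoinverse $O_s^{+}=(O_s^\top O_s)^{-1}O_s^\top$ satisfies $O_s^{+}O_s=I_n$; setting $M = CA^{k+s}O_s^{+}\in\R^{m\times sm}$ and reading off the $s$ consecutive $m\times m$ blocks of $M$ (in reverse order) as $\alpha_1^*,\dots,\alpha_s^*$ gives $\alpha_1^* CA^{s-1}+\dots+\alpha_s^* C = MO_s = CA^{k+s}$, i.e. $F_{\alpha^*,k}(A)=0$, with $k=10s$ fixed throughout.

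For the first family of constraints, $\norm{\alpha_j^*}_F^2\le P_0$, I would bound $\norm{\alpha_j^*}_F\le\sqrt m\,\norm M\le\sqrt m\,\norm{CA^{k+s}}/\sigma_{\min}(O_s)$. The denominator is at least $1/\kappa$: since $C$ is the top block of $O_{2s}$ and $\norm C\ge 1$, we have $\sigma_{\max}(O_{2s})\ge\norm C\ge 1$, so $\sigma_{\min}(O_s)\ge\sigma_{\max}(O_{2s})/\kappa\ge 1/\kappa$ by the condition-number hypothesis. For the numerator, $k+s=11s$, so Claim~\ref{claim:boundA} applied with $t=2s$ (hence $t-s=s$) gives $\norm{A^{k+s}}_F\le(\sqrt n\kappa)^{11}$ and thus $\norm{CA^{k+s}}\le\norm C(\sqrt n\kappa)^{11}=\poly(\calS)$. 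Hence $\norm{\alpha_j^*}_F=\poly(\calS)$, which is below $P_0$ once $P_0$ is chosen as a sufficiently large polynomial in $\calS$.

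For the second family, I would substitute $\alpha=\alpha^*$ and $t=iL$ into the expansion of $\hat{y}_{t+k}$ in Fact~\ref{fact:formula}. Because $x_0=0$ and $F_{\alpha^*,k}(A)=0$, the $x_0$ term and the whole tail sum over $i'\ge k+s+1$ drop out, leaving
\[
\hat{y}_{iL+k}=\Big(z_{iL+k}-\sum_{j=1}^{s}\alpha_j^* z_{iL-j}\Big)+\Big(Du_{iL+k}-\sum_{j=1}^{s}\alpha_j^* Du_{iL-j}\Big)+\sum_{i'=1}^{k+s}F_{\alpha^*,k}^{(i'-1)}(A)\big(Bu_{iL+k-i'}+w_{iL+k-i'}\big),
\]
a sum of $O(k+s)=\poly(\calS)$ terms. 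Each matrix coefficient $F_{\alpha^*,k}^{(j')}(A)$ is $\poly(\calS)$: for $j'\le k$ it equals $CA^{j'}$ with $\norm{A^{j'}}_F$ bounded by Claim~\ref{claim:boundA} (exponent at most $10$), and for $k+1\le j'\le k+s-1$ the triangle inequality together with the $\poly(\calS)$ bounds on $\norm{\alpha_\ell^*}$ and on $\norm{A^{j'}}_F,\norm{A^{j'-k-\ell}}_F$ (exponents at most $11$) finishes it. Every control/noise vector appearing is indexed by $iL+c$ with $-s\le c\le k$, hence $\abs c\le 10(k+s)$, so on the events of Claim~\ref{claim:basic-boundedness} each has norm at most $P_0\log(1/\eps)$. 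Multiplying the $\poly(\calS)$ coefficient bound and the $\poly(\calS)$ term count against $P_0\log(1/\eps)$ yields $\norm{\hat{y}_{iL+k}}\le\poly(\calS)\cdot P_0\log(1/\eps)\le P_1\log(1/\eps)$ once $P_1$ is a sufficiently large polynomial in $\calS$ dominating $P_0$ times the fixed polynomial factors above. Thus $\alpha^*$ is feasible.

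The main difficulty — in fact essentially the only nontrivial point — is the norm bound on $\alpha^*$: since $k+s=11s>2s$, one cannot read $\norm{CA^{k+s}}$ off $\sigma_{\max}(O_{2s})$ directly and must instead control $\norm{A^{k+s}}$ via Claim~\ref{claim:boundA}, which is exactly where the assumption that the \emph{eigenvalues} (not the singular values) of $A$ are bounded, packaged into the condition-number bound, is used. Everything else is bookkeeping over the $O(s)$ surviving terms, and it is worth noting that this argument uses only the covariance bounds on the inputs and noise — via Claim~\ref{claim:basic-boundedness} — and no sub-Gaussian tail assumptions.
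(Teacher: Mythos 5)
Your proposal is correct and follows essentially the same route as the paper's proof: construct $\alpha^*$ with $F_{\alpha^*,k}(A)=0$ from the full column rank of $O_s$, bound $\norm{\alpha_j^*}_F$ via $\sigma_{\min}(O_s)\geq 1/\kappa$ and Claim~\ref{claim:boundA}, and then observe that the tail sum in Fact~\ref{fact:formula} vanishes so the remaining $O(k+s)$ terms are controlled by Claim~\ref{claim:basic-boundedness}. The only (harmless) difference is that you make the construction explicit via the pseudoinverse $O_s^{+}$, where the paper simply asserts existence of coefficients with the stated Frobenius norm bound.
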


\begin{proof}
Consider the matrix $CA^{k+s}$.  By Claim~\ref{claim:boundA}, we have
\[
\norm{CA^{k+s}}_F \leq (\sqrt{n}\kappa)^{(k + s)/s} \norm{C} = (\sqrt{n}\kappa)^{11} \norm{C}  \,.
\]
since we defined $k = 10s$, where $s$ is the observability and controlability parameter.  Now we use the bounded condition number of the observability matrix (see Definition~\ref{def:system-assumptions}):  clearly $\sigma_{\max}(O_{2s}) \geq \norm{C} \geq 1$.  Thus, $\sigma_{\min}(O_s) \geq 1/\kappa$.  This means that there must be $m \times m$ matrices $\alpha_1, \dots , \alpha_s$ with 
\begin{equation}\label{eq:coeff-frob-bound}
\norm{\alpha_i}_F \leq \kappa \norm{CA^{k+s}}_F \leq \poly(\sqrt{n}\kappa\norm{C})
\end{equation}
such that 
\begin{equation}\label{eq:matrix-poly-is-zero}
CA^{k+s} = \alpha_1C + \alpha_2CA + \dots + \alpha_s CA^{s-1} \,.
\end{equation}
By choosing $P_0$ appropriately, these $\alpha_i$ clearly satisfy the first set of constraints in $\calC_{\alpha}$ on their Frobenius norm.  It remains to verify that these $\alpha_i$ satisfy the second constraint.  Note that \eqref{eq:matrix-poly-is-zero} implies that $F_{\alpha}(A) = 0$
By Fact~\ref{fact:formula}, we have
\begin{equation*}
\begin{split}
    &\wh{y}_{t + k} =  \left( z_{t + k}  - \sum_{ j = 1}^s \alpha_j z_{t-j} \right) + \left( D u_{t+k}   - \sum_{ j = 1 }^s \alpha_j Du_{t-j} \right)    + \Paren{  F_{\alpha}(A) A^{t - s}  } x_0   \\ & \quad + \sum_{i = 1}^{k+s}  F_{\alpha}^{(i-1)}(A)(Bu_{t+k- i} + w_{t+k  - i}) + \sum_{i = k+s + 1}^{t + k } F_{\alpha }(A) A^{i  - (k+s + 1)}(Bu_{t+k- i} + w_{t+k  - i}) \,.
\end{split}
\end{equation*}
The last sum in the above is $0$ since $F_{\alpha}(A) = 0$.  For the remaining terms, there are a total of $\poly(k+s)$ terms and each one can be upper bounded in terms of
\[
\poly \left( \max_{c \in [-(k+s), k+s]}( \norm{z_{t + c}}, \norm{u_{t + c}}, \norm{w_{t + c}}) , \max_{0 \leq i \leq k+s}(\norm{A^{k+s}}), \max_{i \in [s]}\norm{\alpha_i} , \norm{B} + \norm{C} + \norm{D} \right) \,.
\]
Next, we only need to consider $\hat{y}_{t + k}$ for $t = iL$ for $i \in [100snm^2K \log L ]$ and thus we can invoke Claim~\ref{claim:basic-boundedness} to bound the first quantity above.  We can use Claim~\ref{claim:boundA} to bound the second quantity and by definition we have $\norm{\alpha_i} \leq P_0$.  Overall, we can upper bound $\hat{y}_{iL + k}$ for all $i \in [100snm^2K \log L ]$ as $\poly(\calS, P_0) \log(1/\eps)$ and thus the solution we have constructed is feasible as long as we have chosen $P_1$ sufficiently large.
\end{proof}

\subsection{Analysis}

Next, we argue that any  $\alpha$ that is feasible for $\calC_{\alpha}$ must actually be useful for stabilizing the system.  To do this, we introduce the following potential.

\begin{definition}[Anti-Concentration Potential]
For an integer $l$ and coefficients $\alpha = (\alpha_1, \dots , \alpha_s)$, define the function 
\[
G_{\alpha , l} = \sum_{i = 0}^l \norm{F_{\alpha}(A)A^i B}_F^2 \,.
\]
\end{definition}

$G_{\alpha,L}$ is a potential measuring the variance of $\hat{y}$.  We will show that with high probability over the randomness of the $u_t,w_t, z_t$, any $\alpha$ that is feasible for $\calC_{\alpha}$ must have $G_{\alpha , l}$ be small.  

First, we express $G_{\alpha , l}$ as the variance of a random variable that naturally arises when computing $\hat{y}_t$ (using the formula in Fact~\ref{fact:formula}).
\begin{lemma}[Potential captures Variance]\label{claim:potential-is-variance}
Given $l< t \in \mathbb{N}$, consider the random variable 
\[
\gamma_{t,l} = \sum_{i = 0}^l F_{\alpha}(A) A^i B u_{t - i} \,.
\]
Then we have
\[
\textsf{Tr}\left[ \EE[ \gamma_{t,l} \gamma_{t,l}^\top] \right] = G_{\alpha , l} \,.
\]
\end{lemma}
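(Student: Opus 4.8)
The plan is a direct computation exploiting the independence and isotropy of the control inputs. First I would expand the outer product as a double sum,
\[
\gamma_{t,l}\gamma_{t,l}^\top = \sum_{i=0}^l \sum_{j=0}^l \big(F_\alpha(A)A^i B\big)\, u_{t-i} u_{t-j}^\top \,\big(F_\alpha(A)A^j B\big)^\top,
\]
and take expectations. Since $0 \le i,j \le l < t$, the indices $t-i$ are pairwise distinct as $i$ ranges over $\{0,\dots,l\}$, so by the independence of the $u$'s (Definition~\ref{def:distributional-assumptions}, and using that they are mean zero) every off-diagonal term $i \ne j$ satisfies $\EE[u_{t-i}u_{t-j}^\top] = \EE[u_{t-i}]\EE[u_{t-j}]^\top = 0$. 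For the diagonal terms, isotropy of $\calD_u$ gives $\EE[u_{t-i}u_{t-i}^\top] = \Sigma_{\calD_u} = I$. Hence
\[
\EE[\gamma_{t,l}\gamma_{t,l}^\top] = \sum_{i=0}^l \big(F_\alpha(A)A^i B\big)\big(F_\alpha(A)A^i B\big)^\top.
\]

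Then I would take the trace of both sides and use linearity together with the identity $\textsf{Tr}[MM^\top] = \norm{M}_F^2$, valid for each $M = F_\alpha(A)A^i B$, to obtain
\[
\textsf{Tr}\big[\EE[\gamma_{t,l}\gamma_{t,l}^\top]\big] = \sum_{i=0}^l \textsf{Tr}\big[\big(F_\alpha(A)A^i B\big)\big(F_\alpha(A)A^i B\big)^\top\big] = \sum_{i=0}^l \norm{F_\alpha(A)A^i B}_F^2 = G_{\alpha,l},
\]
which is exactly the claimed identity.

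There is essentially no hard step here; the argument is routine linear algebra plus second-moment bookkeeping. The only points requiring a small amount of care are: (i) checking that the indices $t-i$ are genuinely distinct (which follows since $l < t$) so that the cross-covariances vanish; and (ii) being consistent about transpose versus conjugate transpose when $A,B,C$ have complex entries — one should read $\norm{M}_F^2 = \textsf{Tr}[M M^\dagger]$ and pair it against $\EE[u u^\dagger] = I$ for the real, isotropic inputs, so that the bookkeeping goes through verbatim. Neither of these is a genuine obstacle, so I would expect this lemma to be dispatched in a few lines along the lines above.
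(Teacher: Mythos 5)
Your proposal is correct and matches the paper's proof essentially verbatim: both expand $\gamma_{t,l}\gamma_{t,l}^\top$, kill the cross terms via independence and mean-zero of the $u_i$, use isotropy $\EE[u_iu_i^\top]=I$ for the diagonal terms, and conclude by linearity of trace and $\textsf{Tr}[MM^\top]=\norm{M}_F^2$. No further comment is needed.
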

\begin{proof}
Since the $u_i$ are independent $\EE[u_iu_j] = 0$ for $i \neq j$ and $\expecf{}{u_i u_i^\top} = I$ for all $i$,  we have
\begin{align*}
\EE[ \gamma_{t,l} \gamma_{t,l}^\top] &= \EE\left[ \left(\sum_{i = 0}^l F_{\alpha}(A) A^i B u_{t - i}\right) \left(\sum_{i = 0}^l F_{\alpha}(A) A^i B u_{t - i}\right)^\top \right] \\ &= \EE\left[ \sum_{i = 0}^l \left(F_{\alpha}(A) A^i B u_{t - i}\right) \left( F_{\alpha}(A) A^i B u_{t - i}\right)^\top \right] \\ & =  \sum_{i = 0}^l ((F_{\alpha}(A) A^i B) ((F_{\alpha}(A) A^i B)^\top \,.
\end{align*}
Now taking the trace of both sides, and using the linearity of trace, 

\begin{equation*}
\begin{split}
    \textsf{Tr}\left[ \expecf{}{\gamma_{t,l} \gamma_{t,l}^\top } \right] & = \sum_{i = 0}^{l}  \textsf{Tr} \left[ ((F_{\alpha}(A) A^i B) ((F_{\alpha}(A) A^i B)^\top \right] \\
    & = \sum_{i = 0}^l \norm{F_{\alpha}(A)A^i B}_F^2 = G_{\alpha , l}
\end{split}
\end{equation*}
as desired.
\end{proof}

Next, we show that if the potential $G_{\alpha, L}$ is large, with high probability, there must be a violated constraint in $\calC_{\alpha}$. 

\begin{lemma}\label{lem:potential-bound-v2}
Let $\alpha = (\alpha_1, \dots , \alpha_s)$ be a fixed sequence with $\norm{\alpha_i}_F \leq P_0$ and $G_{\alpha, L}  > m(100P_1 \log (1/\eps))^2$.  Now, consider a trajectory of length $T$, and the corresponding observations and inputs, $\Set{ y_i, u_i }_{i \in [T]}$, from the LDS $\calL(A,B,C,D)$ after fixing this sequence and let $\zeta$ be the event that Lemma \ref{claim:basic-boundedness} holds. Conditioned on $\zeta$,  with $1 - (1/L)^{10s n m^2}$ probability, there exists some integer $1 \leq i \leq 10^2snm^2 K \log L$ such that
\[
\norm{\hat{y}_{iL + k}} \geq 2P_1 \log(1/\eps) \,.
\]
\end{lemma}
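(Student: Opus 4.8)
The plan is to prove the contrapositive with high probability: conditioned on $\zeta$, if $G_{\alpha,L}$ is large then with overwhelming probability some constraint $\norm{\hat y_{iL+k}}\le P_1\log(1/\eps)$ of $\calC_\alpha$ is violated, in fact by a factor of $2$. The mechanism I would use is to isolate, inside each $\hat y_{iL+k}$, a ``fresh'' random vector $V_i$ --- a linear image of a block of control inputs --- whose covariance has trace at least $G_{\alpha,L}$, use hypercontractivity to show $V_i$ is anti-concentrated, and choose the $V_i$ over a large family of indices $i$ so that they are mutually independent, so the resulting constant-probability events ``$\norm{\hat y_{iL+k}}$ is large'' can be multiplied. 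Concretely, start from Fact~\ref{fact:formula} with $x_0=0$, writing $\hat y_{iL+k}=X_i+\sum_{j\ge 0}F_\alpha(A)A^jB\,u_{(iL-s-1)-j}+\sum_{j\ge 0}F_\alpha(A)A^j w_{(iL-s-1)-j}$, where $X_i$ collects the $z$- and $Du$-combinations and the $O(k+s)$ low-order terms $F_\alpha^{(i'-1)}(A)(Bu+w)$. Exactly as in the proof of Lemma~\ref{lem:feasibility} --- bounding powers of $A$ via Claim~\ref{claim:boundA}, using $\norm{\alpha_j}_F\le P_0$, and invoking $\zeta$ (Claim~\ref{claim:basic-boundedness}) to bound the finitely many control/noise vectors appearing in $X_i$ --- one gets $\norm{X_i}\le \poly(\calS,P_0)\log(1/\eps)\le 0.1\,P_1\log(1/\eps)$ since $P_1\gg P_0$.

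\textbf{Fresh blocks and anti-concentration.} Next I would pass to the $2L$-spaced subset $\mathcal{I}=\{2,4,6,\dots\}\cap[10^2snm^2K\log L]$ and, for each $i\in\mathcal{I}$, set $V_i=\sum_{j=0}^{L'}F_\alpha(A)A^jB\,u_{(iL-s-1)-j}$ with $L'=2L-\poly(s)\ge L$, chosen so that the control inputs $V_i$ uses (those indexed in $[(iL-s-1)-L',\,iL-s-1]$) are pairwise disjoint across $i\in\mathcal{I}$ and disjoint from the $O(s)$-window of inputs appearing in any $X_{i'}$; the $2L$ spacing makes this a routine index check, and with $R_i:=\hat y_{iL+k}-V_i$ one gets $V_i\perp R_i$. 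By Lemma~\ref{claim:potential-is-variance}, $\textsf{Tr}(\Sigma_{V_i})=G_{\alpha,L'}\ge G_{\alpha,L}>m(100P_1\log(1/\eps))^2$, and since $V_i$ is a linear image of i.i.d.\ $(4,2,K)$-hypercontractive inputs it is $(4,2,K)$-hypercontractive by Lemma~\ref{claim:sum-hypercontractive}. Taking $\theta$ a unit top eigenvector of $\Sigma_{V_i}$ with $\sigma^2=\theta^\top\Sigma_{V_i}\theta\ge \tfrac1m\textsf{Tr}(\Sigma_{V_i})>(100P_1\log(1/\eps))^2$, the scalar $\langle\theta,V_i\rangle/\sigma$ has mean $0$, variance $1$, and fourth moment $\le K$, so Lemma~\ref{claim:anti-concentration} (applied conditionally on $R_i$, with $\beta=-\langle\theta,R_i\rangle/\sigma$) gives $\Pr[\,\norm{\hat y_{iL+k}}\ge 0.1\sigma \mid R_i\,]\ge \tfrac1{10K}$, and $0.1\sigma>10P_1\log(1/\eps)\ge 2P_1\log(1/\eps)$.

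\textbf{Decoupling across indices.} Ordering $\mathcal{I}=\{i_1<i_2<\cdots\}$ and letting $E_r=\{\norm{\hat y_{i_rL+k}}\ge 2P_1\log(1/\eps)\}$: because the control inputs used by $V_{i_r}$ are disjoint from those in $R_{i_r}$ and from those in every $\hat y_{i_1L+k},\dots,\hat y_{i_{r-1}L+k}$, the bound of the previous paragraph survives conditioning on $E_1^c\cap\cdots\cap E_{r-1}^c$ (this event is a function of randomness independent of $V_{i_r}$), so $\Pr[E_r\mid E_1^c\cap\cdots\cap E_{r-1}^c]\ge \tfrac1{10K}$ for every $r$. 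By the chain rule, $\Pr[\bigcap_r E_r^c]\le (1-\tfrac1{10K})^{|\mathcal{I}|}\le L^{-\Omega(snm^2)}$; tracking constants --- the constraint count $10^2snm^2K\log L$ is calibrated so that this exponent beats $10snm^2$, and a finer index argument that uses all indices rather than only the even ones removes the factor-$2$ loss coming from the spacing --- yields $\Pr[\exists i:\norm{\hat y_{iL+k}}\ge 2P_1\log(1/\eps)\mid \zeta]\ge 1-(1/L)^{10snm^2}$, as claimed.

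\textbf{Main obstacle.} The step I expect to be hardest is the index bookkeeping underlying the extraction of the $V_i$: one needs them mutually independent (and independent of the $X_i$) while still having $\textsf{Tr}(\Sigma_{V_i})\ge G_{\alpha,L}$. This is delicate because the tail $\sum_j F_\alpha(A)A^jB\,u_{(iL-s-1)-j}$ of each $\hat y_{iL+k}$ runs all the way back to time $0$ and hence overlaps the tails and low-order windows of every earlier $\hat y_{i'L+k}$, and because one cannot simply truncate $O(s)$ terms off $G_{\alpha,L}$: a single term $\norm{F_\alpha(A)A^jB}_F^2$ can dominate the entire potential, so the spacing must be exploited to create room rather than discarding terms. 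Given that decomposition is in place, the hypercontractivity-to-anti-concentration step and the chain-rule multiplication are short.
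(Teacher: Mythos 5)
Your proposal is correct and follows essentially the same route as the paper's proof: split $\hat{y}_{iL+k}$ into a bounded part, a fresh linear image $V_i$ of a block of inputs with $\textsf{Tr}(\Sigma_{V_i})\geq G_{\alpha,L}$, and an independent remainder, then apply Lemma~\ref{claim:sum-hypercontractive} and Lemma~\ref{claim:anti-concentration} and multiply conditional probabilities over the spaced indices. Your extra care with the index bookkeeping (using $2L$-spaced blocks so the fresh inputs are genuinely disjoint from everything the earlier events depend on) is if anything tighter than the paper's treatment of that point, and the resulting factor-of-two loss in the exponent ($L^{-5snm^2}$ rather than $L^{-10snm^2}$) is harmless for the downstream union bound over the net of size $L^{4nsm^2}$.
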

\begin{proof}

Recall the formula in Fact~\ref{fact:formula}.  We can write it as 
\[
\wh{y}_{t + k} = X + Y + Z + \sum_{i = k+s + 1}^{t + k } F_{\alpha}(A) A^{i  - (k+s + 1)}(Bu_{t+k- i} + w_{t+k  - i}) \,,
\]
where $X = \left( z_{t + k}  - \sum_{ j = 1}^s \alpha_j z_{t-j} \right)  Y= \left( D u_{t+k}   - \sum_{ j = 1 }^s \alpha_j Du_{t-j} \right) $ and $Z=\sum_{i = 1}^{k+s}  F_{\alpha}^{(i-1)}(A)(Bu_{t+k- i} + w_{t+k  - i}) $. 
Now consider $t = iL$ for some $i \in [100snm^2K \log L]$.  Note that since we are conditioning on the event in Claim~\ref{claim:basic-boundedness}, all entries of $X,Y,Z$ are bounded by some fixed polynomial in the parameters in $\calS$ and $\norm{\alpha_i}$.  Since $\norm{\alpha_i}_F \leq P_0$ for all $i$, we can choose $P_1$ larger than this polynomial i.e. we can ensure $\norm{X}, \norm{Y}, \norm{Z} \leq P_1$.  Now we can break the last sum as follows:

\begin{equation}
    \begin{split}
        \sum_{i = k+s + 1}^{t + k } &  F_{\alpha}(A) A^{i  - (k+s + 1)}(Bu_{t+k- i} + w_{t+k  - i}) \\
        & = \underbrace{ \sum_{i = k+s + 1}^{k+s + L + 1}F_{\alpha}(A) A^{i  - (k+s + 1)}Bu_{t+k- i}  }_{V_t} \\
        & \hspace{0.2in} +  \underbrace{ \sum_{i =k+s + L + 2}^{t + k } F_{\alpha}(A) A^{i  - (k+s + 1)}Bu_{t+k- i} + \sum_{i = k+s + 1}^{t + k} F_{\alpha}(A) A^{i  - (k+s + 1)}w_{t+k  - i} }_{W_t} \,.
    \end{split}
\end{equation}

Let the first term above $V_t$ and the sum of the second two terms $W_t$.  Then $V_t$ is a random variable and it follows from Lemma~\ref{claim:potential-is-variance} that the  covariance matrix $\Sigma_V$ satisfies $\textsf{Tr}(\Sigma_V) =  G_{\alpha , L}$.  Note that the random variable $V_t$ is independent of $W_t$ since they depend on disjoint sets of $u_i, w_i$.

Further, we observe that for any fixed value of $W_t$, the overall norm is small, i.e.  $\norm{\wh{y}_{t+k}} \leq 2P_1 \log (1/\eps)$, only if $V_t$ lands in some ball of radius $4P_1 \log (1/\eps)$ around the fixed value of $W_t$. We bound the probability of such an event by invoking anti-concentration properties we can derive from $u_t$ satisfying $(4,2,K)$-hypercontractivity. 
In particular, let $v \in \R^m$ be a unit vector such that 
\[
v^T \Sigma_{V_t} v \geq G_{\alpha,L}/m \geq (100P_1 \log (1/\eps))^2 \,.
\]
Observe, such a direction clearly exists since  $\Sigma_V$ is an $m \times m$ PSD matrix with trace $G_{\alpha, L}$.  Now by Claim~\ref{claim:sum-hypercontractive}, the random variable $V_t$ is $(4,2,K)$-hypercontractive.  Thus, we can apply Claim~\ref{claim:anti-concentration} to the random variable $\la v, V_t \ra/(100 P_1 \log (1/\eps))$.  In order for $V_t$ to land in a fixed ball of radius $4P_1 \log (1/\eps)$, its projection on direction $v$ must land in some fixed interval of width $8 P_1 \log(1/\eps)$.  However, by Claim~\ref{claim:anti-concentration} applied to  $\la v, V_t \ra/(100 P_1 \log (1/\eps))$, this probability is at most $1 - 1/(10K)$. 

Furthermore, the above reasoning about $u_{t - s - L}, \dots , u_{t - s}$ holds for any choices of $u_1, \dots , u_{t-s - L - 1}$ and $w_1, \dots , w_{t-s}$.  Thus, we can multiply the conditional probabilities over different choices of $t$, namely $L, 2L, \dots , 10^2snm^2 K L\log L$.  We get that  
\begin{equation}
\begin{split}
&\Pr\big[\norm{\hat{y}_{iL + k}} \leq 2P_1\log(1/\eps) \quad \forall i \in [10^2snm^2 K \log L]\big]  \\ &= \prod_{i = 1}^{10^2snm^2 K \log L}\Pr\left[\norm{\hat{y}_{iL + k}} \leq 2P_1\log(1/\eps) \Big\vert \norm{\hat{y}_{jL + k}} \leq 2P_1\log(1/\eps) \forall j <  i\right] \\ &\leq \left(1 - \frac{1}{10K}\right)^{10^2s n m^2K \log L} \leq \frac{1}{L^{10s nm^2}}
\end{split}
\end{equation}
where in the above we used that 
\[
\Pr\left[\norm{\hat{y}_{iL + k}} \leq 2P_1\log(1/\eps) \Big\vert \norm{\hat{y}_{jL + k}} \leq 2P_1\log(1/\eps) \forall j <  i\right] \leq 1 - \frac{1}{10K}
\]
because the events being conditioned on depend only on earlier $u_t, w_t, z_t$ and we showed that $[\norm{\hat{y}_{iL + k}} \leq 2P_1\log(1/\eps)$ with at most $1 - \frac{1}{10K}$ probability regardless of what these earlier realizations are. Thus, with $1 - (1/L)^{10s n m^2}$ probability, the given $\alpha$ will actually violate some constraint and not be a feasible solution. 

Note that we take $i \in [10^2 snm^2K \log L]$ where the dominant term is the logarithmic dependence on $L$.  We could not take for example a polynomial dependence on $L$ as this would incur a polynomial dependence in $L$ on bounds on $\Norm{ u_{iL+c}} , \Norm{ w_{iL+c}}, \Norm{ z_{iL+c}}$ in \pref{claim:basic-boundedness}.  This would be unacceptable for the concentration of our estimates of the markov parameters in \pref{lem:variance-bound-v2}.   

\end{proof}

Next, we relate the potential function $G_{\alpha ,L}$ to the variance of our actual estimators for the Markov parameters.  First, we will need to define a similar-looking potential and relate it to $G_{\alpha , L}$. 

\begin{definition}
For an integer $l$ and matrix-valued coefficients $\alpha = (\alpha_1, \dots , \alpha_s)$, define the function 
\[
H_{\alpha , l} = \sum_{i = 0}^l \norm{F_{\alpha}(A)A^i}_F^2 \,.
\]
\end{definition}

Intuitively, this potential simply does not include the matrix $B$ and thus is polynomially related to $G_{\alpha, l}$, since the condition number of the controllability matrix is bounded. We make this precise as follows:

\begin{lemma}[Potential without $B$]\label{lem:alternate-potential}
For a fixed sequences of matrices $\alpha = \Paren{ \alpha_1, \alpha_2, \ldots, \alpha_s}$, we have
\[
H_{\alpha, l} \leq \kappa^2 s  G_{\alpha , l + s}.
\]
\end{lemma}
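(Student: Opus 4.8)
The plan is to exploit the bounded condition number of the controllability matrix $Q_s$ to ``insert'' a $B$ into each term of $H_{\alpha,l}$. Fix any index $i$ in the range $0 \le i \le l$. The key observation is that $F_\alpha(A)A^i$ is an $m \times n$ matrix, and we want to lower bound the Frobenius norm of $F_\alpha(A)A^{i'} B$ (summed over a few nearby indices $i'$) in terms of $\Norm{F_\alpha(A)A^i}_F$. To do this, note that
\[
F_\alpha(A) A^i Q_s = \begin{bmatrix} F_\alpha(A)A^i B & F_\alpha(A)A^{i+1}B & \cdots & F_\alpha(A)A^{i+s-1}B \end{bmatrix},
\]
so that $\Norm{F_\alpha(A)A^i Q_s}_F^2 = \sum_{j=0}^{s-1}\Norm{F_\alpha(A)A^{i+j}B}_F^2$. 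On the other hand, since $Q_s$ has full row rank with $\sigma_{\min}(Q_s) \ge 1/\kappa$ (this follows from Definition~\ref{def:system-assumptions} exactly as in the proof of Lemma~\ref{lem:feasibility}: $\sigma_{\max}(Q_{2s}) \ge \Norm{B} \ge 1$ forces $\sigma_{\min}(Q_s) \ge 1/\kappa$), we have $\Norm{M Q_s}_F \ge \sigma_{\min}(Q_s)\Norm{M}_F \ge \Norm{M}_F/\kappa$ for any matrix $M$ of compatible dimensions, applied with $M = F_\alpha(A)A^i$. Combining,
\[
\Norm{F_\alpha(A)A^i}_F^2 \le \kappa^2 \Norm{F_\alpha(A)A^iQ_s}_F^2 = \kappa^2 \sum_{j=0}^{s-1}\Norm{F_\alpha(A)A^{i+j}B}_F^2.
\]

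Now I would sum this inequality over $i = 0, 1, \dots, l$. The left-hand side becomes exactly $H_{\alpha,l}$. On the right-hand side, each term $\Norm{F_\alpha(A)A^{i'}B}_F^2$ with $0 \le i' \le l+s-1 \le l+s$ appears at most $s$ times (once for each $j$ with $i' - j$ in range), so
\[
H_{\alpha,l} \le \kappa^2 \sum_{i=0}^{l}\sum_{j=0}^{s-1}\Norm{F_\alpha(A)A^{i+j}B}_F^2 \le \kappa^2 s \sum_{i'=0}^{l+s}\Norm{F_\alpha(A)A^{i'}B}_F^2 = \kappa^2 s\, G_{\alpha, l+s},
\]
which is exactly the claimed bound.

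There is really no serious obstacle here; the only point requiring minor care is the bookkeeping that shows $\sigma_{\min}(Q_s) \ge 1/\kappa$ from the stated condition-number assumption (via Claim~\ref{claim:eigenvalue-bound}-style reasoning, or just the observation that $\sigma_{\max}(Q_{2s}) \ge \Norm{Q_1} = \Norm{B} \ge 1$), and the combinatorial claim that each index $i'$ is counted at most $s$ times in the double sum. Both are routine. If one wanted to be slightly more careful about whether the relevant singular value bound uses $Q_s$ or $Q_{2s}$, one can simply cite the controllability half of Definition~\ref{def:system-assumptions} together with the fact that appending block columns only increases $\sigma_{\max}$; this is the same maneuver already used for $O_s$ in Lemma~\ref{lem:feasibility}.
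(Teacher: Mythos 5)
Your proof is correct and follows essentially the same route as the paper's: lower-bound $\Norm{F_\alpha(A)A^i Q_s}_F$ by $\sigma_{\min}(Q_s)\Norm{F_\alpha(A)A^i}_F \ge \Norm{F_\alpha(A)A^i}_F/\kappa$, expand the block product, and sum over $i$. In fact you are slightly more explicit than the paper about where the factor of $s$ comes from (each index $i'$ being counted at most $s$ times in the double sum), which the paper compresses into ``summing over $l$ gives the result.''
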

\begin{proof}

Recall the controllability matrix $Q_s$ in Definition~\ref{def:controllability}.  Since the maximum singular value of $Q_{2s}$ is at least $1$ (since $\norm{B} \geq 1$), the minimum singular value of $Q_s$ must be at least $1/\kappa$.  Thus, we must have 
\[
\norm{F_{\alpha}(A)A^i}_F^2  \leq \kappa^2 \norm{F_{\alpha}(A)A^iQ^\top_s}_F^2 = \kappa^2 \sum_{j = 0}^{s-1}  \norm{F_{\alpha}(A)A^{i + j}B}_F^2 \,.
\]
\noindent
Summing the above over $l$ then gives the desired inequality.
\end{proof}

Next, for any fixed sequence of $\alpha$'s we show that the variance of our estimator can be bounded in terms of the norm of the $\alpha_i$'s and the two potentials we defined above. In particular, 

\begin{lemma}[Potential to Variance Bound]\label{lem:variance-bound-v2}
For any fixed $\alpha_1, \dots , \alpha_s \in \R^{m \times m}$ and any $0 \leq j \leq k$, we have
\[
\EE\left[  \Norm{ \frac{1}{L}\sum_{t = s + 1}^{s + L }  \hat{y}_{t + k}u_{t + k - j}^\top - X_j}_F^2 \right] \leq \frac{P_1}{L}\left( 1 + \norm{\alpha_1}_F^2 + \dots + \norm{\alpha_s}_F^2 + G_{\alpha,L} + H_{\alpha , L}\right) \,.
\]
where recall $X_j$ is defined as in Definition~\ref{def:estimator-means} and the expectation is over the draws of $u_t, w_t, z_t$.
\end{lemma}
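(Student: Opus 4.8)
The plan is to substitute the closed form for $\wh{y}_{t+k}$ from Fact~\ref{fact:formula} (with the simplification $x_0 = 0$) into the estimator and control the variance of the empirical average one piece at a time. Writing $u := u_{t+k-j}$ and using $0 \le j \le k$, Fact~\ref{fact:formula} lets us write
\[
\wh{y}_{t+k}\,u^\top - X_j \;=\; (\mathrm{I}) + (\mathrm{II}) + (\mathrm{III}) + (\mathrm{IV}),
\]
with $(\mathrm{I}) = \bigl(z_{t+k} - \sum_{l=1}^s \alpha_l z_{t-l}\bigr)u^\top$, $(\mathrm{II}) = \bigl(Du_{t+k} - \sum_{l=1}^s \alpha_l Du_{t-l}\bigr)u^\top$, $(\mathrm{III}) = \sum_{i=1}^{k+s} F_{\alpha}^{(i-1)}(A)(Bu_{t+k-i}+w_{t+k-i})u^\top$, and $(\mathrm{IV}) = \sum_{i=k+s+1}^{t+k} F_{\alpha}(A)A^{i-k-s-1}(Bu_{t+k-i}+w_{t+k-i})u^\top$, where $X_j$ has been subtracted from the unique summand that carries it: by Fact~\ref{fact:markov-params-formula} this is the $Du_{t+k}u_{t+k}^\top$ summand of $(\mathrm{II})$ when $j = 0$, and the $i = j$ summand $CA^{j-1}Bu_{t+k-j}u_{t+k-j}^\top$ of $(\mathrm{III})$ when $j \ge 1$ (here $j\le k$ gives $F_{\alpha}^{(j-1)}(A) = CA^{j-1}$). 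After this subtraction each of the four pieces has mean zero, so using $\norm{a_1+\dots+a_4}_F^2 \le 4(\norm{a_1}_F^2 + \dots + \norm{a_4}_F^2)$ it suffices to bound $\EE\bigl[\norm{\tfrac1L\sum_{t=s+1}^{s+L}(\text{piece})}_F^2\bigr]$ for each piece and add up.

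The structural fact used throughout is that when one expands $\EE\bigl[\norm{\tfrac1L\sum_t(\text{piece})}_F^2\bigr] = \tfrac1{L^2}\sum_{t,t'}\EE[\ip{\cdot}{\cdot}_F]$, a cross term $(t,t')$ survives only when the multiset of input/noise indices appearing for time $t$ meets that for time $t'$, since $u,w,z$ are independent with mean zero. Because $j \le k$, in piece $(\mathrm{IV})$ the ``measurement'' index $t+k-j$ is strictly larger than every ``input'' index $t+k-i$ with $i \ge k+s+1$; hence the two inputs inside any single summand of $(\mathrm{IV})$ are distinct (so $(\mathrm{IV})$ is honestly mean zero) and an off-diagonal cross term can survive only by matching input indices, which forces $t = t'$. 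Using $\EE[u_au_a^\top] = I$ and $\EE[u_au_b^\top] = 0$ for $a \ne b$, the $Bu$-part of $(\mathrm{IV})$ then contributes exactly $\tfrac{p}{L^2}\sum_{t=s+1}^{s+L}\sum_{r=0}^{t-s-1}\norm{F_\alpha(A)A^r B}_F^2 \le \tfrac{p}{L}G_{\alpha,L}$ (using $t-s-1 \le L-1$ and the definition of $G_{\alpha,L}$), while the $w$-part of $(\mathrm{IV})$, which is mean zero because $w \perp u$, contributes $\le \tfrac{p\sigma_w}{L^2}\sum_{t=s+1}^{s+L}\sum_{r=0}^{t-s-1}\norm{F_\alpha(A)A^r}_F^2 \le \tfrac{p\sigma_w}{L}H_{\alpha,L}$.

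For the ``local'' pieces $(\mathrm{I}),(\mathrm{II}),(\mathrm{III})$ every index involved lies in a window of width $O(k+s)$ around $t$, so all cross terms vanish once $\abs{t-t'} > 2(k+s)$; the double sum then has only $O((k+s)L)$ nonzero entries, each of which we bound by Cauchy--Schwarz in terms of the operator norms of the coefficient matrices and the second moments of the inputs and noise. Concretely we use Claim~\ref{claim:boundA} to get $\norm{F_\alpha^{(i-1)}(A)B}_F, \norm{F_\alpha^{(i-1)}(A)}_F \le \poly(\calS)\bigl(1 + \max_l\norm{\alpha_l}_F\bigr)$ for $i \le k+s+1$, the bounds $\Sigma_{\calD_w} \preceq \sigma_w I$, $\Sigma_{\calD_z} \preceq \sigma_z I$, and---for the two diagonal summands $D(u_{t+k}u_{t+k}^\top - I)$ and $CA^{j-1}B(u_{t+k-j}u_{t+k-j}^\top - I)$---the fourth-moment control provided by $(4,2,K)$-hypercontractivity of $\calD_u$ (Definition~\ref{def:hypercontractivity}; cf.\ Lemma~\ref{claim:sum-hypercontractive}), which gives $\EE\norm{M(u_au_a^\top - I)}_F^2 \le \poly(\calS)$ whenever $\norm{M}_F \le \poly(\calS)$. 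Each of these three pieces thus contributes at most $\tfrac1{L^2}\cdot O((k+s)L)\cdot\poly(\calS)\bigl(1 + \sum_l\norm{\alpha_l}_F^2\bigr) = \tfrac{\poly(\calS)}{L}\bigl(1 + \sum_l\norm{\alpha_l}_F^2\bigr)$. Summing the contributions of all four pieces gives $\EE\bigl[\norm{\tfrac1L\sum_t \wh{y}_{t+k}u_{t+k-j}^\top - X_j}_F^2\bigr] \le \tfrac{\poly(\calS)}{L}\bigl(1 + \sum_l\norm{\alpha_l}_F^2 + G_{\alpha,L} + H_{\alpha,L}\bigr)$, which is the claim once $P_1$ is taken to be a sufficiently large polynomial in $\calS$.

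The bulk of the work, and the step most prone to slip-ups, is this combinatorial accounting of which $(t,t')$ cross terms survive in each piece, together with verifying that none of them degrades the $1/L$ gain into a constant. The essential enabling hypothesis is $j \le k$, which keeps the measurement index $t+k-j$ to the right of all input indices produced by the high powers $A^{i-k-s-1}$ in $(\mathrm{IV})$, so that those summands are genuinely mean zero and the off-diagonal mass collapses onto the diagonal $t = t'$. Hypercontractivity enters only weakly---through fourth moments of the two ``squared-input'' diagonal terms---while second-moment (covariance) bounds suffice everywhere else.
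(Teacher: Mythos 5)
Your proof is correct and follows essentially the same route as the paper's: expand $\wh{y}_{t+k}$ via Fact~\ref{fact:formula}, use independence and mean-zero structure of $u_t,w_t,z_t$ to kill almost all $(t,t')$ cross terms, and bound the surviving diagonal sums by $\poly(\calS)(1+\sum_l\norm{\alpha_l}_F^2)$ for the local terms and by $p\,G_{\alpha,L}$ and $p\sigma_w H_{\alpha,L}$ for the long-range terms. The only difference is bookkeeping: you split into four pieces and handle the $O((k+s)L)$ surviving local cross terms by Cauchy--Schwarz, whereas the paper expands globally and disposes of the coincidence terms $\textsf{Tr}(M_1u_iu_j^\top u_iu_j^\top M_2^\top)$ via an AM--GM step; both yield the stated bound after absorbing constants into $P_1$.
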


\begin{proof}
We can express $\hat{y}_{t + k}$ using Fact~\ref{fact:formula}
\begin{equation}\label{eq:formula}
\begin{split}
    \wh{y}_{t + k} = &   \left( z_{t + k}  - \sum_{ i = 1}^s \alpha_i z_{t-i} \right) + \left( D u_{t+k}   - \sum_{ i = 1 }^s \alpha_i Du_{t-i} \right)  \\ & + \sum_{i = 1}^{k+s}  F_{\alpha}^{(i-1)}(A)\Paren{ Bu_{t+k- i} + w_{t+k  - i} } + \sum_{i = k+s + 1}^{t + k } F_{\alpha}(A) A^{i  - (k+s + 1)}\Paren{ Bu_{t+k- i} + w_{t+k  - i}}
\end{split}
\end{equation}
Also recall that 
\[
\EE \left[ \hat{y}_{t + k} u_{t+k - j}^\top \right]  = X_j\,.
\]
We then use independence of the $u_i,w_i,z_i$ to compute the variance.  Observe,

\begin{equation*}
\begin{split}
    \frac{1}{L}\sum_{t = s+1}^{s + L} \hat{y}_{t + k} u_{t + k - j}^\top  =  \frac{1}{L}\sum_{t = s+1}^{s + L} &  \left( z_{t + k} u_{t+k-j}^\top  - \sum_{ i = 1}^s \alpha_i z_{t-i} u_{t+k-j}^\top  \right) + \left( D u_{t+k} u_{t+k-j}^\top    - \sum_{ i = 1 }^s \alpha_i Du_{t-i}u_{t+k-j}^\top  \right) \\
    & +  \sum_{i = 1}^{k+s}  F_{\alpha}^{(i-1)}(A) \Paren{ Bu_{t+k- i} u_{t+k-j}^\top  + w_{t+k  - i} u_{t+k-j}^\top } \\
    & + \sum_{i = k+s + 1}^{t + k } F_{\alpha}(A) A^{i  - (k+s + 1)}\Paren{ Bu_{t+k- i} u_{t+k-j}^\top + w_{t+k  - i}  u_{t+k-j}^\top}
\end{split}
\end{equation*}
 and each term
is a quadratic of the form $M u_iu_j^\top , M w_iu_j^\top ,  M z_iu_j^\top  $ for some matrix $M$.  Note that the expectations of these terms are $0$ except $Mu_iu_i^\top$ which has expectation $M$.  Thus, when we compute the variance 
\begin{equation}
\label{eqn:var-comp}
\begin{split}
\EE&\left[  \Norm{ \frac{1}{L}\sum_{t = s + 1}^{s + L }  \hat{y}_{t + k}u_{t + k - j}^\top - X_j}_F^2 \right] \\
& = \expecf{}{\textsf{Tr}\Paren{ \Paren{ \frac{1}{L}\sum_{t = s + 1}^{s + L }  \hat{y}_{t + k}u_{t + k - j}^\top - X_j }^\top \Paren{ \frac{1}{L}\sum_{t = s + 1}^{s + L }  \hat{y}_{t + k}u_{t + k - j}^\top - X_j }  } } \\
& = \expecf{}{ \underbrace{ \textsf{Tr}\Paren{ \frac{1}{L^2} \Paren{ \sum_{t = s+1}^{s+L} u_{t+k+j} \hat{y}_{t+k}^\top   } \cdot \Paren{ \sum_{t = s+1}^{s+L}  \hat{y}_{t+k} u_{t+k+j}^\top   }   } }_{\eqref{eqn:var-comp}.(1)} } - \textsf{Tr}\Paren{ X_j^\top X_j  }
\end{split}
\end{equation}
We observe that  the expression \eqref{eqn:var-comp}.(1) has terms of the following types 
\begin{align*}
\textsf{Tr}\left(M_1 (u_{i_1}u_{j_1}^\top -  1_{i_1 = j_1} \cdot I) (u_{j_2}u_{i_2}^\top -   1_{i_2 = j_2} \cdot I) M_2^\top \right), \textsf{Tr}\left(M_1 w_{i_1}u_{j_1}^\top u_{j_2}w_{i_2}^\top M_2^\top \right),  \textsf{Tr}\left(M_1 z_{i_1}u_{j_1}^\top u_{j_2}z_{i_2}^\top M_2^\top \right),  \\ \textsf{Tr}\left(M_1 (u_{i_1}u_{j_1}^\top -  1_{i_1 = j_1} \cdot I)  u_{j_2}w_{i_2}^\top M_2^\top \right), \textsf{Tr}\left(M_1 (u_{i_1}u_{j_1}^\top -  1_{i_1 = j_1} \cdot I)  u_{j_2}z_{i_2}^\top M_2^\top  \right), \textsf{Tr}\left(M_1 w_{i_1}u_{j_1}^\top u_{j_2}z_{i_2}^\top M_2^\top \right)
\end{align*}
and their transposes.   The expectations of each of these terms is $0$ except when $i_1 = i_2$ and $j_1 = j_2$ or terms of the form
\[
\textsf{Tr}\left(M_1 u_{i}u_{j}^\top u_{i}u_{j}^\top M_2^\top \right)
\]
for $i \neq j$.  Each term $ u_iu_j^\top  , w_iu_j^\top ,  z_iu_j^\top  $ appears at most twice since the choice of $j$ uniquely determines the index $t$ in the sum $\sum_{t = s+1}^{s + L} \hat{y}_{t + k} u_{t + k - j}^\top$ and then in the expression for $\hat{y}_{t+k}$ in \eqref{eq:formula}, each individual variable appears at most twice.  Also, note that 
\[
\textsf{Tr}\left(M_1 u_{i}u_{j}^\top u_{i}u_{j}^\top M_2^\top \right) \leq \frac{1}{2} \textsf{Tr}\left(M_1 u_{i}u_{j}^\top u_{j}u_{i}^\top M_1^\top  + M_2 u_{j}u_{i}^\top u_{i}u_{j}^\top M_2^\top \right)
\]
so using the above inequality, we can eliminate all cross terms and only consider terms where $i_1 = j_1$ and $i_2 = j_2$.  Overall, we have
\begin{equation}
\label{eqn:consice-var-bound}
\begin{split}
\EE & \left[  \Norm{ \frac{1}{L}\sum_{t = s + 1}^{s + L }  \hat{y}_{t + k}u_{t + k - j}^\top - X_j}_F^2   \right]  \\
& \leq \underbrace{ \frac{4}{L^2}\sum_{t = s+1}^{s + L}\EE\left[\Norm{z_{t+k}u_{t + k - j}^\top}_F^2 + \sum_{i = 1}^s  \Norm{\alpha_i z_{t- i} u_{t + k - j}^\top}_F^2 \right]  }_{\eqref{eqn:consice-var-bound}.(1)}  \\ & \quad + \underbrace{ \frac{4}{L^2}\sum_{t = s+1}^{s + L}\EE\left[\Norm{D(u_{t+k} u_{t + k - j}^\top - 1_{j = 0} \cdot I)}_F^2 + \sum_{i = 1}^s  \Norm{D\alpha_i u_{t- i}u_{t + k - j}^\top}_F^2 \right] }_{\eqref{eqn:consice-var-bound}.(2)} \\ & \quad + \underbrace{  \frac{4}{L^2}\sum_{t = s+1}^{s + L}\sum_{i = 1}^{k + s} \EE\left[\Norm{F_{\alpha}^{(i-1)}(A)B(u_{t+k- i} u_{t + k - j}^\top - 1_{i = j} \cdot I )}_F^2 \right] }_{\eqref{eqn:consice-var-bound}.(3)}  \\ 
&\quad + \underbrace{ \frac{4}{L^2}\sum_{t = s+1}^{s + L}\sum_{i = 1}^{k + s }  \EE\left[ \Norm{F_{\alpha}^{(i- 1)}(A)w_{t+k - i} u_{t + k - j}^\top }_F^2\right] }_{\eqref{eqn:consice-var-bound}.(4)} \\ 
& \quad + \underbrace{ \frac{4}{L^2}\sum_{t = s+1}^{s + L}\sum_{i = k + s + 1}^{t + k}  \EE\left[\Norm{ F_{\alpha}(A)A^{i - (k + s + 1)}Bu_{t+k - i} u_{t + k - j}^{\top} }_F^2\right] }_{\eqref{eqn:consice-var-bound}.(5)} \\ & 
\quad + \underbrace{ \frac{4}{L^2}\sum_{t = s+1}^{s + L}\sum_{i = k + s + 1}^{t + k}  \EE\left[ \Norm{ F_{\alpha}(A)A^{i - (k + s + 1)} w_{t + k - i} u_{t + k - j}^{\top} }_F^2 \right] }_{\eqref{eqn:consice-var-bound}.(6)} \,.
\end{split}
\end{equation}

We bound each of the terms above as follows: first we observe that as long as $P_1$ is a sufficiently large polynomial in the system parameters, $\calS$, it follows from Claim~\ref{claim:boundA} that 
\begin{equation*}
    \eqref{eqn:consice-var-bound}.(1) + \eqref{eqn:consice-var-bound}.(2) + \eqref{eqn:consice-var-bound}.(3) + \eqref{eqn:consice-var-bound}.(4) \leq  \frac{P_1 }{L} \Paren{ 1 + \norm{\alpha_1}_F^2 + \dots + \norm{\alpha_s}_F^2}  
\end{equation*}

Next, we use the potentials, $G_{\alpha, L}$ and $H_{\alpha, L}$ to bound the last two terms as follows: 
To bound the last two terms, we have
\begin{equation}
\begin{split}
&\sum_{t = s+1}^{s + L}\sum_{i = k + s + 1}^{t + k}  \EE\left[\norm{ F_{\alpha}(A)A^{i - (k + s + 1)}Bu_{t+k - i} u_{t + k - j}^{\top} }_F^2\right]
\\ &= \sum_{t = s+1}^{s + L}\sum_{i = k + s + 1}^{t + k} \EE\left[ \textsf{Tr} \left( F_{\alpha}(A)A^{i - (k + s + 1)}Bu_{t+k - i} u_{t + k - j}^{\top}  u_{t + k - j} u_{t+k - i}^\top \left( F_{\alpha}(A)A^{i - (k + s + 1)}B \right)^\top \right) \right] 
\\ &= \sum_{t = s+1}^{s + L}\sum_{i = k + s + 1}^{t + k} p \norm{F_{\alpha}(A)A^{i - (k + s + 1)}B  }_F^2  \\ &\leq L p  G_{\alpha,L} \,.
\end{split}
\end{equation}
and similarly
\begin{equation}
\begin{split}
\sum_{t = s+1}^{s + L}\sum_{i = k + s + 1}^{t + k} \EE\left[ \norm{ F_{\alpha}(A)A^{i - (k + s + 1)} w_{t + k - i}u_{t + k - j}^{\top} }_F^2 \right] \leq L p H_{\alpha, L} \,.
\end{split}
\end{equation}
Putting everything together, we conclude that 
\begin{equation*}
\EE\left[  \Norm{ \frac{1}{L}\sum_{t = s + 1}^{s + L }  \hat{y}_{t + k}u_{t + k - j}^\top - X_j}_F^2 \right] \leq \frac{P_1}{L}\left( 1 + \norm{\alpha_1}_F^2 + \dots + \norm{\alpha_s}_F^2 + G_{\alpha,L} + H_{\alpha , L}\right) \,.
\end{equation*}
as desired.
\end{proof}

Unfortunately, we cannot use Lemma~\ref{lem:variance-bound-v2} directly because the choice of $\alpha = (\alpha_1, \dots , \alpha_s)$ that we compute using the program $\calC_{\alpha}$ already depends on the realizations of the $u_t, w_t, z_t$ meaning that there is no more fresh randomness.  
In order to use the randomness over $u_t$, $w_t$ and $z_t$ to bound the variance of our estimator, we need decouple the $\alpha$'s out of the variance expression.

To circumvent this, we will derive a symbolic inequality  from Lemma~\ref{lem:variance-bound-v2} that holds simultaneously for all choices of $\alpha$ and thus can be applied even if $\alpha$ depends on the realizations of $u_t, w_t, z_t$.

Note that $G_{\alpha ,l}$ and $H_{\alpha, l}$ are both quadratic expressions in the $\alpha_i$.  It will be useful to extract out the matrix of coefficients, which we do in the following definition.
\begin{definition}[Coefficient Matrix of a quadratic polynomial]
Define $v_{\alpha} = (1, \alpha_1, \dots , \alpha_s)$ where we view the $\alpha_i$ as formal variables and flatten each of the matrices $\alpha_i$ into a vector and concatenate them so that $v_{\alpha}$ has length $sm^2 + 1$.  Let $G_L$ (respectively $H_L$) be the unique symmetric $(sm^2 + 1) \times (sm^2 + 1)$ matrix such that 
\[
v_{\alpha}^T G_L v_{\alpha} = G_{\alpha , L} \,.
\]
\end{definition}
Note that matrices defined above are unique because we force them to be symmetric so the coefficients of the monomials in the $\alpha_i$ uniquely determine the entries of the matrices.  Also, the entries of $G_l, H_l$ are purely functions of the system parameters $A,B,C,D$.  Now we can prove a symbolic version of Lemma~\ref{lem:variance-bound-v2}.
Intuitively, this symbolic version is a way to decouple the vector valued random variables $u_t, w_t$ and $z_t$ from the matrix random variables $\alpha$, and then use properties of the input distribution on the expressions that are independent of $\alpha$.

\begin{corollary}[Symbolic Matrix Inequality]\label{coro:estimator-variance-symbolic}
Consider the vector of formal variables $v_\alpha = (1, \alpha_1, \dots , \alpha_s)$.  For an integer $j$ with  $0 \leq j \leq k$, define $M_j$ to be the unique symmetric matrix such that
\[
 \Norm{ \frac{1}{L}\sum_{t = s + 1}^{s + L}  \hat{y}_{t + k}u_{t + k - j}^\top - X_j}_F^2   = v_{\alpha}^\top M_j v_{\alpha} ,
\]
where recall $X_j$ is defined in Definition~\ref{def:estimator-means}.  Then we have
\[
\EE[M_j] \preceq \frac{P_1}{L}(I + G_{L} + H_L) ,
\]
where the expectation is over the randomness of the realizations of the $u_i, w_i, z_i$.
\end{corollary}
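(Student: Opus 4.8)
The plan is to obtain the claimed semidefinite inequality as a purely \emph{symbolic} consequence of the fixed-$\alpha$ variance bound of Lemma~\ref{lem:variance-bound-v2}: rewrite the right-hand side of that lemma as a quadratic form in $v_\alpha$, observe that this makes it an inequality of quadratic forms holding along the affine slice where the leading coordinate of $v_\alpha$ is $1$, and then homogenize to upgrade it to a genuine PSD inequality on all of $\R^{sm^2+1}$. The only non-routine point is this last step; everything else is substitution and routine checks.

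First I would record the bookkeeping facts. Since $\hat y_{t+k} = y_{t+k} - \alpha_1 y_{t-1} - \dots - \alpha_s y_{t-s}$ depends affinely on the formal variables $\alpha = (\alpha_1,\dots,\alpha_s)$, the matrix $\tfrac1L\sum_{t=s+1}^{s+L}\hat y_{t+k}u_{t+k-j}^\top - X_j$ is affine in $\alpha$, so its squared Frobenius norm is genuinely a quadratic polynomial in $\alpha$; writing this polynomial as $\norm{tL_0 + L_1(\alpha)}_F^2$, where $L_0$ and the linear map $L_1$ have coefficients that are polynomials in the $u_i,w_i,z_i$, makes evident both that $M_j$ is well defined and symmetric and that $M_j \succeq 0$ for every realization. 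Moreover, the bound in Lemma~\ref{lem:variance-bound-v2} being finite (together with $M_j \succeq 0$) guarantees that all entrywise expectations $\EE[M_j]$ exist and are finite, and that $\EE[v_\alpha^\top M_j v_\alpha] = v_\alpha^\top \EE[M_j]\, v_\alpha$ for each fixed real $v_\alpha$ by linearity.

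Next I would substitute the identities $1 + \sum_{i=1}^s \norm{\alpha_i}_F^2 = v_\alpha^\top v_\alpha$, $\;G_{\alpha,L} = v_\alpha^\top G_L v_\alpha$, and $H_{\alpha,L} = v_\alpha^\top H_L v_\alpha$ (the first from flattening preserving Frobenius norm, the latter two by the definitions of $G_L,H_L$) into Lemma~\ref{lem:variance-bound-v2}. This yields, for every \emph{real} choice of $\alpha_1,\dots,\alpha_s$,
\[
v_\alpha^\top \EE[M_j]\, v_\alpha \;\le\; \frac{P_1}{L}\, v_\alpha^\top\bigl(I + G_L + H_L\bigr) v_\alpha ,
\]
i.e., setting $R := \tfrac{P_1}{L}(I + G_L + H_L) - \EE[M_j]$, we get $v^\top R v \ge 0$ for every $v$ of the form $v = (1,w)$ with $w \in \R^{sm^2}$.

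Finally I would upgrade this slice inequality to $R \succeq 0$ by homogenization. Let $r_{00}$ be the top-left entry of $R$, let $r_0$ be the remainder of its first column, and let $R_{11}$ be its lower-right $sm^2 \times sm^2$ block. Applying the slice inequality to $v = (1,\lambda w)$ gives $r_{00} + 2\lambda\, r_0^\top w + \lambda^2\, w^\top R_{11} w \ge 0$ for all $\lambda \in \R$; nonnegativity of this quadratic in $\lambda$ forces its leading coefficient to be nonnegative, so $w^\top R_{11} w \ge 0$ for all $w$, i.e.\ $R_{11} \succeq 0$. Then for any $(t,w)$ with $t \ne 0$ we have $(t,w)^\top R (t,w) = t^2\,(1, w/t)^\top R (1, w/t) \ge 0$, while for $t = 0$ we have $(0,w)^\top R (0,w) = w^\top R_{11} w \ge 0$; hence $R \succeq 0$, which is exactly $\EE[M_j] \preceq \tfrac{P_1}{L}(I + G_L + H_L)$. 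The main obstacle, such as it is, is recognizing that an inequality holding only on the inhomogeneous slice $\{(v_\alpha)_1 = 1\}$ nevertheless forces a true semidefinite inequality — the scaling argument above is what makes this work — since the rest is direct substitution into Lemma~\ref{lem:variance-bound-v2} and routine well-definedness checks.
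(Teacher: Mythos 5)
Your proposal is correct and follows essentially the same route as the paper: substitute the fixed-$\alpha$ bound of Lemma~\ref{lem:variance-bound-v2} into the quadratic forms $v_\alpha^\top G_L v_\alpha$, $v_\alpha^\top H_L v_\alpha$ and conclude the semidefinite inequality. The one difference is that the paper simply asserts the passage from the inequality on the slice $\{(v_\alpha)_1 = 1\}$ to the full PSD statement, whereas you correctly supply the homogenization/scaling argument that justifies it.
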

\begin{proof}
Note that Lemma~\ref{lem:variance-bound-v2} holds over all choices of $\alpha = (\alpha_1, \dots , \alpha_s)$.  Thus, we have for any choice of $\alpha$,
\begin{equation*}
    \begin{split}
 v_{\alpha}^\top \EE[M_j] v_{\alpha} & = \EE\left[ \Norm{ \frac{1}{L}\sum_{t = s + 1}^{s + L}  \hat{y}_{t + k}u_{t + k - j}^\top - X_j}_F^2 \right]\\
 & \leq \frac{P_1}{L}\left( 1 + \norm{\alpha_1}_F^2 + \dots + \norm{\alpha_s}_F^2 + G_{\alpha,L} + H_{\alpha , L}\right)  \\ & \leq \frac{P_1}{L} v_{\alpha}^\top(I + G_{L} + H_L) v_{\alpha} \,.
\end{split}
\end{equation*}
Thus, we must actually have
\[
\EE[M_j] \preceq \frac{P_1}{L}(I + G_{L} + H_L) \,.
\]
\end{proof}

Now we can complete the analysis of our algorithm for learning the Markov parameters.

\begin{proof}[Proof of Theorem \ref{thm:learning-markov}]
 
With probability at least $1 - 0.1\delta$, the event in Claim~\ref{claim:basic-boundedness} holds and we condition on it.  Now we solve the program in Definition~\ref{def:program-v2}.  By Lemma~\ref{lem:feasibility}, it is feasible. Let $\tilde{\alpha} = (\tilde{\alpha}_1, \dots , \tilde{\alpha}_s)$ be a feasible solution.  

Now construct a $\gamma$-net, denoted by $\calT$, over the matrices $(\alpha_1, \dots, \alpha_s)$ in Frobenius norm with $\gamma = 1/L^{4n}$.  Then, observe for any $(\alpha_1, \dots , \alpha_s)$ with $\norm{\alpha_i} \leq P_0$, there exists $(\alpha_1', \dots , \alpha_s') \in \calT$ such that
\[
\sqrt{\norm{\alpha_1 - \alpha_1'}_F^2 + \dots + \norm{\alpha_s - \alpha_s'}_F^2} \leq \frac{1}{L^{4n}} \,.
\]
It is clear that such a net exists with $|\calT| \leq L^{4nsm^2}$.  Now, we can union bound over all the events such that with probability at least $1- 0.2\delta$, simultaneously, for all $\alpha$, for all $i\in [100snm^2 K\log(L)]$, it follows from 
Lemma~\ref{lem:potential-bound-v2} that $\norm{\hat{y}_{i L +k} }^2\leq 2P1 \log(1/\epsilon)$.

Next, we
show that the solution $\hat{\alpha}$ we obtain must satisfy 
\begin{equation}\label{eq:solution-potential}
G_{\hat{\alpha},L} \leq m(200 P_1 \log (1/\eps))^2 \,.
\end{equation}
To see this, assume for the sake of contradiction that the above doesn't hold.  Now round our solution $\hat{\alpha}$ to the nearest $\alpha' = (\alpha'_1, \dots , \alpha'_s)$ in the net.  By Lemma~\ref{claim:powering-bound}, as long as $P_2$ (and recall $L = P_2 \log^2 (1/\eps)/\eps^2$) is chosen sufficiently large, we have 
\[
G_{\alpha', L} \geq m(100 P_1 \log (1/\eps))^2
\]
However,  Lemma~\ref{lem:potential-bound-v2} implies that there is some integer $1 \leq i \leq 10^2snm^2K \log L$ such that 
\[
\norm{y_{iL+k} - \alpha_1' y_{iL-1} - \dots - \alpha_s' y_{iL-s}} \geq 2P_1 \log(1/\eps) \,.
\]
However, using the assumptions in Lemma~\ref{claim:basic-boundedness}, the formula for $y_t$ in Fact~\ref{fact:formula} and the bounds in Lemma~\ref{claim:powering-bound}, and the properties of the net, the above implies that
\[
\norm{y_{iL+k} - \hat{\alpha}_1 y_{iL-1} - \dots - \hat{\alpha}_s y_{iL-s}} \geq P_1 \log (1/\eps)
\]
which contradicts the fact that $(\hat{\alpha}_1, \dots , \hat{\alpha}_s)$ is a feasible solution.  Thus, we actually must have \eqref{eq:solution-potential}.  Now by Lemma~\ref{lem:alternate-potential}, we have $H_{\alpha, L - s} \leq \kappa^2 s m(200 P_1 \log(1/\eps))^2$.  Now let $L' = L - s$ and set
\[
\hat{X}_j = \frac{1}{L'}\sum_{t = s+1}^{s + L'}  \hat{y}_{t+k} u_{t + k - j}^\top 
\]
for all $0 \leq j \leq k$.  Let $M_j$ be defined as in Corollary~\ref{coro:estimator-variance-symbolic} (with $L$ replaced by $L'$).  Let 
\[
\widetilde{M_j} = \frac{P_1}{L'}(I + G_{L'} + H_{L'}) \,.
\]
Note that $\widetilde{M_j}$ is clearly PSD.  Also $M_j$ is always PSD regardless of the realizations of the $u_i, w_i, z_i$, so by Markov's inequality and Corollary~\ref{coro:estimator-variance-symbolic}, we have that with $1 - 0.1\delta/k$ probability
\[
\textsf{Tr}\left(\widetilde{M_j}^{-1/2} M_j \widetilde{M_j}^{-1/2} \right) \leq \frac{10k}{\delta} \EE\left[ \textsf{Tr}\left(\widetilde{M_j}^{-1/2} M_j \widetilde{M_j}^{-1/2} \right) \right] \leq  \frac{10k}{\delta}(sm^2 + 1) \,.
\]
This means that with $1 - 0.1\delta/k$ probability, 
\[
\widetilde{M_j}^{-1/2} M_j \widetilde{M_j}^{-1/2} \preceq \frac{10k(sm^2 +1)}{\delta}
\]
which then implies 
\[
M_j  \preceq \frac{10k(sm^2 +1)}{\delta}\widetilde{M_j} \,.
\]
Assuming that this happens, we have that 
\begin{equation}
\begin{split}
\norm{\hat{X}_j - X_j}_F^2 = v_{\alpha}^\top M_j v_{\alpha} & \leq \frac{10k(sm^2 +1)}{\delta} \frac{P_1}{L'}(1 + \norm{\alpha_1}_F^2 + \dots + \norm{\alpha_s}_F^2 + G_{\alpha,L'} + H_{\alpha, L'})  \\ 
&\leq \eps^2
\end{split}
\end{equation}
where the last inequality uses that $L = P_2 (\log (1/\eps))^2/\eps^2 $ and $P_2$ is chosen sufficiently large.  Finally, union bounding the above over all choices of $j = 0,1, \dots k $ completes the proof for the guarantees of the estimator.

Finally, note that our algorithm runs in polynomial time in all the parameters because the convex program $\calC_\alpha$ admits an efficient separation oracle.  To see this, note that there are only polynomially many constraints in $\calC_\alpha$ and each one is either a linear constraint or an ellipsoid constraint both of which admit an efficient separation oracle.

\end{proof}

\section{From Markov Parameters to System Parameters}
\label{sec:markov-params-to-sys-params}
Note that Theorem~\ref{thm:learning-markov} guarantees that we can get good estimates for the markov parameters.  To complete the proof of our full learning result, Theorem~\ref{thm:main-learning-lds}, we apply the Ho-Kalman algorithm black box to extract the system matrices $\{A,B,C,D\}$ where $\{A,B,C\}$ are recovered up to a similarity transformation.  Recall that linear dynamical systems are specified only up to similarity transformation (see \cite{oymak2019non} for a discussion on this point).  


\begin{mdframed}
  \begin{algorithm}[Robust Ho-Kalman, Algortihm 1 in ~\cite{oymak2019non} ]
    \label{algo:robust-ho-kalman}\mbox{}
    \begin{description}
    \item[Input:] Parameter $s$, Markov parameter matrix estimate $\hat{G} = [\hat{X}_0, \dots , \hat{X}_{2s}] $  
    
    \item[Operation:]
    \begin{enumerate}
    \item Set $\hat{D} = \hat{X}_0$
    \item Form the Hankel matrix $\hat{H} \in \R^{ms \times p(s+1)}$ from $\hat{G}$ as
    \[
    \hat{H} = \begin{bmatrix} \hat{X}_1 & \hat{X}_2 & \dots & \hat{X}_{s+1} \\ \hat{X}_2 & \hat{X}_3 & \dots & \hat{X}_{s + 2} \\ \vdots & \vdots & \ddots & \vdots \\ \hat{X}_{s} & \hat{X}_{s+1} & \dots & \hat{X}_{2s} 
    \end{bmatrix}
    \]
    \item $\hat{H}^{-} \in \R^{ms \times ps} \leftarrow $ first $ps$ columns of $\hat{H}$
    \item $\hat{L} \in \R^{ms \times ps} \leftarrow$ rank $n$ approximation of $\hat{H}^{-}$ obtained via SVD
    \item $U,\Sigma,V = SVD(\hat{L})$
    \item $\hat{O} \in \R^{ms \times n} \leftarrow U\Sigma^{1/2}$
    \item $\hat{Q} \in \R^{n \times ps} \leftarrow \Sigma^{1/2}V^*$
    \item $\hat{C} \leftarrow$ first $m$ rows of $\hat{O}$
    \item $\hat{B} \leftarrow $ first $p$ column of $\hat{Q}$
    \item $\hat{H}^{+} \in \R^{ms \times ps} \leftarrow $ last $ps$ column of $\hat{H}$
    \item $\hat{A} \leftarrow \hat{O}^{\top} \hat{H}^{+} \hat{Q}^{\top}$
    \end{enumerate}
    \item[Output:] $\hat{A} \in \R^{n \times n}, \hat{B} \in \R^{n \times p}, \hat{C} \in \R^{m \times n}, \hat{D} \in \R^{m \times p}$  
    \end{description}
  \end{algorithm}
\end{mdframed}



The main point of this section is to show that if the input $\hat{G}$ to Algorithm~\ref{algo:robust-ho-kalman} is close to the true Markov parameters
\[G = [D, CB, CAB, ..., CA^{2s-1}B] \in \R^{m \times (2s+1)p}\]
then the actual estimates of $\hat{A}, \hat{B},\hat{C},\hat{D}$ output by the algorithm must be close to the true parameters up to a common rotation.  Once we have this, then combining with Theorem~\ref{thm:learning-markov} will complete the proof of our main theorem, Theorem~\ref{thm:main-learning-lds}.

The following lemma from \cite{oymak2019non} establishes error guarantees for the Ho-Kalman algorithm given operator norm bounds on estimating $G$.   
\begin{lemma}[\cite{oymak2019non}]\label{lem:ho-kalman}
For observability and controllability matrices that are rank $n$, the Ho-Kalman algorithm applied to $\hat{G}$ produces estimates $\hat{A},\hat{B}$, and $\hat{C}$ such that 
there exists similarity transform $T \in \R^{n \times n}$ such that 

\[\max\{\|C - \hat{C} T\|_F, \| B - T^{-1} \hat{B}\|_F\} \leq 5\sqrt{n \| G - \hat{G}\|}
\]
 and 
 \[\|A - T^{-1} \hat{A}T\|_F \leq \frac{\sqrt{n \|G - \hat{G}\|} \| H\| }{\sigma_{min}^{3/2}(H^{-})}\]
 
 and 
 \[\Norm{D - \hat{D}}_F \leq \sqrt{n}\Norm{G - \hat{G}}\]
 where in the above
 \[
 G = [D, CB, CAB, ..., CA^{2s-1}B]  \,.
 \]
\end{lemma}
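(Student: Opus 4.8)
The plan is to propagate a perturbation in $\hat G$ through each step of the Ho--Kalman algorithm, exploiting the product structure of the Hankel matrices. Writing $O_s$ for the order-$s$ observability matrix and $Q_s$, $Q_{s+1}$ for the controllability matrices of orders $s$ and $s+1$, the exact Hankel matrix built from the true Markov parameters factors as $H = O_s Q_{s+1}$; its first $ps$ columns satisfy $H^{-} = O_s Q_s$, which has rank exactly $n$ since the system is observable and controllable; and the shifted block $H^{+}$ (its last $ps$ columns) factors as $H^{+} = O_s A Q_s$. Since each block $CA^iB$ occurs in at most $s$ entries of the Hankel matrix, perturbing $G$ to $\hat G$ perturbs $H$ to $\hat H$ with $\|H - \hat H\|$ at most a $\mathrm{poly}(s,p)$ factor times $\|G - \hat G\|$, so it suffices to bound everything in terms of $\|H - \hat H\|$.

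First I would control the low-rank truncation step. Since $H^{-}$ already has rank $n$ and $\hat L$ is the best rank-$n$ operator-norm approximation of $\hat H^{-}$, Eckart--Young gives $\|\hat L - H^{-}\| \le 2\|\hat H^{-} - H^{-}\| \le 2\|\hat H - H\|$. The core of the argument is a stability statement for balanced factorizations: if $H^{-} = O_s Q_s$ and $\hat L = \hat O\hat Q$, with $\hat O = U\Sigma^{1/2}$ and $\hat Q = \Sigma^{1/2}V^{*}$ read off from the SVD of $\hat L$, are close in operator norm and both have rank $n$, then there is an invertible $T$, close to unitary, such that $\|O_s - \hat O T\|$ and $\|Q_s - T^{-1}\hat Q\|$ are both of order $\sqrt{\|\hat L - H^{-}\|}\,/\sqrt{\sigma_{\min}(H^{-})}$. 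The square-root loss is intrinsic, coming from the split $\Sigma = \Sigma^{1/2}\Sigma^{1/2}$, and the $\sigma_{\min}(H^{-})$ in the denominator is what forces the nonzero singular values of $\hat L$ away from $0$ so that $\Sigma^{\pm 1/2}$ are controlled. Reading $\hat C$ off as the first $m$ rows of $\hat O$ and $\hat B$ as the first $p$ columns of $\hat Q$, and converting operator norm to Frobenius norm at the cost of a $\sqrt n$ factor, yields the claimed bounds on $\|C - \hat C T\|_F$ and $\|B - T^{-1}\hat B\|_F$.

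For $\hat A$, I would start from the identity $A = O_s^{\dagger} H^{+} Q_s^{\dagger}$, which holds because $H^{+} = O_s A Q_s$ with $O_s$ of full column rank and $Q_s$ of full row rank, and compare with the estimate $\hat A = \hat O^{\dagger}\hat H^{+}\hat Q^{\dagger}$ produced by the algorithm. Using $O_s \approx \hat O T$ and $Q_s \approx T^{-1}\hat Q$ gives $(\hat O T)^{\dagger} = T^{-1}\hat O^{\dagger}$ and $(T^{-1}\hat Q)^{\dagger} = \hat Q^{\dagger}T$, so $A$ is close to $T^{-1}\hat O^{\dagger}\hat H^{+}\hat Q^{\dagger}T = T^{-1}\hat A T$; bounding the error requires controlling the pseudoinverses, and since $\sigma_{\min}(\hat O),\sigma_{\min}(\hat Q)$ are of order $\sqrt{\sigma_{\min}(H^{-})}$ this injects another factor of $1/\sigma_{\min}(H^{-})$. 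Collecting terms, and noting that $\|H^{+}\| \le \|H\|$ appears in the error bound, gives $\|A - T^{-1}\hat A T\|_F = O\big(\sqrt n\,\|H - \hat H\|\cdot\|H\|/\sigma_{\min}^{3/2}(H^{-})\big)$, which is the stated bound. Finally $\hat D = \hat X_0$ and $D = X_0$, so $\|D - \hat D\|_F \le \sqrt n\,\|G - \hat G\|$ is immediate.

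The main obstacle is the balanced-factorization perturbation lemma: matching the SVD factors of $\hat L$ with those of $H^{-}$ calls for a Wedin/Davis--Kahan-type argument on the singular subspaces together with careful bookkeeping of how the residual $\Sigma^{1/2}$ factor interacts, and the delicate point is obtaining exactly the $\sigma_{\min}(H^{-})^{-1/2}$ scaling rather than a worse power while making a single $T$ serve simultaneously for $O_s$ and $Q_s$. Since this is precisely the content of Theorem~5.3 of \cite{oymak2019non}, which we are invoking as a black box, I would cite it directly rather than reprove it; the remaining steps above are routine linear-algebra bookkeeping.
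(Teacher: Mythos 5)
Your proposal is correct and matches the paper's treatment: the paper states this lemma as an imported result from \cite{oymak2019non} with no proof of its own, and you likewise defer the key balanced-factorization perturbation step to Theorem~5.3 of that reference. Your surrounding sketch (the factorizations $H^{-}=O_sQ_s$, $H^{+}=O_sAQ_s$, the Eckart--Young step, and the pseudoinverse comparison for $\hat{A}$) is a faithful outline of the cited argument, so there is nothing to correct.
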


A straightforward application of this lemma allows us to complete the proof of Theorem~\ref{thm:main-learning-lds}.   

\begin{proof}[Proof of Theorem~\ref{thm:main-learning-lds}]
By Theorem~\ref{thm:learning-markov}, in Algorithm~\ref{algo:learning-lds}, the input $\hat{G}$ to the Ho-Kalman algorithm satisfies 
\[
\norm{G - \hat{G}}_F \leq \sqrt{2s + 1}\eps
\]
with probability at least $1 - \delta$.  Now we apply Lemma~\ref{lem:ho-kalman}.  The two things we need to do are upper bound $\norm{ H}$ and lower bound $\sigma_{min} (H^{-})$.  We have an upper bound on $\| H\| \leq \sigma_{max}(\mathcal{O}_s)\sigma_{max}(\mathcal{Q}_s) \leq \kappa^2 s \|B\| \|C\|$ where we use \pref{claim:eigenvalue-bound}.  We also have $\sigma_{min}(H^{-}) \geq \sigma_{min}(\mathcal{O}_s) \sigma_{min}(\mathcal{Q}_s) \geq \norm{B}\norm{C} \geq 1$.  Therefore we conclude that there is a similarity transform $T$ such that 
\[
\begin{split}
\max\{\|A - T^{-1} \hat{A}T\|_F, \|B - \hat{B} T\|_F, \| C - T^{-1} \hat{C}\|_F\} \leq \poly(n,\kappa, s, \norm{B}, \norm{C}) \sqrt{\epsilon} \,. 
\end{split}
\]
Redefining $\eps$ appropriately immediately gives the desired result.
\end{proof}

\section{Sample Complexity Lower Bound for Ill-Conditional LDS}
\label{sec:lower-bound}

In this section, we prove a lower bound, that when the observability or controllability matrix of an LDS is close to singular, then it is information-theoretically impossible to learn.  We consider the case where the distributions $\calD_u = \calD_0 =  N(0, I)$ and $\calD_w = N(0, \Sigma_w)$ where $\Sigma_w$ will be set later.  For simplicity, we also set $x_0 = 0$ and also $D = 0$.   

\begin{definition}
We say that an LDS $\calL(A,B,C,D)$ is $(\delta,v)$-unobservable if $v$ is a unit vector such that for all integers $s \geq 0$,
\[
\norm{CA^s v} \leq \delta \,.
\]
Note that the above condition depends only on $A,C$ so we will sometimes talk about a pair of matrices $A,C$ being $(\delta,v)$-unobservable.
\end{definition}

\begin{definition}
We say that an LDS $\calL(A,B,C,D)$ is $(\delta,v)$-uncontrollable if $v$ is a unit vector such that for all integers $s \geq 0$,
\[
\norm{(A^sB)^\top v} \leq \delta \,.
\]
Note that the above condition depends only on $A,B$ so we will sometimes talk about a pair of matrices $A,B$ being $(\delta,v)$-unobservable.
\end{definition}

\begin{definition}
For an LDS $\calL = \calL(A,B,C,D)$ and integer $t \geq 0$, we define the distribution $\calD_{\calL, t}$ to be the joint distribution of $(u_0, \dots , u_t, y_0, \dots , y_t)$.  We define $\Sigma_{\calL , t} \in \C^{(m + p)(t + 1)}$ to be the covariance of this distribution (where we flatten and then concatenate all of the $u_i, y_i$).
\end{definition}

Clearly the joint distribution of $(u_0, \dots , u_T, y_0, \dots , y_T)$ is Gaussian and has mean $0$.  Thus, we have the following fact.
\begin{fact}\label{fact:gaussian-distr}
We have
\[
\calD_{\calL, T} = N(0, \Sigma_{\calL , T}) \,.
\]
\end{fact}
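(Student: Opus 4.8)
The plan is to observe that $(u_0,\dots,u_T,y_0,\dots,y_T)$ is a fixed linear image of a vector of independent Gaussians, hence itself jointly Gaussian; its mean is then forced to be $0$ and its covariance is, by definition, the matrix $\Sigma_{\calL,T}$, which is exactly the claim. So the work is entirely in making the "linear image of a Gaussian" statement precise.

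First I would invoke the explicit formula of Fact~\ref{fact:formula}, specialized to the setting of this section ($x_0 = 0$, $D = 0$): for each $t$,
\[
y_t = \sum_{i=1}^{t} \Paren{ CA^{i-1} B\, u_{t-i} + CA^{i-1} w_{t-i} } + z_t \,.
\]
This exhibits $y_t$ as a deterministic linear combination — with coefficient matrices depending only on $A,B,C$ — of the independent samples $u_0,\dots,u_T$, $w_0,\dots,w_{T-1}$, $z_0,\dots,z_T$ (one can equivalently get this by an immediate induction on $t$ from $x_{t+1} = Ax_t + Bu_t + w_t$ and $y_t = Cx_t + z_t$). Letting $\xi$ denote the concatenation (after flattening) of all of these independent Gaussian sources, and stacking the identity map on $(u_0,\dots,u_T)$ on top of the above, we get $(u_0,\dots,u_T,y_0,\dots,y_T) = M\xi$ for a fixed matrix $M$ built from the system parameters.

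Next I would use that $\xi \sim N(0,\Sigma_\xi)$, where $\Sigma_\xi$ is the block-diagonal covariance assembled from the identity (for the $u_i$), $\Sigma_w$ (for the $w_i$), and the covariance of $\calD_z$ (for the $z_i$) — this is where the section's standing hypothesis that the control, process-noise, and observation-noise distributions are all Gaussian is used. Since a linear image of a Gaussian is Gaussian, $M\xi \sim N(M\cdot 0,\, M\Sigma_\xi M^\top) = N(0,\, M\Sigma_\xi M^\top)$; in particular $\calD_{\calL,T}$ is a mean-zero Gaussian, and its covariance is by definition denoted $\Sigma_{\calL,T}$, so $\calD_{\calL,T} = N(0,\Sigma_{\calL,T})$. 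There is essentially no obstacle here; the only mild care needed is the bookkeeping of the flattening/concatenation so that the elementary fact "a linear image of a jointly Gaussian vector is jointly Gaussian" applies verbatim.
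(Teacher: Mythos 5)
Your proof is correct and matches the paper's reasoning: the paper simply asserts that the joint vector is clearly mean-zero Gaussian (leaving the "linear image of independent Gaussians" observation implicit), and your proposal just fills in that standard bookkeeping. No issues.
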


We also need the following formulas.

\begin{fact}\label{fact:obs-cov-formula}
For $t_1 \geq t_2 \geq 0$, we have
\[
\EE[y_{t_1} y_{t_2}^\top ] = \sum_{i = 1}^{t_2} \left(CA^{t_1 - t_2 + i - 1}B (CA^{ i - 1}B )^\top + (CA^{t_1 - t_2 + i - 1})\Sigma_w (CA^{t_1 - t_2 + i - 1})^\top\right) +   1_{t_1 = t_2} (DD^\top + I) \,. 
\]
\end{fact}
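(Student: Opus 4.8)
The plan is to expand both $y_{t_1}$ and $y_{t_2}$ via the closed form of Fact~\ref{fact:formula}, specialized to $x_0 = 0$ (and, as assumed throughout this section, $D = 0$ and $\Sigma_z = I$; I retain $D$ below only for bookkeeping), and then evaluate $\EE[y_{t_1}y_{t_2}^\top]$ term by term using the mutual independence and mean-zero property of all the $u_i, w_i, z_i$. Concretely, Fact~\ref{fact:formula} with $x_0 = 0$ gives
\[
y_{t} = \sum_{i=1}^{t}\left(CA^{i-1}Bu_{t-i} + CA^{i-1}w_{t-i}\right) + Du_{t} + z_{t}
\]
for $t \in \{t_1, t_2\}$ (using a fresh summation index $j$ for $y_{t_2}$), and expanding the outer product $y_{t_1}y_{t_2}^\top$ by bilinearity of expectation produces a sum over pairs of summands, one from the $y_{t_1}$ expansion and one from the $y_{t_2}$ expansion.

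Next I would discard every pair killed by independence. Since $\EE[u_au_b^\top] = 1_{a=b}I$, $\EE[w_aw_b^\top] = 1_{a=b}\Sigma_w$, $\EE[z_az_b^\top] = 1_{a=b}I$, and every cross-type expectation (such as $\EE[u_aw_b^\top]$ or $\EE[u_az_b^\top]$) vanishes, only index-matched same-type pairs survive. For the two convolution sums, the pair $CA^{i-1}Bu_{t_1-i}$ with $CA^{j-1}Bu_{t_2-j}$ contributes exactly when $t_1 - i = t_2 - j$; given the ranges $1 \le i \le t_1$, $1 \le j \le t_2$ and the hypothesis $t_1 \ge t_2$, the surviving pairs are precisely those with $j$ ranging over $\{1,\dots,t_2\}$ and $i = j + (t_1 - t_2)$, and after reindexing these contribute the first ($B$-) sum in the statement. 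The matched $w$-pairs obey the same matching condition $t_1 - i = t_2 - j$ and contribute the $\Sigma_w$-sum. Finally, $\EE[Du_{t_1}(Du_{t_2})^\top]$ and $\EE[z_{t_1}z_{t_2}^\top]$ are nonzero only when $t_1 = t_2$, contributing $DD^\top$ and $\Sigma_z = I$ respectively, which assembles the diagonal term $1_{t_1=t_2}(DD^\top + I)$; any cross-pair between $Du_{t_2}$ (resp. $Du_{t_1}$) and a convolution tail of $y_{t_1}$ (resp. $y_{t_2}$) would require $t_1 = t_2 - j$ (resp. $t_2 = t_1 - i$) for some $i,j \ge 1$, which is impossible under $t_1 \ge t_2$ and in any case moot once $D = 0$. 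Collecting the surviving terms yields the claimed identity.

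The computation carries no substantive obstacle; the only care needed is the index bookkeeping — pinning down exactly which summation indices coincide under the constraint $t_1 \ge t_2$, and confirming that the direct-input term $Du_t$ of one observation cannot pair nontrivially with the convolution sum of the other. Both points are immediate once one uses that $D = 0$ throughout this section, so the diagonal contribution collapses to $1_{t_1 = t_2}I$ (equivalently $1_{t_1=t_2}(DD^\top + I)$ since $DD^\top = 0$) and the formula follows. The one place to be slightly watchful is that the matched $w$-pairs give a product of the form $CA^{t_1-t_2+i-1}\Sigma_w(CA^{i-1})^\top$ per index $i \in \{1,\dots,t_2\}$, matching the second sum of the statement under the natural reindexing.
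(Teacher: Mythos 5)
Your proof is correct and takes exactly the route of the paper's own (two-line) proof: expand both observations via Fact~\ref{fact:formula} with $x_0=0$ and kill all unmatched pairs by independence. Two small caveats. First, your parenthetical that a pairing of $Du_{t_2}$ with the convolution tail of $y_{t_1}$ "would require $t_2 = t_1 - i$... which is impossible under $t_1 \ge t_2$" is not right: when $t_1 > t_2$ this occurs at $i = t_1 - t_2 \ge 1$ and would contribute $CA^{t_1-t_2-1}BD^\top$; your argument survives only because $D=0$ in this section, as you note. Second, the $w$-term you derive, $CA^{t_1-t_2+i-1}\Sigma_w (CA^{i-1})^\top$, is the correct one, but it does \emph{not} match the displayed statement, which has $(CA^{t_1-t_2+i-1})\Sigma_w(CA^{t_1-t_2+i-1})^\top$ — the same exponent on both factors; no reindexing reconciles these for $t_1 > t_2$, so the printed fact contains a typo and your version is the right one. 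Neither point affects the downstream use in Claim~\ref{claim:covariance-formula}, whose block form $P_T(A,\Sigma_w^{1/2},C)P_T(A,\Sigma_w^{1/2},C)^\top$ agrees with your (corrected) expression.
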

\begin{proof}
Recall the formula from Fact~\ref{fact:formula}:
\begin{equation*}
    y_t = \sum_{i = 1}^{t} \Paren{  C A^{i-1} B   u_{t-i}  + C A^{i-1}w_{t-i} } +  Du_t + z_{t}  \,.
\end{equation*}
Now using the setting of $\calD_u, \calD_w, \calD_z$ and the independence of $u_i,w_i,z_i$, we get the desired relation.
\end{proof}

\begin{definition}
For matrices $A,B,C$ and parameter $T$, let
\[
P_T(A,B,C) = \begin{bmatrix}
0 & 0 & \dots & 0 & 0\\ CB & 0 & \dots &  0 & 0 \\ \vdots & \vdots & \ddots & \vdots & \vdots \\ CA^{T-1}B & CA^{T-2}B & \dots  & CB &  0 \,.
\end{bmatrix} \,.
\]
where there are $T+1$ rows and columns in the block matrix.
\end{definition}

\begin{claim}\label{claim:covariance-formula}
For an LDS $\calL(A,B,C,D )$ with $D = 0$ and noise distribution $\calD_u = \calD_z = N(0,I)$ and $\calD_w = N(0, \Sigma_w)$, we have
\begin{equation*}
\begin{split}
\Sigma_{\calL, T} & = \begin{bmatrix} I_{p(t+1)} \\ P_T(A,B,C) \end{bmatrix} \begin{bmatrix} I_{p(t+1)} & P_T(A,B,C)^\top \end{bmatrix} \\
& \hspace{0.2in}+ \begin{bmatrix} 0 \\ P_T(A,\Sigma_w^{1/2},C) \end{bmatrix} \begin{bmatrix} 0 &  P_T(A,\Sigma_w^{1/2},C)^\top \end{bmatrix}  + \begin{bmatrix}
0 & 0 \\ 0 & I_{m(t+1)}
\end{bmatrix} \,.
\end{split}
\end{equation*}
\end{claim}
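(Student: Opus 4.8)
The plan is to compute $\Sigma_{\calL,T}$ directly, by writing the stacked vector $(u_0,\dots,u_T,y_0,\dots,y_T)$ as a fixed linear image of a standard Gaussian vector and then using that the covariance of $M\xi$ for $\xi \sim N(0,I)$ is $MM^\top$.

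First I would recall the formula from Fact~\ref{fact:formula}; since $D = 0$ and $x_0 = 0$ it reads $y_t = \sum_{i=1}^t\bigl(CA^{i-1}Bu_{t-i} + CA^{i-1}w_{t-i}\bigr) + z_t$, which after the substitution $j = t-i$ becomes $y_t = \sum_{j=0}^{t-1} CA^{t-1-j}(Bu_j + w_j) + z_t$. Writing $w_j = \Sigma_w^{1/2}\widetilde{w}_j$ with $\widetilde{w}_j \sim N(0,I)$ i.i.d., and letting $U = (u_0,\dots,u_T)$, $\widetilde{W} = (\widetilde{w}_0,\dots,\widetilde{w}_T)$, $Z = (z_0,\dots,z_T)$, $Y = (y_0,\dots,y_T)$ denote the corresponding flattened-and-concatenated vectors, the above identities stack to
\[
\begin{bmatrix} U \\ Y \end{bmatrix}
= \underbrace{\begin{bmatrix} I_{p(T+1)} & 0 & 0 \\ P_T(A,B,C) & P_T(A,\Sigma_w^{1/2},C) & I_{m(T+1)} \end{bmatrix}}_{=:\, M}
\begin{bmatrix} U \\ \widetilde{W} \\ Z \end{bmatrix},
\]
where the identification of the block $(a,b)$ of $P_T(A,B,C)$ with $CA^{a-1-b}B$ for $a > b$ and $0$ for $a \le b$ is exactly the coefficient pattern of $u_0,\dots,u_{t-1}$ in $y_t$, and likewise for $\widetilde{W}$; the block $I_{m(T+1)}$ carries the $z_t$ terms. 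Note $y_t$ has no $u_t$ contribution precisely because $D = 0$, which is why $P_T$ has a zero diagonal and a zero first block-row.

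Since the $u_i,\widetilde{w}_i,z_i$ are all independent with identity covariance, $(U,\widetilde{W},Z)$ is a standard Gaussian vector, so its covariance is the identity and hence $\Sigma_{\calL,T} = MM^\top$. Writing $M = [\,M_1 \mid M_2 \mid M_3\,]$ with $M_1 = \begin{bmatrix} I_{p(T+1)} \\ P_T(A,B,C)\end{bmatrix}$, $M_2 = \begin{bmatrix} 0 \\ P_T(A,\Sigma_w^{1/2},C)\end{bmatrix}$, $M_3 = \begin{bmatrix} 0 \\ I_{m(T+1)}\end{bmatrix}$, we get $MM^\top = M_1M_1^\top + M_2M_2^\top + M_3M_3^\top$, which is term-by-term the claimed expression (and the claim should read $I_{p(T+1)}$, $I_{m(T+1)}$ with $T$ in place of the typo'd $t$).

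The only real work is the bookkeeping in the first step: checking that the strictly-lower-triangular, shift-by-one block structure of $P_T(A,B,C)$ matches the coefficient pattern $y_t = \sum_{j < t} CA^{t-1-j}Bu_j + \cdots$, that the noise term $z_t$ contributes exactly $I_{m(T+1)}$ in the lower-right block, and that $w_j = \Sigma_w^{1/2}\widetilde w_j$ turns the $w$-coefficients into $P_T(A,\Sigma_w^{1/2},C)$. There is no analytic difficulty — everything is linearity of expectation plus independence — and one can sanity-check the result against the per-block formula of Fact~\ref{fact:obs-cov-formula}.
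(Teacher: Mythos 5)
Your proof is correct, and it takes a slightly different route from the paper's. The paper verifies the identity block-by-block: it computes $\EE[y_{t_1}u_{t_2}^\top]$ directly and then matches the $y$--$y$ blocks of the claimed right-hand side against the per-entry covariance formula of Fact~\ref{fact:obs-cov-formula}. You instead exhibit the full stacked vector $(U,Y)$ as a single linear image $M\xi$ of a standard Gaussian $\xi=(U,\widetilde W,Z)$ and read off $\Sigma_{\calL,T}=MM^\top=M_1M_1^\top+M_2M_2^\top+M_3M_3^\top$ by splitting $M$ into column blocks. Both arguments rest on the same expansion of $y_t$ from Fact~\ref{fact:formula}, but yours sidesteps the fiddliest step of the paper's route --- checking that the convolution sum $\sum_i CA^{t_1-t_2+i-1}B(CA^{i-1}B)^\top+\cdots$ in Fact~\ref{fact:obs-cov-formula} coincides with the $(t_1,t_2)$ block of $PP^\top+P'(P')^\top$ --- and it makes the positive-semidefinite factored structure manifest, which is exactly what is exploited downstream in Lemma~\ref{lem:observability-lowerbound} when lower-bounding $\omega^\top\Sigma_{\calL,T}\omega$. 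The one ingredient your write-up uses implicitly is that $(U,\widetilde W,Z)$ is jointly standard Gaussian, which holds by the independence assumptions; your bookkeeping of the shift-by-one block-lower-triangular structure of $P_T$ against the coefficient of $u_j$ in $y_t$ is accurate, and your note about the $t$ versus $T$ typo in the subscripts of the identity blocks is well taken.
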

\begin{proof}
Recall that $\EE[y_{t_1} u_{t_2}^\top ] $ is $CA^{t_1 - t_2 - 1}$ if $t_1 > t_2$ and is $0$ otherwise.  Combining this with the formula in Fact~\ref{fact:obs-cov-formula} gives the desired relation.
\end{proof}

Now we prove our lower bound assuming that the observability matrix is ill-conditioned.  A similar construction works when the controllability matrix is ill-conditioned.  In particular, Lemma~\ref{lem:observability-lowerbound} says that if $\delta$ is exponentially small, then we need $T$ to be exponentially large to distinguish $\calL$ and $\calL'$ with constant advantage.  

\begin{lemma}\label{lem:observability-lowerbound}
Let $\calL = \calL(A,B,C,D)$ be an LDS that is $(\delta,v)$-unobservable and assume that $D = 0$ and the noise distributions are $\calD_{u} = \calD_z = N(0,I)$ and $\calD_w = N(0,BB^\top)$ \footnote{Any choice of $\Sigma_w$ with $\Sigma_w \succeq BB^\top$ will suffice}.  Let $u \in \R^{p}$ be an arbitrary vector and let  $\calL' = \calL(A  ,  B + vu^\top ,C  ,D)$ be another LDS with the same noise distributions.  Then for $T^2 (m + p) \norm{u} \leq 1/(100 \delta)$,
\[
d_{\textsf{TV}}( \calD_{\calL, T}, \calD_{\calL',T} ) \leq  O(T^2(m + p)\norm{u}\delta) \,.
\]
\end{lemma}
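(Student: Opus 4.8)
The plan is to exploit that both $\calD_{\calL,T}$ and $\calD_{\calL',T}$ are mean-zero Gaussians (Fact~\ref{fact:gaussian-distr}) that share the \emph{same} conditional law of the observations given the control inputs, \emph{except} for a small shift in the conditional mean coming precisely from $(\delta,v)$-unobservability. Write $\mathbf{u}=(u_0,\dots,u_T)$ for the stack of control inputs (not to be confused with the perturbation vector $u\in\R^p$) and $\mathbf{y}=(y_0,\dots,y_T)$. Under both $\calL$ and $\calL'$ the marginal of $\mathbf u$ is $N(0,I_{p(T+1)})$, and by the recursion in Fact~\ref{fact:formula} (with $x_0=0$, $D=0$) the conditional law of $\mathbf y$ given $\mathbf u$ is Gaussian with mean $P_T(A,B,C)\mathbf u$ for $\calL$ (resp.\ $P_T(A,B+vu^\top,C)\mathbf u$ for $\calL'$) and with a covariance $\Sigma_{\mathbf y\mid\mathbf u}$ that (i) does not depend on $\mathbf u$, (ii) is the same for $\calL$ and $\calL'$ since their process- and observation-noise distributions are identical, and (iii) satisfies $\Sigma_{\mathbf y\mid\mathbf u}\succeq I_{m(T+1)}$ because each $z_t\sim N(0,I)$ contributes an independent identity block. (All of this can alternatively be read off Claim~\ref{claim:covariance-formula}.)

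Granting this, I would apply the decomposition of total variation along the common marginal: since the $\mathbf u$-marginals coincide, the total variation distance of the joints equals the expectation over $\mathbf u$ of the total variation distance of the conditionals,
\[
d_{\textsf{TV}}(\calD_{\calL,T},\calD_{\calL',T}) \;=\; \EE_{\mathbf u}\Big[\, d_{\textsf{TV}}\big(N(P_T(A,B,C)\mathbf u,\,\Sigma_{\mathbf y\mid\mathbf u}),\, N(P_T(A,B+vu^\top,C)\mathbf u,\,\Sigma_{\mathbf y\mid\mathbf u})\big)\Big].
\]
For two Gaussians sharing a covariance one has $d_{\textsf{TV}}(N(\mu_1,\Sigma),N(\mu_2,\Sigma))\le \tfrac12\|\Sigma^{-1/2}(\mu_1-\mu_2)\|_2$, and since $\Sigma_{\mathbf y\mid\mathbf u}\succeq I$ the factor $\Sigma_{\mathbf y\mid\mathbf u}^{-1/2}$ can only help. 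Using linearity of $P_T$ in its middle argument, $P_T(A,B,C)\mathbf u - P_T(A,B+vu^\top,C)\mathbf u = -\,P_T(A,vu^\top,C)\,\mathbf u$, hence
\[
d_{\textsf{TV}}(\calD_{\calL,T},\calD_{\calL',T}) \;\le\; \tfrac12\,\EE_{\mathbf u}\big[\|P_T(A,vu^\top,C)\,\mathbf u\|_2\big] \;\le\; \tfrac12\,\|P_T(A,vu^\top,C)\|_F,
\]
the last step being Jensen/Cauchy--Schwarz, $\EE\|M\mathbf u\|_2\le(\EE\|M\mathbf u\|_2^2)^{1/2}=\|M\|_F$ for $\mathbf u\sim N(0,I)$.

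It then remains to bound $\|P_T(A,vu^\top,C)\|_F$. Its $(i,j)$ block ($i>j$) is the rank-one matrix $CA^{i-j-1}vu^\top$, of Frobenius norm $\|CA^{i-j-1}v\|_2\,\|u\|_2\le \delta\|u\|$ by $(\delta,v)$-unobservability. Summing squared norms over the at most $\binom{T+1}{2}\le T^2$ nonzero blocks gives $\|P_T(A,vu^\top,C)\|_F\le T\delta\|u\|$, and therefore
\[
d_{\textsf{TV}}(\calD_{\calL,T},\calD_{\calL',T})\;\le\; T\delta\|u\| \;=\; O\big(T^2(m+p)\|u\|\delta\big),
\]
which is in fact stronger than the claimed bound; the hypothesis $T^2(m+p)\|u\|\le 1/(100\delta)$ is used only to ensure the estimate is nontrivial.

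The one step that genuinely needs care is the disintegration over $\mathbf u$: one must verify properties (i)--(iii) of $\Sigma_{\mathbf y\mid\mathbf u}$, and in particular that the perturbation $B\mapsto B+vu^\top$ changes nothing but the input-to-output map $P_T(A,\cdot,C)$; everything else is routine. An alternative, purely covariance-level route is to combine Claim~\ref{claim:covariance-formula} with a generic bound $d_{\textsf{TV}}(N(0,\Sigma_1),N(0,\Sigma_2))\le O(\|\Sigma_1^{-1/2}(\Sigma_2-\Sigma_1)\Sigma_1^{-1/2}\|_F)$, but there the obstacle is that $\Sigma_{\calL,T}$ is typically ill-conditioned (its smallest eigenvalue is of order $1/(1+\|P_T(A,B,C)\|^2)$), so one must pair the $\Sigma_1^{-1/2}$ factors carefully against the structured difference $\Sigma_2-\Sigma_1 = M'M'^\top - MM^\top$ with $M=\big[\begin{smallmatrix}I\\ P_T(A,B,C)\end{smallmatrix}\big]$, $M'=\big[\begin{smallmatrix}I\\ P_T(A,B+vu^\top,C)\end{smallmatrix}\big]$ — a computation that the conditional argument above sidesteps, and which also explains the dimension factor $(m+p)$ appearing in the statement.
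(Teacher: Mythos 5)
Your proof is correct, but it takes a genuinely different route from the paper's. The paper works entirely at the level of the joint covariance matrices: using Claim~\ref{claim:covariance-formula} it bounds $\lvert \omega^\top(\Sigma_{\calL,T}-\Sigma_{\calL',T})\omega\rvert$ against a lower bound on $\omega^\top\Sigma_{\calL,T}\omega$ for every unit $\omega$, deduces the multiplicative sandwich $(1-6(T+1)\norm{u}\delta)\Sigma_{\calL,T}\preceq\Sigma_{\calL',T}\preceq(1+6(T+1)\norm{u}\delta)\Sigma_{\calL,T}$, and then invokes a generic TV bound for zero-mean Gaussians with multiplicatively close covariances; this is exactly where the dimension factor $(m+p)(T+1)$ enters and why the hypothesis $T^2(m+p)\norm{u}\delta\le 1/100$ is genuinely used there. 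You instead disintegrate over the common input marginal, observe that the two conditional laws of $\mathbf y$ given $\mathbf u$ are Gaussians with identical covariance $\succeq I$ (thanks to the $z_t\sim N(0,I)$ term) and means differing by $P_T(A,vu^\top,C)\mathbf u$, and reduce to the elementary mean-shift bound plus $\EE\norm{M\mathbf u}\le\norm{M}_F\le T\delta\norm{u}$. This sidesteps the ill-conditioning of $\Sigma_{\calL,T}$ that the paper has to wrestle with, yields the strictly stronger bound $O(T\delta\norm{u})$ with no dimension factor, and renders the smallness hypothesis unnecessary rather than load-bearing. The one step you flag as needing care — that the perturbation $B\mapsto B+vu^\top$ changes only the input-to-output map and not the conditional covariance — is indeed the crux, and it holds here precisely because $\calL'$ is stipulated to keep the same noise distributions (in particular $\Sigma_w=BB^\top$ is not re-derived from $B+vu^\top$); your argument is sound.
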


\begin{proof}
Let $\omega \in \C^{(m + p)(t+ 1)}$ be a unit vector and define $\omega = (\omega_u, \omega_y)$ where $\omega_u$ is the first $(t+1)p$ coordinates of $\omega$ and $\omega_y$ is the last $(t+1)m$ coordinates.  We will bound the difference
\[
\omega^\top (\Sigma_{\calL, T} - \Sigma_{\calL', T} )\omega
\]
and since $\omega$ was arbitrary, we will use this to deduce closeness between the covariance matrices which will then imply closeness in statistical distance of the corresponding distributions.  Define the matrices $P = P_T(A,B,C)$ and $P' = P_T(A, B + vu^\top,C)$.  By the assumption on the system,
\[
\norm{P - P'}_F \leq (T+1)\delta \norm{u} \,.
\]
Now, we have
\[
\begin{split}
&\left \lvert \omega^\top (\Sigma_{\calL, T} - \Sigma_{\calL', T} )\omega \right\rvert = \left\lvert 2 \omega_y^\top (P - P') \omega_u + \omega_y^\top (PP^\top - P'(P')^\top) \omega_y \right\rvert \\ &\leq (T+1)\delta \norm{u} \norm{\omega_y} \norm{\omega_u} + \left\lvert \omega_y^\top\left((P - P')P^\top + P(P - P')^\top - (P - P')(P - P')^\top \right)\omega_y \right\rvert  \\ &\leq (T+1)\delta \norm{u} \norm{\omega_y} \norm{\omega_u} + 2(T + 1)\delta \norm{u} \norm{\omega_y} \norm{P^\top\omega_y} + ((T+1)\delta \norm{u} \norm{\omega_y})^2 \,.
\end{split}
\]
Now we will lower bound $\omega^\top \Sigma_{\calL, T} \omega$.  Using the formula in Claim~\ref{claim:covariance-formula}, we have
\[
\omega^\top \Sigma_{\calL, T} \omega \geq \min\left( \norm{P^\top \omega_y}^2, \norm{\omega_y}^2 , (1 - \norm{\omega_y} - \norm{P^\top \omega_y})^2 \right)
\]
and thus we conclude
\[
\left \lvert \omega^\top (\Sigma_{\calL, T} - \Sigma_{\calL', T} )\omega \right\rvert \leq 6(T+1)\delta \norm{u}\omega^\top \Sigma_{\calL, T} \omega 
\]
which is equivalent to saying
\[
(1 - 6(T+1) \norm{u} \delta) \Sigma_{\calL, T} \preceq \Sigma_{\calL', T} \preceq (1 + 6(T+1)\norm{u} \delta) \Sigma_{\calL, T} \,.
\]
Now by Fact~\ref{fact:gaussian-distr} and standard bounds on TV distance between Gaussians, we conclude that 
\[
d_{\textsf{TV}}( \calD_{\calL, T}, \calD_{\calL',T} )  = d_{\textsf{TV}}( N(0, \Sigma_{\calL, T}),  N(0, \Sigma_{\calL', T}) )  \leq O(T^2(m+p)\norm{u} \delta) 
\]
and we are done.
\end{proof}

We have an analogous lower bound when the controllability matrix is ill-conditioned.
\begin{lemma}\label{lem:controllability-lowerbound}
Let $\calL = \calL(A,B,C,D)$ be an LDS that is $(\delta,v)$-uncontrollable and assume that $D = 0$ and the noise distributions are $\calD_{u} = \calD_z = N(0,I)$ and $\calD_w = N(0,BB^\top)$.  Let $u \in \R^{m}$ be an arbitrary vector and let  $\calL' = \calL(A  ,  B ,C + uv^\top  ,D)$ be another LDS with the same noise distributions.  Then for $T^2 (m + p) \norm{u} \leq 1/(100 \delta)$,
\[
d_{\textsf{TV}}( \calD_{\calL, T}, \calD_{\calL',T} ) \leq  O(T^2(m + p) \norm{u} \delta) \,.
\]
\end{lemma}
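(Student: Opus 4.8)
The plan is to follow the proof of Lemma~\ref{lem:observability-lowerbound} essentially line by line, with the roles of $B$ and $C$ swapped and transposes inserted where appropriate. Rather than perturbing $B$ along the unobservable direction, here I perturb $C$ by the rank-one matrix $uv^\top$, where $v \in \R^n$ is the uncontrollable direction. The algebraic identity that drives the argument is that for every $j \geq 0$,
\[
(C + uv^\top)A^j B = CA^jB + u\bigl(v^\top A^j B\bigr),
\]
and since $\norm{(A^jB)^\top v} \leq \delta$ by $(\delta,v)$-uncontrollability, the perturbation to each block $CA^jB$ has Frobenius norm at most $\norm{u}\delta$. Hence, writing $P = P_T(A,B,C)$ and $P' = P_T(A,B,C+uv^\top)$, we get $\norm{P - P'}_F \leq (T+1)\norm{u}\delta$, exactly as in the observability case.

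The one step that is genuinely different — and the place I would be most careful — is that because we now perturb $C$ instead of $B$, the process-noise block $P_T(A, \Sigma_w^{1/2}, C)$ appearing in the covariance formula of Claim~\ref{claim:covariance-formula} also moves. Here I would use the specific choice $\Sigma_w = BB^\top$: since $\Sigma_w^{1/2}(\Sigma_w^{1/2})^\top = BB^\top$, we have
\[
\norm{v^\top A^j \Sigma_w^{1/2}}^2 = v^\top A^j B B^\top (A^j)^\top v = \norm{(A^jB)^\top v}^2 \leq \delta^2,
\]
so with $R = P_T(A,\Sigma_w^{1/2},C)$ and $R' = P_T(A,\Sigma_w^{1/2},C+uv^\top)$ we again obtain $\norm{R - R'}_F \leq (T+1)\norm{u}\delta$. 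This is why the lemma fixes $\calD_w = N(0,BB^\top)$: it ensures the process noise only ``sees'' the column space of $B$, so perturbing $C$ along a nearly-uncontrollable direction barely affects it.

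Given these two Frobenius bounds, the remainder is a transcription of Lemma~\ref{lem:observability-lowerbound}. For an arbitrary unit vector $\omega = (\omega_u, \omega_y)$ I would expand
\[
\omega^\top(\Sigma_{\calL,T} - \Sigma_{\calL',T})\omega = 2\omega_y^\top(P - P')\omega_u + \omega_y^\top(PP^\top - P'P'^\top)\omega_y + \omega_y^\top(RR^\top - R'R'^\top)\omega_y,
\]
bound each term by $O(T\norm{u}\delta)$ times one of $\norm{\omega_u}\norm{\omega_y}$, $\norm{\omega_y}\norm{P^\top\omega_y}$, $\norm{\omega_y}\norm{R^\top\omega_y}$ (plus lower-order terms), and observe that, by Claim~\ref{claim:covariance-formula}, $\omega^\top\Sigma_{\calL,T}\omega = \norm{\omega_u + P^\top\omega_y}^2 + \norm{R^\top\omega_y}^2 + \norm{\omega_y}^2$ dominates the right-hand side up to absolute constants, by the same case analysis as in Lemma~\ref{lem:observability-lowerbound}. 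This yields the multiplicative sandwich
\[
(1 - O(T\norm{u}\delta))\,\Sigma_{\calL,T} \preceq \Sigma_{\calL',T} \preceq (1 + O(T\norm{u}\delta))\,\Sigma_{\calL,T},
\]
and then Fact~\ref{fact:gaussian-distr} together with the standard TV bound between mean-zero Gaussians with multiplicatively close covariances gives $d_{\textsf{TV}}(\calD_{\calL,T},\calD_{\calL',T}) \leq O(T^2(m+p)\norm{u}\delta)$. There is no deep obstacle: the only nontrivial adaptation over the observability case is handling the extra moving block $R$, which the choice $\Sigma_w = BB^\top$ makes routine, and everything else is bookkeeping copied from Lemma~\ref{lem:observability-lowerbound}.
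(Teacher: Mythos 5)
Your proposal is correct and follows exactly the route the paper intends, since the paper's own proof of this lemma is just the remark that it is ``essentially the same'' as Lemma~\ref{lem:observability-lowerbound}. You have in fact supplied the one detail the paper glosses over --- that perturbing $C$ also moves the process-noise block $P_T(A,\Sigma_w^{1/2},C)$, which is controlled precisely because $\Sigma_w = BB^\top$ --- and the rest of your argument is the correct transcription.
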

\begin{proof}
The proof is essentially the same as the proof of Lemma~\ref{lem:observability-lowerbound}.
\end{proof}

Putting together Lemma~\ref{lem:observability-lowerbound} and Lemma~\ref{lem:controllability-lowerbound}, we can prove our full lower bound.  We need a minor assumption that $A$ is generic, in particular, we need that it is not too close to a multiple of the identity plus a rank-$1$ perturbation.  Essentially all matrices satisfy this assumption as long as $n \geq 3$. 

\begin{definition}
We say a matrix $A \in \R^{n \times n}$ and vector $v$ are $c$-generic if $\norm{A} \geq c$ and there are unit vectors $u,w$ such that 
\[
\begin{split}
\la u, v \ra = 0 \\
\la u , w \ra = 0 \\
\la u , Aw \ra \geq c \norm{A} \,.
\end{split}
\]
\end{definition}

\begin{theorem}\label{thm:lower-bound}
Let $A,C$ be matrices that are $(\delta,v)$-unobservable for some $0 < \delta  < 0.1$ and unit vector $v$.  Assume that $(A,v)$ are $c$-generic for some constant $c$.  Then any algorithm that is given an LDS $\calL(A,B,C,D)$ that uses at most 
\[
o\left( \frac{1}{\sqrt{\delta (m + p) } } \right)
\]
samples has probability at least $0.4$ of outputting $\hat{A},\hat{B},\hat{C},\hat{D}$ such that there is no invertible matrix $U$ with
\[
\norm{A - U^{-1}AU}_F, \norm{B - U^{-1} B}_F , \norm{C - CU}_F \leq 0.1c^2 \,.
\]
Similarly, the same holds if $A,B$ are matrices that are $(\delta,v)$-uncontrollable.
\end{theorem}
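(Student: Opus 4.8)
The plan is a two-point (Le Cam) argument built on Lemma~\ref{lem:observability-lowerbound}: I would exhibit two linear dynamical systems sharing the given $A,C$ that are statistically indistinguishable from a short trajectory yet whose parameters are far apart even after an arbitrary similarity transform, with $c$-genericity being exactly what enforces the latter. Concretely, set $D = 0$, $x_0 = 0$, $\calD_u = \calD_z = N(0,I)$ and $\calD_w = N(0,I)$, take input dimension $p = n$, and let $u,w$ be the unit vectors from $c$-genericity, so $\langle u,v\rangle = \langle u,w\rangle = 0$ and $\langle u,Aw\rangle \geq c\|A\|$. Put $B_0 = I$ and $B_1 = I + vu^\top$, and define $\calL_0 = \calL(A,B_0,C,0)$ and $\calL_1 = \calL(A,B_1,C,0)$. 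Both are $(\delta,v)$-unobservable and $c$-generic (these depend only on $A,C$ and $A,v$), both are controllable since their first controllability blocks $B_0,B_1$ are invertible, and $\Sigma_w = I = B_0B_0^\top$, so the hypotheses of Lemma~\ref{lem:observability-lowerbound} hold.

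First I would apply Lemma~\ref{lem:observability-lowerbound} with its free vector equal to this $u$: since $\|u\| = 1$, it gives $d_{\textsf{TV}}(\calD_{\calL_0,T},\calD_{\calL_1,T}) \leq O(T^2(m+p)\delta)$ whenever $T^2(m+p) \leq 1/(100\delta)$. Hence for a trajectory of length $T = o(1/\sqrt{\delta(m+p)})$ this total variation distance is $o(1)$, in particular below $1/5$.

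Next I would show that no output tuple $(\hat A,\hat B,\hat C,\hat D)$ can be $\epsilon_0$-close up to similarity to \emph{both} systems, where $\epsilon_0 = 0.1c^2$; that is, the events $G_b = \{\exists\text{ invertible }U_b : \|A - U_b^{-1}\hat A U_b\|, \|B_b - U_b^{-1}\hat B\|, \|C - \hat C U_b\| \leq \epsilon_0\}$ for $b \in \{0,1\}$ are disjoint as properties of the output. Suppose $G_0$ and $G_1$ both held, witnessed by $U_0,U_1$, and set $R = U_0^{-1}U_1$. Writing $U_0^{-1}\hat A U_0 = A + E_0$ and $U_1^{-1}\hat A U_1 = A + E_1$ with $\|E_b\|\leq\epsilon_0$, the identity $R^{-1}(A+E_0) = (A+E_1)R^{-1}$ gives $\|AR^{-1} - R^{-1}A\| \leq 2\epsilon_0\|R^{-1}\|$. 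From $R^{-1}(U_0^{-1}\hat B) = U_1^{-1}\hat B$ together with $B_0 = I$ and $B_1 = I + vu^\top$, I get $\|R^{-1} - I - vu^\top\| \leq (1+\|R^{-1}\|)\epsilon_0$, which rearranges to $\|R^{-1}\| = O(1)$ and hence $R^{-1} = I + vu^\top + \Delta$ with $\|\Delta\| = O(\epsilon_0)$. Therefore
\[
\big\| A(vu^\top) - (vu^\top)A \big\| \;\leq\; \|AR^{-1} - R^{-1}A\| + \|A\Delta - \Delta A\| \;=\; O(\epsilon_0) + O(\epsilon_0\|A\|) .
\]
But applying $A(vu^\top) - (vu^\top)A$ to the unit vector $w$ and using $\langle u,w\rangle = 0$ yields exactly $-\langle u,Aw\rangle v$, whose norm is at least $c\|A\|$. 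Since $\epsilon_0 = 0.1c^2$ and, by monotonicity of genericity, $c$ may be assumed below an absolute constant, the right-hand side above is strictly less than $c\|A\|$ — a contradiction, so $G_0$ and $G_1$ are disjoint.

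Finally I would combine the two steps: if an algorithm were, for each of $\calL_0$ and $\calL_1$, correct up to similarity at accuracy $\epsilon_0$ with probability exceeding $3/5$, then on a trajectory drawn from $\calL_1$ it would realize $G_1$ with probability $> 3/5$, and by the first step realize $G_0$ with probability $> 3/5 - 1/5 = 2/5$ — impossible since $G_0 \cap G_1 = \emptyset$. Hence on at least one of $\calL_0,\calL_1$ the algorithm errs with probability at least $2/5 = 0.4$, which is exactly the conclusion of Theorem~\ref{thm:lower-bound}. The $(\delta,v)$-uncontrollable case is identical, using Lemma~\ref{lem:controllability-lowerbound} with the perturbation $C \mapsto C + uv^\top$ and the symmetric commutator estimate.

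The main obstacle is the middle step. A priori the similarity transforms $U_0,U_1$ are unconstrained and could each be as ill-conditioned as $1/\delta$ (the observability matrix is nearly singular by hypothesis), so one cannot naively push $\epsilon_0$-approximations through conjugation by $U_b$. What makes it work is that only the relative transform $R = U_0^{-1}U_1$ ever appears, and the $B$-closeness constraints — which carry no dependence on $A$ or $\delta$ — together with the choice $B_0 = I$ pin $R^{-1}$ to within $O(\epsilon_0)$ of the fixed, well-conditioned matrix $I + vu^\top$. The remaining effort is the constant bookkeeping needed to make the commutator bound beat the genericity lower bound $c\|A\|$, which is where the precise numerology of the $0.1c^2$ accuracy enters.
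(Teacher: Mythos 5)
Your proposal is correct and follows essentially the same route as the paper: the identical two-point construction $B\mapsto B+vu^\top$ with $u$ supplied by $c$-genericity, indistinguishability via Lemma~\ref{lem:observability-lowerbound}, and a commutator-plus-genericity argument showing the two systems are not equivalent up to similarity at accuracy $0.1c^2$. Your treatment via $R=U_0^{-1}U_1$ merely makes explicit the reduction (which the paper leaves implicit) from ``the output is close to both systems under possibly different transforms'' to ``the two systems are close to each other under a single transform,'' and the constant bookkeeping is loose in the same places and to the same degree as in the paper's own proof.
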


\begin{proof}
Consider $\calL(A,B,C,D)$ where we set $p = n$ and $B = I_n, D = 0$.  Now choose $u,w$ to be unit vectors such that $\la u , v \ra = 0$ and $\la u,w \ra = 0$, $\la u, Aw \ra \geq c\norm{A}$ which exist by the assumption that $(A,v)$ are $c$-generic.  

Construct the the alternate LDS in Lemma~\ref{lem:observability-lowerbound} with the above setting of $u$.  We claim that the parameters of this alternate LDS are not close to  $\calL(A,B,C,D)$ up to any similarity transformation.  To see this, note that we must have 
\[
\norm{I + vu^\top - U^{-1}}_F \leq 0.1c \,.
\]
Also, the inverse of $I + vu^\top$ is $I - vu^\top$ and these are both well-conditioned.  Thus,
\[
\norm{I - vu^\top - U}_F \leq 0.2c \,.
\]
But now, we must have
\[
\begin{split}
&\norm{U^{-1}AU - A}_F  \\ &\geq -\norm{(I + vu^\top - U^{-1})AU}_F  - \norm{(I + vu^\top)A(I - vu^\top - U)}_F + \norm{vu^\top A + Avu^\top - vu^\top A vu^\top }_F 
\\ &\geq -0.5c \norm{A}  + \norm{vu^\top A w } \geq 0.5 c^2 \,.  
\end{split}
\]
On the other hand, by Lemma~\ref{lem:observability-lowerbound}, no algorithm can distinguish between $\calL(A,B,C,D)$ and $\calL(A,B + vu^\top, C,D)$ with better than $0.01$ advantage given 
\[
o\left( \frac{1}{\sqrt{\delta (m + p) } } \right)
\]
samples so thus with $0.4$ probability, the algorithm outputs a bad estimate of $(A,B,C,D)$.  The argument when the system is not controllable is similar, using Lemma~\ref{lem:controllability-lowerbound}.
\end{proof}

\bibliographystyle{alpha}
\bibliography{local,scholar}

\newcommand{\etalchar}[1]{$^{#1}$}
\begin{thebibliography}{CKMY22b}

\bibitem[{\AA}E71]{aastrom1971system}
Karl~Johan {\AA}str{\"o}m and Peter Eykhoff.
\newblock System identification—a survey.
\newblock {\em Automatica}, 7(2):123--162, 1971.

\bibitem[Ath74]{athans1974importance}
Michael Athans.
\newblock The importance of kalman filtering methods for economic systems.
\newblock In {\em Annals of Economic and Social Measurement, Volume 3, number
  1}, pages 49--64. NBER, 1974.

\bibitem[BK20a]{bakshi2020list}
Ainesh Bakshi and Pravesh Kothari.
\newblock List-decodable subspace recovery via sum-of-squares.
\newblock {\em arXiv preprint arXiv:2002.05139}, 2020.

\bibitem[BK20b]{bakshi2020outlier}
Ainesh Bakshi and Pravesh Kothari.
\newblock Outlier-robust clustering of non-spherical mixtures.
\newblock {\em arXiv preprint arXiv:2005.02970}, 2020.

\bibitem[BP21]{bakshi2021robust}
Ainesh Bakshi and Adarsh Prasad.
\newblock Robust linear regression: Optimal rates in polynomial time.
\newblock In {\em Proceedings of the 53rd Annual ACM SIGACT Symposium on Theory
  of Computing}, pages 102--115, 2021.

\bibitem[Bre15]{bresler2015efficiently}
Guy Bresler.
\newblock Efficiently learning ising models on arbitrary graphs.
\newblock In {\em Proceedings of the forty-seventh annual ACM symposium on
  Theory of computing}, pages 771--782, 2015.

\bibitem[CAT{\etalchar{+}}20]{cherapanamjeri2020optimal}
Yeshwanth Cherapanamjeri, Efe Aras, Nilesh Tripuraneni, Michael~I Jordan,
  Nicolas Flammarion, and Peter~L Bartlett.
\newblock Optimal robust linear regression in nearly linear time.
\newblock {\em arXiv preprint arXiv:2007.08137}, 2020.

\bibitem[CFG14]{candes2014towards}
Emmanuel~J Cand{\`e}s and Carlos Fernandez-Granda.
\newblock Towards a mathematical theory of super-resolution.
\newblock {\em Communications on pure and applied Mathematics}, 67(6):906--956,
  2014.

\bibitem[CHK{\etalchar{+}}20]{cherapanamjeri2020algorithms}
Yeshwanth Cherapanamjeri, Samuel~B Hopkins, Tarun Kathuria, Prasad Raghavendra,
  and Nilesh Tripuraneni.
\newblock Algorithms for heavy-tailed statistics: Regression, covariance
  estimation, and beyond.
\newblock In {\em Proceedings of the 52nd Annual ACM SIGACT Symposium on Theory
  of Computing}, pages 601--609, 2020.

\bibitem[CKMY22a]{chen2022kalman}
Sitan Chen, Frederic Koehler, Ankur Moitra, and Morris Yau.
\newblock Kalman filtering with adversarial corruptions.
\newblock In {\em Proceedings of the 54th Annual ACM SIGACT Symposium on Theory
  of Computing}, pages 832--845, 2022.

\bibitem[CKMY22b]{chen2022online}
Sitan Chen, Frederic Koehler, Ankur Moitra, and Morris Yau.
\newblock Online and distribution-free robustness: Regression and contextual
  bandits with huber contamination.
\newblock In {\em 2021 IEEE 62nd Annual Symposium on Foundations of Computer
  Science (FOCS)}, pages 684--695. IEEE, 2022.

\bibitem[CMY20]{cherapanamjeri2020list}
Yeshwanth Cherapanamjeri, Sidhanth Mohanty, and Morris Yau.
\newblock List decodable mean estimation in nearly linear time.
\newblock {\em arXiv preprint arXiv:2005.09796}, 2020.

\bibitem[DCWS03]{doretto2003dynamic}
Gianfranco Doretto, Alessandro Chiuso, Ying~Nian Wu, and Stefano Soatto.
\newblock Dynamic textures.
\newblock {\em International Journal of Computer Vision}, 51(2):91--109, 2003.

\bibitem[DHKK20]{diakonikolas2020robustly}
Ilias Diakonikolas, Samuel~B Hopkins, Daniel Kane, and Sushrut Karmalkar.
\newblock Robustly learning any clusterable mixture of gaussians.
\newblock {\em arXiv preprint arXiv:2005.06417}, 2020.

\bibitem[Din13]{ding2013}
Feng Ding.
\newblock Two-stage least squares based iterative estimation algorithm for
  cararma system modeling.
\newblock {\em Applied Mathematical Modelling}, 37(7):4798--4808, 2013.

\bibitem[DM22]{DM2022}
Boualem Djehiche and Othmane Mazhar.
\newblock Efficient learning of hidden state lti state space models of unknown
  order, 2022.

\bibitem[DMM{\etalchar{+}}20]{dean2020sample}
Sarah Dean, Horia Mania, Nikolai Matni, Benjamin Recht, and Stephen Tu.
\newblock On the sample complexity of the linear quadratic regulator.
\newblock {\em Foundations of Computational Mathematics}, 20(4):633--679, 2020.

\bibitem[Don92]{donoho1992superresolution}
David~L Donoho.
\newblock Superresolution via sparsity constraints.
\newblock {\em SIAM journal on mathematical analysis}, 23(5):1309--1331, 1992.

\bibitem[FA75]{fetzer1975observability}
Erwin~Enrique Fetzer and PM~Anderson.
\newblock Observability in the state estimation of power systems.
\newblock {\em IEEE transactions on power Apparatus and Systems},
  94(6):1981--1988, 1975.

\bibitem[Fat20]{salar2020}
Salar Fattahi.
\newblock Learning partially observed linear dynamical systems from logarithmic
  number of samples.
\newblock {\em CoRR}, abs/2010.04015, 2020.

\bibitem[FTM17]{faradonbeh2017}
Mohamad Kazem~Shirani Faradonbeh, Ambuj Tewari, and George Michailidis.
\newblock Finite time identification in unstable linear systems.
\newblock {\em CoRR}, abs/1710.01852, 2017.

\bibitem[FTM18]{faradonbeh2018finite}
Mohamad Kazem~Shirani Faradonbeh, Ambuj Tewari, and George Michailidis.
\newblock Finite time identification in unstable linear systems.
\newblock {\em Automatica}, 96:342--353, 2018.

\bibitem[GA10]{grewal2010applications}
Mohinder~S Grewal and Angus~P Andrews.
\newblock Applications of kalman filtering in aerospace 1960 to the present
  [historical perspectives].
\newblock {\em IEEE Control Systems Magazine}, 30(3):69--78, 2010.

\bibitem[Gal16]{Galrinho2016LeastSM}
Miguel Galrinho.
\newblock Least squares methods for system identification of structured models.
\newblock 2016.

\bibitem[GH96]{ghahramani1996parameter}
Zoubin Ghahramani and Geoffrey~E Hinton.
\newblock Parameter estimation for linear dynamical systems.
\newblock 1996.

\bibitem[GLS{\etalchar{+}}20]{ghai2020}
Udaya Ghai, Holden Lee, Karan Singh, Cyril Zhang, and Yi~Zhang.
\newblock No-regret prediction in marginally stable systems.
\newblock {\em CoRR}, abs/2002.02064, 2020.

\bibitem[HK66]{ho1966effective}
BL~HO and Rudolf~E K{\'a}lm{\'a}n.
\newblock Effective construction of linear state-variable models from
  input/output functions.
\newblock {\em at-Automatisierungstechnik}, 14(1-12):545--548, 1966.

\bibitem[HL18]{hopkins2018mixture}
Samuel~B Hopkins and Jerry Li.
\newblock Mixture models, robustness, and sum of squares proofs.
\newblock In {\em Proceedings of the 50th Annual ACM SIGACT Symposium on Theory
  of Computing}, pages 1021--1034, 2018.

\bibitem[HLS{\etalchar{+}}18]{hazan2018spectral}
Elad Hazan, Holden Lee, Karan Singh, Cyril Zhang, and Yi~Zhang.
\newblock Spectral filtering for general linear dynamical systems.
\newblock {\em Advances in Neural Information Processing Systems}, 31, 2018.

\bibitem[HMR18]{hardt2018gradient}
Moritz Hardt, Tengyu Ma, and Benjamin Recht.
\newblock Gradient descent learns linear dynamical systems.
\newblock {\em Journal of Machine Learning Research}, 19:1--44, 2018.

\bibitem[Hop18]{hopkins2018sub}
Samuel~B Hopkins.
\newblock Sub-gaussian mean estimation in polynomial time.
\newblock {\em arXiv preprint arXiv:1809.07425}, 2018.

\bibitem[HSZ17]{hazan2017}
Elad Hazan, Karan Singh, and Cyril Zhang.
\newblock Learning linear dynamical systems via spectral filtering.
\newblock {\em CoRR}, abs/1711.00946, 2017.

\bibitem[IK22]{ivkov2022list}
Misha Ivkov and Pravesh~K Kothari.
\newblock List-decodable covariance estimation.
\newblock In {\em Proceedings of the 54th Annual ACM SIGACT Symposium on Theory
  of Computing}, pages 1276--1283, 2022.

\bibitem[JLST21]{jambulapati2021robust}
Arun Jambulapati, Jerry Li, Tselil Schramm, and Kevin Tian.
\newblock Robust regression revisited: Acceleration and improved estimation
  rates.
\newblock {\em Advances in Neural Information Processing Systems},
  34:4475--4488, 2021.

\bibitem[Kal60a]{kalman1960general}
Rudolf~E Kalman.
\newblock On the general theory of control systems.
\newblock In {\em Proceedings First International Conference on Automatic
  Control, Moscow, USSR}, pages 481--492, 1960.

\bibitem[Kal60b]{kalman1960new}
Rudolph~Emil Kalman.
\newblock A new approach to linear filtering and prediction problems.
\newblock 1960.

\bibitem[KKK19]{karmalkar2019list}
Sushrut Karmalkar, Adam Klivans, and Pravesh Kothari.
\newblock List-decodable linear regression.
\newblock In {\em Advances in Neural Information Processing Systems}, pages
  7423--7432, 2019.

\bibitem[KKM18]{klivans2018efficient}
Adam Klivans, Pravesh~K Kothari, and Raghu Meka.
\newblock Efficient algorithms for outlier-robust regression.
\newblock {\em arXiv preprint arXiv:1803.03241}, 2018.

\bibitem[KKMM20]{kelner2020learning}
Jonathan Kelner, Frederic Koehler, Raghu Meka, and Ankur Moitra.
\newblock Learning some popular gaussian graphical models without condition
  number bounds.
\newblock {\em Advances in Neural Information Processing Systems},
  33:10986--10998, 2020.

\bibitem[KS17]{kothari2017outlier}
Pravesh~K Kothari and David Steurer.
\newblock Outlier-robust moment-estimation via sum-of-squares.
\newblock {\em arXiv preprint arXiv:1711.11581}, 2017.

\bibitem[KSS18]{kothari2018robust}
Pravesh~K Kothari, Jacob Steinhardt, and David Steurer.
\newblock Robust moment estimation and improved clustering via sum of squares.
\newblock In {\em Proceedings of the 50th Annual ACM SIGACT Symposium on Theory
  of Computing}, pages 1035--1046. ACM, 2018.

\bibitem[LAHA20]{LAHA2020}
Sahin Lale, Kamyar Azizzadenesheli, Babak Hassibi, and Anima Anandkumar.
\newblock Regret bound of adaptive control in linear quadratic gaussian {(LQG)}
  systems.
\newblock {\em CoRR}, abs/2003.05999, 2020.

\bibitem[Lee20]{lee2020}
Holden Lee.
\newblock Improved rates for identification of partially observed linear
  dynamical systems.
\newblock {\em CoRR}, abs/2011.10006, 2020.

\bibitem[Lju98]{ljung1998system}
Lennart Ljung.
\newblock System identification.
\newblock In {\em Signal analysis and prediction}, pages 163--173. Springer,
  1998.

\bibitem[LL19]{LL2019}
Bruce Lee and Andrew Lamperski.
\newblock Non-asymptotic closed-loop system identification using autoregressive
  processes and hankel model reduction, 2019.

\bibitem[LM19]{lugosi2019sub}
G{\'a}bor Lugosi and Shahar Mendelson.
\newblock Sub-gaussian estimators of the mean of a random vector.
\newblock {\em The annals of statistics}, 47(2):783--794, 2019.

\bibitem[LMC07]{li2007robust}
Qiao Li, Roger~G Mark, and Gari~D Clifford.
\newblock Robust heart rate estimation from multiple asynchronous noisy sources
  using signal quality indices and a kalman filter.
\newblock {\em Physiological measurement}, 29(1):15, 2007.

\bibitem[MB07]{mesot2007switching}
Bertrand Mesot and David Barber.
\newblock Switching linear dynamical systems for noise robust speech
  recognition.
\newblock {\em IEEE Transactions on Audio, Speech, and Language Processing},
  15(6):1850--1858, 2007.

\bibitem[Moi15]{moitra2015super}
Ankur Moitra.
\newblock Super-resolution, extremal functions and the condition number of
  vandermonde matrices.
\newblock In {\em Proceedings of the forty-seventh annual ACM symposium on
  Theory of computing}, pages 821--830, 2015.

\bibitem[MW72]{muller1972analysis}
PC~M{\"u}ller and HI~Weber.
\newblock Analysis and optimization of certain qualities of controllability and
  observability for linear dynamical systems.
\newblock {\em Automatica}, 8(3):237--246, 1972.

\bibitem[OO19]{oymak2019non}
Samet Oymak and Necmiye Ozay.
\newblock Non-asymptotic identification of lti systems from a single
  trajectory.
\newblock In {\em 2019 American control conference (ACC)}, pages 5655--5661.
  IEEE, 2019.

\bibitem[PJL20]{pensia2020robust}
Ankit Pensia, Varun Jog, and Po-Ling Loh.
\newblock Robust regression with covariate filtering: Heavy tails and
  adversarial contamination.
\newblock {\em arXiv preprint arXiv:2009.12976}, 2020.

\bibitem[PSBR20]{prasad2018robust}
Adarsh Prasad, Arun~Sai Suggala, Sivaraman Balakrishnan, and Pradeep Ravikumar.
\newblock Robust estimation via robust gradient estimation.
\newblock {\em Journal of the Royal Statistical Society: Series B (Statistical
  Methodology)}, 82(3):601--627, 2020.

\bibitem[RJR20]{RJR2020}
Paria Rashidinejad, Jiantao Jiao, and Stuart Russell.
\newblock Slip: Learning to predict in unknown dynamical systems with long-term
  memory.
\newblock In H.~Larochelle, M.~Ranzato, R.~Hadsell, M.F. Balcan, and H.~Lin,
  editors, {\em Advances in Neural Information Processing Systems}, volume~33,
  pages 5716--5728. Curran Associates, Inc., 2020.

\bibitem[Row02]{rowell2002state}
Derek Rowell.
\newblock State-space representation of lti systems.
\newblock {\em URL: http://web. mit. edu/2.14/www/Handouts/StateSpace. pdf},
  pages 1--18, 2002.

\bibitem[RY20a]{raghavendra2020regression}
Prasad Raghavendra and Morris Yau.
\newblock List decodable learning via sum of squares.
\newblock In {\em Proceedings of the Fourteenth Annual ACM-SIAM Symposium on
  Discrete Algorithms}, pages 161--180. SIAM, 2020.

\bibitem[RY20b]{raghavendra2020list}
Prasad Raghavendra and Morris Yau.
\newblock List decodable subspace recovery.
\newblock In {\em Conference on Learning Theory}, pages 3206--3226. PMLR, 2020.

\bibitem[SBR19]{simchowitz2019learning}
Max Simchowitz, Ross Boczar, and Benjamin Recht.
\newblock Learning linear dynamical systems with semi-parametric least squares.
\newblock In {\em Conference on Learning Theory}, pages 2714--2802. PMLR, 2019.

\bibitem[SBTR12]{shah2012linear}
Parikshit Shah, Badri~Narayan Bhaskar, Gongguo Tang, and Benjamin Recht.
\newblock Linear system identification via atomic norm regularization.
\newblock In {\em 2012 IEEE 51st IEEE conference on decision and control
  (CDC)}, pages 6265--6270. IEEE, 2012.

\bibitem[Sch09]{schiff2009kalman}
Steven~J Schiff.
\newblock Kalman meets neuron: the emerging intersection of control theory with
  neuroscience.
\newblock In {\em 2009 annual international conference of the IEEE engineering
  in medicine and biology society}, pages 3318--3321. IEEE, 2009.

\bibitem[SMT{\etalchar{+}}18a]{simchowitz2018learning}
Max Simchowitz, Horia Mania, Stephen Tu, Michael~I Jordan, and Benjamin Recht.
\newblock Learning without mixing: Towards a sharp analysis of linear system
  identification.
\newblock In {\em Conference On Learning Theory}, pages 439--473. PMLR, 2018.

\bibitem[SMT{\etalchar{+}}18b]{simchowitz2018}
Max Simchowitz, Horia Mania, Stephen Tu, Michael~I. Jordan, and Benjamin Recht.
\newblock Learning without mixing: Towards {A} sharp analysis of linear system
  identification.
\newblock {\em CoRR}, abs/1802.08334, 2018.

\bibitem[SOF22]{SOF2022}
Yue Sun, Samet Oymak, and Maryam Fazel.
\newblock System identification via nuclear norm regularization, 2022.

\bibitem[SPL05]{spinelli2005}
W.~Spinelli, L.~Piroddi, and M.~Lovera.
\newblock On the role of prefiltering in nonlinear system identification.
\newblock {\em IEEE Transactions on Automatic Control}, 50(10):1597--1602,
  2005.

\bibitem[SR19]{sarkar2019near}
Tuhin Sarkar and Alexander Rakhlin.
\newblock Near optimal finite time identification of arbitrary linear dynamical
  systems.
\newblock In {\em International Conference on Machine Learning}, pages
  5610--5618. PMLR, 2019.

\bibitem[SRD19]{sarkar2019nonparametric}
Tuhin Sarkar, Alexander Rakhlin, and Munther~A Dahleh.
\newblock Nonparametric finite time lti system identification.
\newblock {\em arXiv preprint arXiv:1902.01848}, 2019.

\bibitem[SRD22]{SRD2021}
Tuhin Sarkar, Alexander Rakhlin, and Munther~A. Dahleh.
\newblock Finite time lti system identification.
\newblock {\em J. Mach. Learn. Res.}, 22(1), jul 2022.

\bibitem[TP19]{tsiamis2019finite}
Anastasios Tsiamis and George~J Pappas.
\newblock Finite sample analysis of stochastic system identification.
\newblock In {\em 2019 IEEE 58th Conference on Decision and Control (CDC)},
  pages 3648--3654. IEEE, 2019.

\bibitem[TZMP22]{TZMP2022}
Anastasios Tsiamis, Ingvar Ziemann, Nikolai Matni, and George~J. Pappas.
\newblock Statistical learning theory for control: A finite sample perspective,
  2022.

\bibitem[Zha11]{zhang2011}
Yong Zhang.
\newblock Unbiased identification of a class of multi-input single-output
  systems with correlated disturbances using bias compensation methods.
\newblock {\em Mathematical and Computer Modelling}, 53(9):1810--1819, 2011.

\bibitem[ZJS20]{zhu2020robust}
Banghua Zhu, Jiantao Jiao, and Jacob Steinhardt.
\newblock Robust estimation via generalized quasi-gradients.
\newblock {\em arXiv preprint arXiv:2005.14073}, 2020.

\bibitem[ZL21]{zl2021}
Yang Zheng and Na~Li.
\newblock Non-asymptotic identification of linear dynamical systems using
  multiple trajectories.
\newblock {\em {IEEE} Control Systems Letters}, 5(5):1693--1698, nov 2021.

\end{thebibliography}

\appendix
\section{Variance Diverges for Unstabilized Estimator}

While $y_{t+j}u_t^\top$ is an unbiased estimator for the Markov parameters, here we show a very simple example where the variance is too large so that the empirical estimate of $\EE[y_{t+j}u_t^\top]$ actually has very bad accuracy no matter how many observations we get.

\begin{lemma}\label{lem:variance-blowup}
Consider an LDS where $m = n = p = 1$, $A = B = C = D = 1$ and $u_t,z_t$ are  drawn from $N(0,1)$ and $w_t$ is drawn from $N(0, 100)$ and $x_0 = 0$.  Then for any time window $T$, 
\[
\EE\left[ \frac{1}{T}\sum_{t = 1}^T y_tu_t - 1 \right] \geq 20 \,.
\]
\end{lemma}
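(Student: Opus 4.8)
Read the stated bound in the natural way, as a lower bound on the mean squared error (equivalently the variance) of the empirical estimator: $\EE\!\left[\left(\frac1T\sum_{t=1}^T y_tu_t-1\right)^2\right]\ge 20$. Indeed, by Fact~\ref{fact:formula} we have $\EE[y_tu_t]=D=1$, so the quantity inside the square has mean zero and the content of the lemma is that its fluctuations do not shrink as $T$ grows.

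The plan is to expand $y_t$ using Fact~\ref{fact:formula}. With $A=B=C=D=1$ and $x_0=0$ this is $y_t = u_t + z_t + \sum_{i=0}^{t-1}(u_i+w_i)$, so
\[
y_tu_t - 1 = (u_t^2-1) + z_tu_t + u_t\!\sum_{i=0}^{t-1}u_i + u_t\!\sum_{i=0}^{t-1}w_i .
\]
Summing over $t\in[T]$ and, in the last term, swapping the order of summation, write $\sum_{t=1}^T(y_tu_t-1) = A_1+A_2+A_3+A_4$ with $A_1=\sum_{t}(u_t^2-1)$, $A_2=\sum_t z_tu_t$, $A_3=\sum_t u_t\sum_{i<t}u_i$, and $A_4=\sum_{i=0}^{T-1} w_i\big(\sum_{t=i+1}^T u_t\big)$.

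The key step is to notice that $A_4$ --- the only piece involving the large-variance process noise --- is uncorrelated with the other three. Since the $w_i$ are mean zero and independent of each other and of all $u_i,z_i$, we get $\EE[A_4\mid \{u_i\}_i,\{z_i\}_i]=0$, hence $\EE[A_4A_1]=\EE[A_4A_2]=\EE[A_4A_3]=0$. Therefore $\EE\big[(\sum_t(y_tu_t-1))^2\big] = \EE[(A_1+A_2+A_3)^2] + \EE[A_4^2] \ge \EE[A_4^2]$, and it remains only to lower bound $\EE[A_4^2]$. Expanding and again using that distinct $w_i$ are independent mean-zero (and independent of the $u$'s), the cross terms vanish and $\EE[A_4^2] = \sum_{i=0}^{T-1}\EE[w_i^2]\,\EE\big[(\sum_{t=i+1}^T u_t)^2\big] = \sum_{i=0}^{T-1} 100(T-i) = 50\,T(T+1)$. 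Dividing by $T^2$ yields $\EE\!\left[\left(\frac1T\sum_{t=1}^T y_tu_t-1\right)^2\right] \ge 50\,\frac{T+1}{T} \ge 50 \ge 20$.

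There is no genuine obstacle here beyond bookkeeping; the one idea worth isolating is the orthogonality observation, which lets us bound the variance from below by just the process-noise contribution, avoiding any need to compute (or even upper bound) the $u$- and $z$-dependent terms. The conceptual content is that the unnormalized sum has variance $\Theta(T^2)$, not $\Theta(T)$, because each $w_i$ perturbs every later observation $y_t$; hence averaging over a length-$T$ trajectory does not drive the error to zero, which is precisely why a stabilizing transformation of the observations is needed.
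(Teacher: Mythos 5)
Your proof is correct and follows essentially the same route as the paper's: expand $y_t$ via the recursion, isolate the $\sum_i w_i\sum_{t>i}u_t$ contribution, use independence to kill cross terms, and observe that this term alone contributes variance $\Theta(T^2)$. You correctly read the missing square in the lemma statement as a typo, and your count $\EE[A_4^2]=50\,T(T+1)$ is in fact slightly more careful than the paper's $50\,T(T-1)$ (which degenerates at $T=1$).
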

\begin{proof}
Expanding out the recursion in the definition of an LDS, we have
\[
y_t = z_t + u_t +  (u_{t-1} + w_{t-1}) + \dots + (u_0 + w_0) \,.
\]
Define the random variable $Q_T = \frac{1}{T}\sum_{t = 1}^T y_tu_t - 1$.  We can write
\[
Q_T = \frac{1}{T} \left( \sum_{t = 1}^T (u_t^2 - 1) + \sum_{t = 1}^T z_tu_t + \sum_{0 \leq t_1 < t_2 \leq T } (u_{t_1}u_{t_2} + w_{t_1}u_{t_2}) \right) \,.
\]
Now using independence, we have
\[
\EE[Q_T^2] \geq  \frac{1}{T^2}\left(50T(T-1)\right) \geq 20 \,,
\]
and this completes the proof.
\end{proof}

\end{document}